\newtheorem{proposition}{Proposition}
\newtheorem{definition}{Definition} 
\newtheorem{example}{Example} 
\newtheorem{remark}{Remark} 
\newtheorem{theorem}{Theorem}
\newtheorem{corollary}{Corollary} 
\newcommand{\interp}[1] {\left\llbracket #1 \right\rrbracket}
\newcommand{\annoted}[3]{\overbrace{#3}^{#2}\left.\vphantom{#3}\right\rbrace{\scriptstyle #1}}
\title{Strong shift equivalence as a category notion}
\tikzstyle{none}=[]
\tikzstyle{default_label}=[label position=right]
\tikzstyle{monoid}=[inner sep=0.5pt,minimum width=.3cm,minimum height=.3cm,draw=black,shape=diamond,fill=white,font=\footnotesize]
\tikzstyle{comonoid}=[inner sep=0.5pt,minimum width=.3cm,minimum height=.3cm,draw=black,shape=diamond,fill=white,font=\footnotesize]
\tikzstyle{bmonoid}=[inner sep=0.5pt,minimum width=.3cm,minimum height=.3cm,draw=black,shape=diamond,fill=black,font=\footnotesize]
\tikzstyle{bcomonoid}=[inner sep=0.5pt,minimum width=.3cm,minimum height=.3cm,draw=black,shape=diamond,fill=black,font=\footnotesize]
\tikzstyle{anymonoid}=[inner sep=0.5pt,minimum width=.3cm,minimum height=.3cm,draw=black,shape=diamond,font=\footnotesize,pattern=north west lines,pattern color=black,preaction={fill=white}]
\tikzstyle{anycomonoid}=[inner sep=0.5pt,minimum width=.3cm,minimum height=.3cm,draw=black,shape=diamond,font=\footnotesize,pattern=north west lines,pattern color=black,preaction={fill=white}]
\tikzstyle{hmonoid}=[inner sep=0.5pt,minimum width=.2cm,minimum height=.2cm,draw=black,shape=circle,fill=black,font=\footnotesize]
\tikzstyle{gmonoid}=[inner sep=0.5pt,minimum width=.2cm,minimum height=.2cm,draw=black,shape=circle,fill=white,font=\footnotesize]
\tikzstyle{square}=[shape=square,fill=white]
\makeatletter\pgfkeys{/tikz/stretch/.code 2 args=\tikz@stretch{#1}{#2}}\def\tikz@stretch#1#2{\pgfpointtransformed{\pgfpoint{0cm}{0cm}}\pgf@xa=\pgf@x\pgf@ya=\pgf@y\pgfpointtransformed{\pgfpoint{#1cm}{#2cm}}\advance\pgf@x by-\pgf@xa\ifdim\pgf@x<0pt\pgf@x=-\pgf@x\fi\advance\pgf@y by-\pgf@ya\ifdim\pgf@y<0pt\pgf@y=-\pgf@y\fi\pgfkeysalso{/tikz/minimum width/.expanded=\the\pgf@x}\pgfkeysalso{/tikz/minimum height/.expanded=\the\pgf@y}}\makeatother
\begin{document}
\author{Emmanuel Jeandel}
\address{Universit\'e de Lorraine, CNRS, Inria, LORIA, F 54000 Nancy, France}
\email{emmanuel.jeandel@loria.fr}

\begin{abstract}
  In this paper, we present a completely radical way to investigate the main problem of symbolic dynamics, the conjugacy problem, by proving that this problem actually relates to a natural question in category theory regarding the theory of traced bialgebras.

As a consequence of this theory, we obtain a systematic way of obtaining new invariants for the conjugacy problem by looking at existing bialgebras in the literature.
\end{abstract}
\maketitle

\section*{Introduction}

Possibly the most important question in symbolic dynamics \cite{LindMarcus} is the decidability of the conjugacy problem: decide if two symbolic dynamic systems (more precisely  subshifts of finite  type) are isomorphic \cite{BoyleOpen}.
Subshifts of finite type can be interpreted as biinfinite walks in a finite graph, or equivalently as biinfinite words over a finite alphabet forbidding a given finite set of words. In this article, we will be interested in the matrix representation, where a subshift of finite type is just a matrix of nonnegative integers (but the graph representation will be central to understand the results and the intuition). In this context, two matrices $M,N$ are \emph{strong shift equivalent} (represent isomorphic subshifts) if and only if $M \equiv N$ where $\equiv$ is the smallest equivalence relation s.t. $RS \equiv SR$ for all nonnegative, possibly nonsquare, matrices $R,S$.

The goal of this article is to investigate strong shift equivalence from the categorical point of view, using the concept of \emph{props}.
Props is a relatively old concept from MacLane's articles on categorical algebra, and is a natural formalism in which one can express easily circuits, i.e. diagrams made of boxes linked by wires. It has proven quite successful recently in categorical quantum mechanics, in particular with the introduction of the ZX-calculus \cite{coecke2011interacting}, a graphical language for quantum circuits (and more), and in linear algebra, by providing a new graphical way to reason about matrices \cite{bonchi2017interacting,Zanasi}. Similarities between these two languages are given in \cite{ZXZHetc}.

On the center of the representation of linear algebra by diagrams is the basic idea that the bialgebra prop is exactly the prop of nonnegative integer matrices, probably observed first in \cite{pirash}.
As symbolic dynamics is interested in a particular equivalence relation  on nonnegative integer matrices, this is a good start to think about them categorically. As the equation $RS \equiv SR$  reminds of the trace, it is therefore not surprising that there is a strong link between the traced bialgebra prop (matrices with a notion of traces) and strong shift equivalence.

The link is not completely immediate due to the fact that the classical notion of trace  in a category is a notion of a \emph{partial trace}.
We will prove in this article that actually the traced bialgebra prop does not technically correspond to strong shift equivalence, but to another well studied notion of equivalence of matrices, \emph{flow equivalence} \cite{parry75}.
To obtain a full categorical formulation of strong shift equivalence, we will need to consider bialgebras \emph{with a distinguished morphism}.

It is important to note that the link between flow equivalence and bicommutative bialgebras is not new, and has been investigated previously by David Hillman \cite{Hillman}, but for some reasons has not been investigated thoroughly since. 

\vspace{5mm}

This categorical manipulation of the concepts of symbolic dynamics is not just an intellectual game but serves some purpose, as it has actually the following consequence: Suppose one knows a category which contains a traced bialgebra (billions examples abound in the literature, the most prominent examples being finite dimensional Hopf algebras). Then one can interpret matrices in this category in such a way that two matrices that are strongly shift equivalent will have the same interpretation. This gives a new way to obtain \emph{invariants} for strong shift equivalence (i.e. functions from matrices to some set $S$ s.t. two matrices that are strong shift equivalents are equal by the function), with the added property that our approach is complete: there exist at least one category for which having the same interpretation is a necessary and sufficient condition for strong shift equivalence.

\vspace{5mm}

The article is organized as follows. In the first section, we recall the classical definitions of a prop, and gives in particular the graphical interpretation of the prop of matrices. In the second section, we introduce traced props, and show the main theorem: the traced completion of matrices with noninteger coefficients corresponds to matrices quotiented by flow equivalence, and the traced completion of matrices with coefficients in $\mathbb{Z}_+[t]$ corresponds to matrices quotiented by strong shift equivalence.

In the last section, we explain how we can recover classical and new invariants for symbolic dynamics by exploiting the idea of interpreting matrices in traced bialgebra props. This section is developed from the  point of view of someone who knowns no invariant of strong shift equivalence: we look at existing and well known bialgebras and see which invariant we get from them. The fact one obtains this way fairly well known invariants from symbolic dynamics acts as a proof that the whole approach is successful.

\vspace{5mm}

The perfect reader for this article is someone familiar both with category theory (esp. the categorical approach to universal algebra developed by Lawvere) and symbolic dynamics. It should be accessible however to a reader with only a passing familiarity in at least one of the two domains.

\section{prop and finitely presented props}

\subsection{props}
A prop\footnote{prop was originally an acronym from PROducts and Permutations} is one of the categorical ways to represent circuits. Morphisms in a prop (may) represent circuits: we have two composition laws on circuits: parallel ($\otimes$) and sequential ($\circ$) compositions that should satisfy some obvious properties.

\begin{definition}[prop]
\label{def:prop}  
                A \textbf{prop} $\textbf{P}$ is the data, for each pair $(n,m) \in \mathbb{N}^2$ of a set\footnote{Technically we only consider locally small props, i.e. props where morphisms form a set, but the distinction is irrelevant for most of the paper.}
                $\textbf{P}[n,m]$, called the set of morphisms. An
                element $f \in \textbf{P}[n,m]$  is usually written
                $f:n\to m$. These sets are linked by the following
                operators:\vspace{0.25cm}
                \begin{itemize}
                        \item A \textbf{composition} $\circ: \textbf{P}[b,c]\times \textbf{P}[a,b]\to \textbf{P}[a,c]$ satisfying: $(f\circ g)\circ h=f\circ (g\circ h)$.\vspace{0.25cm}
                        \item A \textbf{tensor product} $\otimes: \textbf{P}[a,b]\times \textbf{P}[c,d]\to \textbf{P}[a+c,b+d]$, satisfying: $(f\otimes g)\otimes h=f\otimes (g\otimes h)$ and $(f\circ g)\otimes (h\circ k)=(f\otimes h)\circ (g\otimes k)$.\vspace{0.25cm}
                        \item An \textbf{empty morphism} $1:0\to 0$
                          such that $f\otimes 1= 1\otimes f=f$ for all
                          $f:a\to b$.\vspace{0.25cm}
                        \item An \textbf{identity} $id: 1\to 1$ such that
                          $f\circ id^{\otimes a}=id^{\otimes b} \circ
                          f=f$ for all $f:a\to b$. With the convention $id^{\otimes 0}=1$.
\vspace{0.25cm}
                        \item A \textbf{symmetry} $\sigma: 2\to 2$
                          satisfying: $\sigma^2=id^{\otimes 2}$ and such that, $\sigma_{a}\circ (f\otimes id)=(id\otimes f)\circ \sigma_b$, for all $f:a\to b$, where $\sigma_{n+1}=(1^{\otimes n}\otimes \sigma)\circ (1\otimes \sigma_n)$, with $\sigma_0 = id$.
                \end{itemize}

A prop functor $F$ (i.e. a morphism of props) from $P$ to $Q$ is a set of maps, again denoted $F$,  from $P[n,m]$ to $Q[n,m]$ such that  $F(f  \circ g) = F(f)  \circ F(g), F(f \otimes g) = F(f) \otimes F(g)$, and $F(id_P) = id_Q, F(0_P) = 0_Q, F(\sigma_P) = \sigma_Q$.
                \end{definition}
        
In the language of categories \cite{MacLane}, a prop is a small strict symmetric monoidal category whose monoid of object is spanned by a unique object.

    Props admit a nice diagrammatical representation that gives a topological interpretation to the axioms \cite{selinger2010survey}. A morphism $f:n\to m$ is represented as a box with $n$ inputs and $m$ outputs, with inputs on the left and outputs on the right. Composition is represented by plugging the boxes, and the tensor product by drawing the boxes side by side. The identity is represented by a single wire, the empty morphism by an empty diagram and the symmetry by wire crossing:
          
        \begin{center}
                \begin{tabular}{ccccc}
\begin{tikzpicture}[baseline=(current bounding box.center)]
\begin{pgfonlayer}{nodelayer}
\node [] (0) at (-0.000,-0.250) {};
\node [] (1) at (-0.375,-0.250) {};
\node [fill=white,label={[label position=center]$\vdots$}] (2) at (-0.375,-0.500) {};
\node [] (3) at (-0.000,-0.750) {};
\node [] (4) at (-0.375,-0.750) {};
\node [shape=rectangle,fill=white,draw,label={[label position=center]$f$},inner xsep=0.1875cm,inner ysep=0.375cm] (5) at (-0.750,-0.500) {};
\node [] (6) at (-0.750,-0.250) {};
\node [] (7) at (-0.750,-0.750) {};
\node [] (8) at (-0.750,-0.250) {};
\node [] (9) at (-0.750,-0.750) {};
\node [] (10) at (-1.125,-0.250) {};
\node [] (11) at (-1.500,-0.250) {};
\node [fill=white,label={[label position=center]$\vdots$}] (12) at (-1.125,-0.500) {};
\node [] (13) at (-1.125,-0.750) {};
\node [] (14) at (-1.500,-0.750) {};
\end{pgfonlayer}
\begin{pgfonlayer}{edgelayer}
\draw[] (0.center) to[out=180, in=  0] (1.center);
\draw[] (3.center) to[out=180, in=  0] (4.center);
\draw[] (1.center) to[out=180, in=  0] (8.center);
\draw[] (4.center) to[out=180, in=  0] (9.center);
\draw[] (6.center) to[out=180, in=  0] (10.center);
\draw[] (10.center) to[out=180, in=  0] (11.center);
\draw[] (7.center) to[out=180, in=  0] (13.center);
\draw[] (13.center) to[out=180, in=  0] (14.center);
\end{pgfonlayer}
\end{tikzpicture}
 & \begin{tikzpicture}[baseline=(current bounding box.center)]
\begin{pgfonlayer}{nodelayer}
\node [] (0) at (-0.000,-0.250) {};
\node [] (1) at (-0.375,-0.250) {};
\node [] (2) at (-0.750,-0.250) {};
\end{pgfonlayer}
\begin{pgfonlayer}{edgelayer}
\draw[] (0.center) to[out=180, in=  0] (1.center);
\draw[] (1.center) to[out=180, in=  0] (2.center);
\end{pgfonlayer}
\end{tikzpicture}
 & \begin{tikzpicture}[baseline=(current bounding box.center)]
\begin{pgfonlayer}{nodelayer}
\node [] (0) at (-0.000,-0.250) {};
\node [] (1) at (-0.375,-0.250) {};
\node [fill=white,label={[label position=center]$\vdots$}] (2) at (-0.375,-0.500) {};
\node [] (3) at (-0.000,-0.750) {};
\node [] (4) at (-0.375,-0.750) {};
\node [shape=rectangle,fill=white,draw,label={[label position=center]$f$},inner xsep=0.1875cm,inner ysep=0.375cm] (5) at (-0.750,-0.500) {};
\node [] (6) at (-0.750,-0.250) {};
\node [] (7) at (-0.750,-0.750) {};
\node [] (8) at (-0.750,-0.250) {};
\node [] (9) at (-0.750,-0.750) {};
\node [] (10) at (-1.125,-0.250) {};
\node [fill=white,label={[label position=center]$\vdots$}] (11) at (-1.125,-0.500) {};
\node [] (12) at (-1.125,-0.750) {};
\node [shape=rectangle,fill=white,draw,label={[label position=center]$g$},inner xsep=0.1875cm,inner ysep=0.375cm] (13) at (-1.500,-0.500) {};
\node [] (14) at (-1.500,-0.250) {};
\node [] (15) at (-1.500,-0.750) {};
\node [] (16) at (-1.500,-0.250) {};
\node [] (17) at (-1.500,-0.750) {};
\node [] (18) at (-1.875,-0.250) {};
\node [] (19) at (-2.250,-0.250) {};
\node [fill=white,label={[label position=center]$\vdots$}] (20) at (-1.875,-0.500) {};
\node [] (21) at (-1.875,-0.750) {};
\node [] (22) at (-2.250,-0.750) {};
\end{pgfonlayer}
\begin{pgfonlayer}{edgelayer}
\draw[] (0.center) to[out=180, in=  0] (1.center);
\draw[] (3.center) to[out=180, in=  0] (4.center);
\draw[] (1.center) to[out=180, in=  0] (8.center);
\draw[] (4.center) to[out=180, in=  0] (9.center);
\draw[] (6.center) to[out=180, in=  0] (10.center);
\draw[] (7.center) to[out=180, in=  0] (12.center);
\draw[] (10.center) to[out=180, in=  0] (16.center);
\draw[] (12.center) to[out=180, in=  0] (17.center);
\draw[] (14.center) to[out=180, in=  0] (18.center);
\draw[] (18.center) to[out=180, in=  0] (19.center);
\draw[] (15.center) to[out=180, in=  0] (21.center);
\draw[] (21.center) to[out=180, in=  0] (22.center);
\end{pgfonlayer}
\end{tikzpicture}
  & \begin{tikzpicture}[baseline=(current bounding box.center)]
\begin{pgfonlayer}{nodelayer}
\node [] (0) at (-0.000,-0.250) {};
\node [] (1) at (-0.375,-0.250) {};
\node [fill=white,label={[label position=center]$\vdots$}] (2) at (-0.375,-0.500) {};
\node [] (3) at (-0.000,-0.750) {};
\node [] (4) at (-0.375,-0.750) {};
\node [shape=rectangle,fill=white,draw,label={[label position=center]$f$},inner xsep=0.1875cm,inner ysep=0.375cm] (5) at (-0.750,-0.500) {};
\node [] (6) at (-0.750,-0.250) {};
\node [] (7) at (-0.750,-0.750) {};
\node [] (8) at (-0.750,-0.250) {};
\node [] (9) at (-0.750,-0.750) {};
\node [] (10) at (-1.125,-0.250) {};
\node [] (11) at (-1.500,-0.250) {};
\node [fill=white,label={[label position=center]$\vdots$}] (12) at (-1.125,-0.500) {};
\node [] (13) at (-1.125,-0.750) {};
\node [] (14) at (-1.500,-0.750) {};
\node [] (15) at (-0.000,-1.250) {};
\node [] (16) at (-0.375,-1.250) {};
\node [fill=white,label={[label position=center]$\vdots$}] (17) at (-0.375,-1.500) {};
\node [] (18) at (-0.000,-1.750) {};
\node [] (19) at (-0.375,-1.750) {};
\node [shape=rectangle,fill=white,draw,label={[label position=center]$g$},inner xsep=0.1875cm,inner ysep=0.375cm] (20) at (-0.750,-1.500) {};
\node [] (21) at (-0.750,-1.250) {};
\node [] (22) at (-0.750,-1.750) {};
\node [] (23) at (-0.750,-1.250) {};
\node [] (24) at (-0.750,-1.750) {};
\node [] (25) at (-1.125,-1.250) {};
\node [] (26) at (-1.500,-1.250) {};
\node [fill=white,label={[label position=center]$\vdots$}] (27) at (-1.125,-1.500) {};
\node [] (28) at (-1.125,-1.750) {};
\node [] (29) at (-1.500,-1.750) {};
\end{pgfonlayer}
\begin{pgfonlayer}{edgelayer}
\draw[] (0.center) to[out=180, in=  0] (1.center);
\draw[] (3.center) to[out=180, in=  0] (4.center);
\draw[] (1.center) to[out=180, in=  0] (8.center);
\draw[] (4.center) to[out=180, in=  0] (9.center);
\draw[] (6.center) to[out=180, in=  0] (10.center);
\draw[] (10.center) to[out=180, in=  0] (11.center);
\draw[] (7.center) to[out=180, in=  0] (13.center);
\draw[] (13.center) to[out=180, in=  0] (14.center);
\draw[] (15.center) to[out=180, in=  0] (16.center);
\draw[] (18.center) to[out=180, in=  0] (19.center);
\draw[] (16.center) to[out=180, in=  0] (23.center);
\draw[] (19.center) to[out=180, in=  0] (24.center);
\draw[] (21.center) to[out=180, in=  0] (25.center);
\draw[] (25.center) to[out=180, in=  0] (26.center);
\draw[] (22.center) to[out=180, in=  0] (28.center);
\draw[] (28.center) to[out=180, in=  0] (29.center);
\end{pgfonlayer}
\end{tikzpicture}
 & \begin{tikzpicture}[baseline=(current bounding box.center)]
\begin{pgfonlayer}{nodelayer}
\node [] (0) at (-0.000,-0.250) {};
\node [] (1) at (-0.000,-0.750) {};
\node [] (2) at (-0.500,-0.500) {};
\node [] (3) at (-1.000,-0.250) {};
\node [] (4) at (-1.000,-0.750) {};
\end{pgfonlayer}
\begin{pgfonlayer}{edgelayer}
\draw[] (1.center) to[out=180, in=-44.9] (2.center) to[out=135, in=  0] (3.center);
\draw[] (0.center) to[out=180, in= 45] (2.center) to[out=-135, in=  0] (4.center);
\end{pgfonlayer}
\end{tikzpicture}
 \\[0.5cm]
                        $f:n\to m$ & $id:1\to 1$ & $f\circ g$ & $f\otimes g$ & $\sigma : 2\to 2$
                \end{tabular}
        \end{center}

        These notations are well chosen, as the equations defining a prop correspond to equalities of diagrams that are obvious to the eye~\cite{selinger2010survey,joyal1991geometry}, so we can equivalently work with equations or with diagrams. To give one example, the symmetry axioms express that the boxes can move through wires:
        
\begin{center}
        $\begin{tikzpicture}[baseline=(current bounding box.center)]
\begin{pgfonlayer}{nodelayer}
\node [] (0) at (-0.000,-0.500) {};
\node [] (1) at (-0.500,-0.500) {};
\node [] (2) at (-1.000,-0.500) {};
\node [] (3) at (-1.500,-0.500) {};
\node [label={[label position=center]=}] (4) at (-1.750,-0.500) {};
\node [] (5) at (-2.000,-0.250) {};
\node [] (6) at (-2.000,-0.750) {};
\node [] (7) at (-2.500,-0.500) {};
\node [] (8) at (-3.000,-0.500) {};
\node [] (9) at (-3.500,-0.250) {};
\node [] (10) at (-3.500,-0.750) {};
\end{pgfonlayer}
\begin{pgfonlayer}{edgelayer}
\draw[] (0.center) to[out=180, in=  0] (1.center) to[out=180, in=  0] (2.center) to[out=180, in=  0] (3.center);
\draw[] (5.center) to[out=180, in= 45] (7.center) to[out=-135, in=-44.9] (8.center) to[out=135, in=  0] (9.center);
\draw[] (6.center) to[out=180, in=-44.9] (7.center) to[out=135, in= 45] (8.center) to[out=-135, in=  0] (10.center);
\end{pgfonlayer}
\end{tikzpicture}
 \qquad\qquad\qquad \begin{tikzpicture}[baseline=(current bounding box.center)]
\begin{pgfonlayer}{nodelayer}
\node [] (0) at (-0.000,-0.250) {};
\node [] (1) at (-0.375,-0.250) {};
\node [fill=white,label={[label position=center]$\vdots$}] (2) at (-0.375,-0.500) {};
\node [] (3) at (-0.000,-0.750) {};
\node [] (4) at (-0.375,-0.750) {};
\node [] (5) at (-0.000,-1.250) {};
\node [] (6) at (-0.375,-1.250) {};
\node [] (7) at (-0.750,-0.250) {};
\node [] (8) at (-0.750,-1.000) {};
\node [] (9) at (-1.250,-0.500) {};
\node [] (10) at (-1.250,-1.250) {};
\node [] (11) at (-1.812,-0.250) {};
\node [] (12) at (-2.750,-0.250) {};
\node [] (13) at (-1.625,-0.750) {};
\node [fill=white,label={[label position=center]$\vdots$}] (14) at (-1.625,-1.000) {};
\node [] (15) at (-1.625,-1.250) {};
\node [shape=rectangle,fill=white,draw,label={[label position=center]$f$},inner xsep=0.1875cm,inner ysep=0.375cm] (16) at (-2.000,-1.000) {};
\node [] (17) at (-2.000,-0.750) {};
\node [] (18) at (-2.000,-1.250) {};
\node [] (19) at (-2.000,-0.750) {};
\node [] (20) at (-2.000,-1.250) {};
\node [] (21) at (-2.375,-0.750) {};
\node [] (22) at (-2.750,-0.750) {};
\node [fill=white,label={[label position=center]$\vdots$}] (23) at (-2.375,-1.000) {};
\node [] (24) at (-2.375,-1.250) {};
\node [] (25) at (-2.750,-1.250) {};
\node [label={[label position=center]=}] (26) at (-3.000,-0.750) {};
\node [] (27) at (-3.250,-0.250) {};
\node [] (28) at (-3.625,-0.250) {};
\node [fill=white,label={[label position=center]$\vdots$}] (29) at (-3.625,-0.500) {};
\node [] (30) at (-3.250,-0.750) {};
\node [] (31) at (-3.625,-0.750) {};
\node [shape=rectangle,fill=white,draw,label={[label position=center]$f$},inner xsep=0.1875cm,inner ysep=0.375cm] (32) at (-4.000,-0.500) {};
\node [] (33) at (-4.000,-0.250) {};
\node [] (34) at (-4.000,-0.750) {};
\node [] (35) at (-4.000,-0.250) {};
\node [] (36) at (-4.000,-0.750) {};
\node [] (37) at (-4.375,-0.250) {};
\node [fill=white,label={[label position=center]$\vdots$}] (38) at (-4.375,-0.500) {};
\node [] (39) at (-4.375,-0.750) {};
\node [] (40) at (-3.250,-1.250) {};
\node [] (41) at (-4.188,-1.250) {};
\node [] (42) at (-4.750,-0.250) {};
\node [] (43) at (-4.750,-1.000) {};
\node [] (44) at (-5.250,-0.500) {};
\node [] (45) at (-5.250,-1.250) {};
\node [] (46) at (-5.625,-0.250) {};
\node [] (47) at (-6.000,-0.250) {};
\node [] (48) at (-5.625,-0.750) {};
\node [] (49) at (-6.000,-0.750) {};
\node [fill=white,label={[label position=center]$\vdots$}] (50) at (-5.625,-1.000) {};
\node [] (51) at (-5.625,-1.250) {};
\node [] (52) at (-6.000,-1.250) {};
\end{pgfonlayer}
\begin{pgfonlayer}{edgelayer}
\draw[] (0.center) to[out=180, in=  0] (1.center);
\draw[] (3.center) to[out=180, in=  0] (4.center);
\draw[] (5.center) to[out=180, in=  0] (6.center);
\draw[] (1.center) to[out=180, in=  0] (7.center);
\draw[] (4.center) to[out=180, in= 45] (8.center) to[out=-135, in=  0] (10.center);
\draw[] (6.center) to[out=180, in=-44.9] (8.center) to[out=135, in=-44.9] (9.center) to[out=135, in=  0] (11.center);
\draw[] (11.center) to[out=180, in=  0] (12.center);
\draw[] (7.center) to[out=180, in= 45] (9.center) to[out=-135, in=  0] (13.center);
\draw[] (10.center) to[out=180, in=  0] (15.center);
\draw[] (13.center) to[out=180, in=  0] (19.center);
\draw[] (15.center) to[out=180, in=  0] (20.center);
\draw[] (17.center) to[out=180, in=  0] (21.center);
\draw[] (21.center) to[out=180, in=  0] (22.center);
\draw[] (18.center) to[out=180, in=  0] (24.center);
\draw[] (24.center) to[out=180, in=  0] (25.center);
\draw[] (27.center) to[out=180, in=  0] (28.center);
\draw[] (30.center) to[out=180, in=  0] (31.center);
\draw[] (28.center) to[out=180, in=  0] (35.center);
\draw[] (31.center) to[out=180, in=  0] (36.center);
\draw[] (33.center) to[out=180, in=  0] (37.center);
\draw[] (34.center) to[out=180, in=  0] (39.center);
\draw[] (40.center) to[out=180, in=  0] (41.center);
\draw[] (37.center) to[out=180, in=  0] (42.center);
\draw[] (39.center) to[out=180, in= 45] (43.center) to[out=-135, in=  0] (45.center);
\draw[] (41.center) to[out=180, in=-44.9] (43.center) to[out=135, in=-44.9] (44.center) to[out=135, in=  0] (46.center);
\draw[] (46.center) to[out=180, in=  0] (47.center);
\draw[] (42.center) to[out=180, in= 45] (44.center) to[out=-135, in=  0] (48.center);
\draw[] (48.center) to[out=180, in=  0] (49.center);
\draw[] (45.center) to[out=180, in=  0] (51.center);
\draw[] (51.center) to[out=180, in=  0] (52.center);
\end{pgfonlayer}
\end{tikzpicture}
 $
\end{center}

Here are a few examples:
\begin{example}
  Let $X$ be a set. $Fun_X$ is the prop where the morphisms $n \mapsto m$ are exactly the functions from $X^n$ to $X^m$. The composition $\circ$ is the classical composition of functions, the tensor product $\otimes$ is the cartesian product.
\end{example}

\begin{example}
  Let $R$ be a semiring and $d$ an integer. $Mat_R^d$ is the prop where the morphisms $n \mapsto m$ are the matrices with coefficients in $R$ of size $d^m \times d^n$. The composition $\circ$ is the product of matrices, the tensor product $\otimes$ is the Kronecker product of matrices.
\end{example}

\subsection{Presentations}

An interesting way to give props is with generators and relations. 
In a similar way to universal algebra, a formal definition can be
obtained by first considering the free prop given by the generators
(consider all circuits obtained from the generators using the
compositions $\circ$, $\otimes$), and by then  quotienting the category by the equations. See \cite{baez17} for the formal definition.
We only only give the universal property of such a prop:

\begin{definition}
\label{defn:universalprop}  
  Let $G$ be a set of generators and $E$ be a set of equations. The prop $P$ presented by $(G,E)$ is (one of) the prop satisfying the following properties:
  \begin{itemize}
  \item $P$ contains the generators $G$ and the generators satisfy the equations of $E$
  \item For any other prop $Q$ that has this property, there is a unique prop functor from $P$ to $Q$ that sends the generators to the generators.
  \end{itemize}

  The existence of such a prop is given in \cite{baez17}. The uniqueness (upto prop isomorphism) is implied by the universal property.
\end{definition}  

We will only give here a few examples that we hope are illuminating.

\begin{example}
  In the prop with no generators and no equations, the only thing we can do is use the identity morphism $id$ and the symmetry $\sigma$ to obtain diagrams.

  Thus every diagram looks like the following one:
\begin{center}
\begin{tikzpicture}[baseline=(current bounding box.center)]
\begin{pgfonlayer}{nodelayer}
\node [] (0) at (-0.000,-0.250) {};
\node [] (1) at (-0.562,-0.250) {};
\node [] (2) at (-0.000,-0.750) {};
\node [] (3) at (-0.562,-0.750) {};
\node [] (4) at (-0.000,-1.250) {};
\node [] (5) at (-0.000,-1.750) {};
\node [] (6) at (-0.500,-1.500) {};
\node [] (7) at (-0.000,-2.250) {};
\node [] (8) at (-0.563,-2.250) {};
\node [] (9) at (-1.000,-0.500) {};
\node [] (10) at (-1.000,-1.500) {};
\node [] (11) at (-1.000,-2.250) {};
\node [] (12) at (-1.500,-0.250) {};
\node [] (13) at (-1.500,-0.750) {};
\node [] (14) at (-1.500,-1.250) {};
\node [] (15) at (-1.500,-2.000) {};
\node [] (16) at (-1.938,-0.250) {};
\node [] (17) at (-2.500,-0.250) {};
\node [] (18) at (-2.000,-1.000) {};
\node [] (19) at (-2.500,-0.750) {};
\node [] (20) at (-2.500,-1.250) {};
\node [] (21) at (-1.938,-1.750) {};
\node [] (22) at (-2.500,-1.750) {};
\node [] (23) at (-1.938,-2.250) {};
\node [] (24) at (-2.500,-2.250) {};
\end{pgfonlayer}
\begin{pgfonlayer}{edgelayer}
\draw[] (0.center) to[out=180, in=  0] (1.center);
\draw[] (2.center) to[out=180, in=  0] (3.center);
\draw[] (7.center) to[out=180, in=  0] (8.center);
\draw[] (8.center) to[out=180, in=  0] (11.center);
\draw[] (3.center) to[out=180, in=-44.9] (9.center) to[out=135, in=  0] (12.center);
\draw[] (1.center) to[out=180, in= 45] (9.center) to[out=-135, in=  0] (13.center);
\draw[] (4.center) to[out=180, in= 45] (6.center) to[out=-135, in=-44.9] (10.center) to[out=135, in=  0] (14.center);
\draw[] (12.center) to[out=180, in=  0] (16.center);
\draw[] (16.center) to[out=180, in=  0] (17.center);
\draw[] (14.center) to[out=180, in=-44.9] (18.center) to[out=135, in=  0] (19.center);
\draw[] (13.center) to[out=180, in= 45] (18.center) to[out=-135, in=  0] (20.center);
\draw[] (11.center) to[out=180, in=-44.9] (15.center) to[out=135, in=  0] (21.center);
\draw[] (21.center) to[out=180, in=  0] (22.center);
\draw[] (5.center) to[out=180, in=-44.9] (6.center) to[out=135, in= 45] (10.center) to[out=-135, in= 45] (15.center) to[out=-135, in=  0] (23.center);
\draw[] (23.center) to[out=180, in=  0] (24.center);
\end{pgfonlayer}
\end{tikzpicture}

\end{center}
and it is easy to see that this prop is (isomorphic to) the prop $Sym$ of invertible maps, i.e. $Sym[n,m]$ is empty if $n\not=m$ and $Sym[n,n]$ is the set of 
all invertible functions from $[1,\dots, n]$ to itself, with $\circ$ as function composition and $\otimes$ is disjoint union.
\end{example}

\begin{example}
  Consider the prop with two generators $\mu: 2 \rightarrow 1$ and $\eta: 0 \rightarrow 1$ that satisfy the equations: ${\mu\circ (\eta\otimes \mathrm{id})=\mu\circ (\mathrm{id}\otimes \eta)=\mathrm{id}
 }$ , 
  ${\mu\circ (\mu\otimes \mathrm{id})=\mu\circ (\mathrm{id}\otimes \mu)
 }$ and ${\mu\circ \sigma=\mu
 }$.

If we depict the generators as
{\begin{tikzpicture}[baseline=(current bounding box.center)]
\begin{pgfonlayer}{nodelayer}
\node [] (0) at (-0.000,-0.250) {};
\node [style=monoid] (1) at (-0.500,-0.250) {};
\node [] (2) at (-1.000,-0.125) {};
\node [] (3) at (-1.000,-0.375) {};
\end{pgfonlayer}
\begin{pgfonlayer}{edgelayer}
\draw[] (0.center) to[out=180, in=  0] (1.center);
\draw[] (1.center) to[out=153, in=  0] (2.center);
\draw[] (1.center) to[out=-153, in=  0] (3.center);
\end{pgfonlayer}
\end{tikzpicture}
 } and
{\begin{tikzpicture}[baseline=(current bounding box.center)]
\begin{pgfonlayer}{nodelayer}
\node [] (0) at (-0.000,-0.250) {};
\node [style=monoid] (1) at (-0.500,-0.250) {};
\end{pgfonlayer}
\begin{pgfonlayer}{edgelayer}
\draw[] (0.center) to[out=180, in=  0] (1.center);
\end{pgfonlayer}
\end{tikzpicture}
 }, the equations become:

\hfill    {\begin{tikzpicture}[baseline=(current bounding box.center)]
\begin{pgfonlayer}{nodelayer}
\node [] (0) at (-0.000,-0.500) {};
\node [style=monoid] (1) at (-0.500,-0.500) {};
\node [] (2) at (-1.000,-0.250) {};
\node [] (3) at (-1.500,-0.250) {};
\node [style=monoid] (4) at (-1.000,-0.750) {};
\node [] (5) at (-1.500,-0.625) {};
\node [] (6) at (-1.500,-0.875) {};
\node [label={[label position=center]=}] (7) at (-1.750,-0.500) {};
\node [] (8) at (-2.000,-0.500) {};
\node [style=monoid] (9) at (-2.500,-0.500) {};
\node [style=monoid] (10) at (-3.000,-0.250) {};
\node [] (11) at (-3.500,-0.125) {};
\node [] (12) at (-3.500,-0.375) {};
\node [] (13) at (-3.000,-0.750) {};
\node [] (14) at (-3.500,-0.750) {};
\end{pgfonlayer}
\begin{pgfonlayer}{edgelayer}
\draw[] (0.center) to[out=180, in=  0] (1.center);
\draw[] (1.center) to[out=135, in=  0] (2.center);
\draw[] (2.center) to[out=180, in=  0] (3.center);
\draw[] (1.center) to[out=-135, in=  0] (4.center);
\draw[] (4.center) to[out=153, in=  0] (5.center);
\draw[] (4.center) to[out=-153, in=  0] (6.center);
\draw[] (8.center) to[out=180, in=  0] (9.center);
\draw[] (9.center) to[out=135, in=  0] (10.center);
\draw[] (10.center) to[out=153, in=  0] (11.center);
\draw[] (10.center) to[out=-153, in=  0] (12.center);
\draw[] (9.center) to[out=-135, in=  0] (13.center);
\draw[] (13.center) to[out=180, in=  0] (14.center);
\end{pgfonlayer}
\end{tikzpicture}
 } \hfill
{\begin{tikzpicture}[baseline=(current bounding box.center)]
\begin{pgfonlayer}{nodelayer}
\node [] (0) at (-0.000,-0.500) {};
\node [] (1) at (-0.500,-0.500) {};
\node [] (2) at (-1.000,-0.500) {};
\node [label={[label position=center]=}] (3) at (-1.250,-0.500) {};
\node [] (4) at (-1.500,-0.500) {};
\node [style=monoid] (5) at (-2.000,-0.500) {};
\node [] (6) at (-2.500,-0.250) {};
\node [] (7) at (-3.000,-0.250) {};
\node [style=monoid] (8) at (-2.500,-0.750) {};
\node [label={[label position=center]=}] (9) at (-3.250,-0.500) {};
\node [] (10) at (-3.500,-0.500) {};
\node [style=monoid] (11) at (-4.000,-0.500) {};
\node [style=monoid] (12) at (-4.500,-0.250) {};
\node [] (13) at (-4.500,-0.750) {};
\node [] (14) at (-5.000,-0.750) {};
\end{pgfonlayer}
\begin{pgfonlayer}{edgelayer}
\draw[] (0.center) to[out=180, in=  0] (1.center);
\draw[] (1.center) to[out=180, in=  0] (2.center);
\draw[] (4.center) to[out=180, in=  0] (5.center);
\draw[] (5.center) to[out=135, in=  0] (6.center);
\draw[] (6.center) to[out=180, in=  0] (7.center);
\draw[] (5.center) to[out=-135, in=  0] (8.center);
\draw[] (10.center) to[out=180, in=  0] (11.center);
\draw[] (11.center) to[out=135, in=  0] (12.center);
\draw[] (11.center) to[out=-135, in=  0] (13.center);
\draw[] (13.center) to[out=180, in=  0] (14.center);
\end{pgfonlayer}
\end{tikzpicture}
 } \hfill
{\begin{tikzpicture}[baseline=(current bounding box.center)]
\begin{pgfonlayer}{nodelayer}
\node [] (0) at (-0.000,-0.500) {};
\node [style=monoid] (1) at (-0.500,-0.500) {};
\node [] (2) at (-1.000,-0.250) {};
\node [] (3) at (-1.000,-0.750) {};
\node [label={[label position=center]=}] (4) at (-1.250,-0.500) {};
\node [] (5) at (-1.500,-0.500) {};
\node [style=monoid] (6) at (-2.000,-0.500) {};
\node [] (7) at (-2.500,-0.500) {};
\node [] (8) at (-3.000,-0.250) {};
\node [] (9) at (-3.000,-0.750) {};
\end{pgfonlayer}
\begin{pgfonlayer}{edgelayer}
\draw[] (0.center) to[out=180, in=  0] (1.center);
\draw[] (1.center) to[out=135, in=  0] (2.center);
\draw[] (1.center) to[out=-135, in=  0] (3.center);
\draw[] (5.center) to[out=180, in=  0] (6.center);
\draw[] (6.center) to[out=-135, in=-44.9] (7.center) to[out=135, in=  0] (8.center);
\draw[] (6.center) to[out=135, in= 45] (7.center) to[out=-135, in=  0] (9.center);
\end{pgfonlayer}
\end{tikzpicture}
 } \hfill

Using the first equation, one can ``define'' a generalized  version of the diamond of type $k \rightarrow 1$ inputs by composing $k-1$ diamonds $2 \rightarrow 1$. The first equation essentially states that the order of composition is irrelevant.

Now it can be proven easily that every morphism can be represented by a diagram of the following form:
\begin{center}
\begin{tikzpicture}[baseline=(current bounding box.center)]
\begin{pgfonlayer}{nodelayer}
\node [] (0) at (-0.000,-0.250) {};
\node [style=monoid] (1) at (-0.500,-0.250) {};
\node [] (2) at (-0.000,-1.250) {};
\node [style=monoid] (3) at (-0.500,-1.250) {};
\node [] (4) at (-0.000,-2.250) {};
\node [] (5) at (-0.563,-2.250) {};
\node [] (6) at (-0.000,-2.750) {};
\node [style=monoid] (7) at (-0.500,-2.750) {};
\node [] (8) at (-0.000,-3.500) {};
\node [style=monoid] (9) at (-0.500,-3.500) {};
\node [] (10) at (-1.000,-0.333) {};
\node [] (11) at (-1.000,-1.000) {};
\node [] (12) at (-1.000,-1.667) {};
\node [] (13) at (-1.000,-2.667) {};
\node [] (14) at (-1.000,-3.667) {};
\node [] (15) at (-1.500,-0.333) {};
\node [] (16) at (-1.500,-1.000) {};
\node [] (17) at (-1.500,-2.000) {};
\node [] (18) at (-1.500,-3.333) {};
\node [] (19) at (-2.000,-0.333) {};
\node [] (20) at (-2.000,-1.333) {};
\node [] (21) at (-2.000,-2.667) {};
\node [] (22) at (-2.000,-3.667) {};
\node [] (23) at (-2.438,-0.333) {};
\node [] (24) at (-3.000,-0.333) {};
\node [] (25) at (-2.438,-1.000) {};
\node [] (26) at (-3.000,-1.000) {};
\node [] (27) at (-2.500,-2.000) {};
\node [] (28) at (-3.000,-1.667) {};
\node [] (29) at (-3.000,-2.333) {};
\node [] (30) at (-2.438,-3.000) {};
\node [] (31) at (-3.000,-3.000) {};
\node [] (32) at (-2.438,-3.667) {};
\node [] (33) at (-3.000,-3.667) {};
\end{pgfonlayer}
\begin{pgfonlayer}{edgelayer}
\draw[] (0.center) to[out=180, in=  0] (1.center);
\draw[] (2.center) to[out=180, in=  0] (3.center);
\draw[] (4.center) to[out=180, in=  0] (5.center);
\draw[] (6.center) to[out=180, in=  0] (7.center);
\draw[] (8.center) to[out=180, in=  0] (9.center);
\draw[] (3.center) to[out=117, in=  0] (10.center);
\draw[] (3.center) to[out=180, in=  0] (11.center);
\draw[] (3.center) to[out=-116, in=  0] (12.center);
\draw[] (9.center) to[out=-135, in=  0] (14.center);
\draw[] (10.center) to[out=180, in=  0] (15.center);
\draw[] (11.center) to[out=180, in=  0] (16.center);
\draw[] (15.center) to[out=180, in=  0] (19.center);
\draw[] (5.center) to[out=180, in= 45] (13.center) to[out=-135, in= 45] (18.center) to[out=-135, in=  0] (22.center);
\draw[] (19.center) to[out=180, in=  0] (23.center);
\draw[] (23.center) to[out=180, in=  0] (24.center);
\draw[] (9.center) to[out=135, in=-44.9] (13.center) to[out=135, in=-44.9] (17.center) to[out=135, in=-44.9] (20.center) to[out=135, in=  0] (25.center);
\draw[] (25.center) to[out=180, in=  0] (26.center);
\draw[] (14.center) to[out=180, in=-44.9] (18.center) to[out=135, in=-44.9] (21.center) to[out=135, in=-44.9] (27.center) to[out=135, in=  0] (28.center);
\draw[] (16.center) to[out=180, in= 45] (20.center) to[out=-135, in= 45] (27.center) to[out=-135, in=  0] (29.center);
\draw[] (12.center) to[out=180, in= 45] (17.center) to[out=-135, in= 45] (21.center) to[out=-135, in=  0] (30.center);
\draw[] (30.center) to[out=180, in=  0] (31.center);
\draw[] (22.center) to[out=180, in=  0] (32.center);
\draw[] (32.center) to[out=180, in=  0] (33.center);
\end{pgfonlayer}
\end{tikzpicture}

\end{center}

It is then easy to see that what we obtain is the prop $Fun$ 
where $Fun[n,m]$ is the set of  all functions from $[1,\dots, n]$ to $[1,\dots, m]$, with $\circ$ as function composition and $\otimes$ is disjoint union.
The picture above represents in particular the function from $[1,2,3,4,5,6]$ to $[1,2,3,4,5]$ given by $1\mapsto 2\ \  2\mapsto 5\ \  3\mapsto 5\ \  4 \mapsto 2\ \  5\mapsto 2\ \  6\mapsto 3$.

\end{example}

Now, to prefigure what we will be doing later on, remark that the equations satisfied by $\mu$ and  $\eta$ are exactly the equations of a commutative monoid. This means that the diagrams of the prop $Fun$ can be interpreted (i.e., there is a functor) in any prop that contains two morphisms satisfying the monoid equations (this is of course by the universal property of a prop given by generators and relations).

As an example, remember the prop $Fun_X$, the prop where the morphisms $n \mapsto m$ are exactly the functions from $X^n$ to $ X^m$, in the particular case $X = \mathbb{R}$.
This prop contains a morphism $\mu: \mathbb{R}^2 \to \mathbb{R}$ defined by $\mu(x,y) = x+y$ and a morphism $\eta: \mathbb{R}^0 \to \mathbb{R}$ (a constant) define by $\eta = 0$ and these two morphism satisfy the monoid equations.
This means every diagram in the prop $Fun$ can be interpreted in the prop $Fun_X$. The previous diagram will correspond to the map from $\mathbb{R}^6 \to \mathbb{R}^5$ defined by
$(x_1, x_2, x_3, x_4, x_5, x_6) \mapsto (0,x_1+x_4+x_5, x_6, 0, x_2+x_3)$

In the case where we start from a prop with a complicated set of generators of equations, we can use this method to transform diagrams in a prop we do not understand into diagrams in a prop we do understand. This might be used for example to prove that two different diagrams are nonequal (interpret the diagrams in another prop in which it is clear that they are nonequal) and will be the focus of the last section.

The most important example of a finitely presented prop is the following one, on which we will base all works in this paper:
\begin{definition}
\label{def:matricesintegers}  The prop MAT is the prop with the following presentation:
  \begin{itemize}
  \item 4 generators $\mu: 2 \rightarrow 1$, $\eta: 0 \rightarrow 1$, $\Delta: 1 \rightarrow 2$, $\epsilon: 1 \rightarrow 0$ and 
  \item 10 equations:
\[\begin{array}{lll}    
    {\mu\circ (\eta\otimes \mathrm{id})=\mu\circ (\mathrm{id}\otimes \eta)=\mathrm{id}
 } &    {\mu\circ (\mu\otimes \mathrm{id})=\mu\circ (\mathrm{id}\otimes \mu)
 } &     {\mu\circ \sigma=\mu
 }\\
    {(\epsilon\otimes \mathrm{id})\circ \Delta=(\mathrm{id}\otimes \epsilon)\circ \Delta=\mathrm{id}
 } &    {(\Delta\otimes \mathrm{id})\circ \Delta=(\mathrm{id}\otimes \Delta)\circ \Delta
 }&    {\sigma\circ \Delta=\Delta
 }\\
    {\Delta\circ \eta=\eta\otimes \eta
 } &    {\epsilon\circ \mu=\epsilon\otimes \epsilon
 }&    {\epsilon\circ \eta=1
 }\\
  &  {(\mu\otimes \mu)\circ (\mathrm{id}\otimes \sigma\otimes \mathrm{id})\circ (\Delta\otimes \Delta)=\Delta\circ \mu
 } \\
\end{array}
\]
  \end{itemize}
  In pictures:
\[\begin{array}{lll}    
    {\begin{tikzpicture}[baseline=(current bounding box.center)]
\begin{pgfonlayer}{nodelayer}
\node [] (0) at (-0.000,-0.500) {};
\node [] (1) at (-0.500,-0.500) {};
\node [] (2) at (-1.000,-0.500) {};
\node [label={[label position=center]=}] (3) at (-1.250,-0.500) {};
\node [] (4) at (-1.500,-0.500) {};
\node [style=monoid] (5) at (-2.000,-0.500) {};
\node [] (6) at (-2.500,-0.250) {};
\node [] (7) at (-3.000,-0.250) {};
\node [style=monoid] (8) at (-2.500,-0.750) {};
\node [label={[label position=center]=}] (9) at (-3.250,-0.500) {};
\node [] (10) at (-3.500,-0.500) {};
\node [style=monoid] (11) at (-4.000,-0.500) {};
\node [style=monoid] (12) at (-4.500,-0.250) {};
\node [] (13) at (-4.500,-0.750) {};
\node [] (14) at (-5.000,-0.750) {};
\end{pgfonlayer}
\begin{pgfonlayer}{edgelayer}
\draw[] (0.center) to[out=180, in=  0] (1.center);
\draw[] (1.center) to[out=180, in=  0] (2.center);
\draw[] (4.center) to[out=180, in=  0] (5.center);
\draw[] (5.center) to[out=135, in=  0] (6.center);
\draw[] (6.center) to[out=180, in=  0] (7.center);
\draw[] (5.center) to[out=-135, in=  0] (8.center);
\draw[] (10.center) to[out=180, in=  0] (11.center);
\draw[] (11.center) to[out=135, in=  0] (12.center);
\draw[] (11.center) to[out=-135, in=  0] (13.center);
\draw[] (13.center) to[out=180, in=  0] (14.center);
\end{pgfonlayer}
\end{tikzpicture}
 } &    {\begin{tikzpicture}[baseline=(current bounding box.center)]
\begin{pgfonlayer}{nodelayer}
\node [] (0) at (-0.000,-0.500) {};
\node [style=monoid] (1) at (-0.500,-0.500) {};
\node [] (2) at (-1.000,-0.250) {};
\node [] (3) at (-1.500,-0.250) {};
\node [style=monoid] (4) at (-1.000,-0.750) {};
\node [] (5) at (-1.500,-0.625) {};
\node [] (6) at (-1.500,-0.875) {};
\node [label={[label position=center]=}] (7) at (-1.750,-0.500) {};
\node [] (8) at (-2.000,-0.500) {};
\node [style=monoid] (9) at (-2.500,-0.500) {};
\node [style=monoid] (10) at (-3.000,-0.250) {};
\node [] (11) at (-3.500,-0.125) {};
\node [] (12) at (-3.500,-0.375) {};
\node [] (13) at (-3.000,-0.750) {};
\node [] (14) at (-3.500,-0.750) {};
\end{pgfonlayer}
\begin{pgfonlayer}{edgelayer}
\draw[] (0.center) to[out=180, in=  0] (1.center);
\draw[] (1.center) to[out=135, in=  0] (2.center);
\draw[] (2.center) to[out=180, in=  0] (3.center);
\draw[] (1.center) to[out=-135, in=  0] (4.center);
\draw[] (4.center) to[out=153, in=  0] (5.center);
\draw[] (4.center) to[out=-153, in=  0] (6.center);
\draw[] (8.center) to[out=180, in=  0] (9.center);
\draw[] (9.center) to[out=135, in=  0] (10.center);
\draw[] (10.center) to[out=153, in=  0] (11.center);
\draw[] (10.center) to[out=-153, in=  0] (12.center);
\draw[] (9.center) to[out=-135, in=  0] (13.center);
\draw[] (13.center) to[out=180, in=  0] (14.center);
\end{pgfonlayer}
\end{tikzpicture}
 } &     {\begin{tikzpicture}[baseline=(current bounding box.center)]
\begin{pgfonlayer}{nodelayer}
\node [] (0) at (-0.000,-0.500) {};
\node [style=monoid] (1) at (-0.500,-0.500) {};
\node [] (2) at (-1.000,-0.250) {};
\node [] (3) at (-1.000,-0.750) {};
\node [label={[label position=center]=}] (4) at (-1.250,-0.500) {};
\node [] (5) at (-1.500,-0.500) {};
\node [style=monoid] (6) at (-2.000,-0.500) {};
\node [] (7) at (-2.500,-0.500) {};
\node [] (8) at (-3.000,-0.250) {};
\node [] (9) at (-3.000,-0.750) {};
\end{pgfonlayer}
\begin{pgfonlayer}{edgelayer}
\draw[] (0.center) to[out=180, in=  0] (1.center);
\draw[] (1.center) to[out=135, in=  0] (2.center);
\draw[] (1.center) to[out=-135, in=  0] (3.center);
\draw[] (5.center) to[out=180, in=  0] (6.center);
\draw[] (6.center) to[out=-135, in=-44.9] (7.center) to[out=135, in=  0] (8.center);
\draw[] (6.center) to[out=135, in= 45] (7.center) to[out=-135, in=  0] (9.center);
\end{pgfonlayer}
\end{tikzpicture}
 }\\
    \\
      {\begin{tikzpicture}[baseline=(current bounding box.center)]
\begin{pgfonlayer}{nodelayer}
\node [] (0) at (-0.000,-0.500) {};
\node [] (1) at (-0.500,-0.500) {};
\node [] (2) at (-1.000,-0.500) {};
\node [label={[label position=center]=}] (3) at (-1.250,-0.500) {};
\node [] (4) at (-1.500,-0.250) {};
\node [] (5) at (-2.000,-0.250) {};
\node [style=bmonoid] (6) at (-2.000,-0.750) {};
\node [style=bmonoid] (7) at (-2.500,-0.500) {};
\node [] (8) at (-3.000,-0.500) {};
\node [label={[label position=center]=}] (9) at (-3.250,-0.500) {};
\node [style=bmonoid] (10) at (-4.000,-0.250) {};
\node [] (11) at (-3.500,-0.750) {};
\node [] (12) at (-4.000,-0.750) {};
\node [style=bmonoid] (13) at (-4.500,-0.500) {};
\node [] (14) at (-5.000,-0.500) {};
\end{pgfonlayer}
\begin{pgfonlayer}{edgelayer}
\draw[] (0.center) to[out=180, in=  0] (1.center);
\draw[] (1.center) to[out=180, in=  0] (2.center);
\draw[] (4.center) to[out=180, in=  0] (5.center);
\draw[] (5.center) to[out=180, in= 45] (7.center);
\draw[] (6.center) to[out=180, in=-44.9] (7.center);
\draw[] (7.center) to[out=180, in=  0] (8.center);
\draw[] (11.center) to[out=180, in=  0] (12.center);
\draw[] (10.center) to[out=180, in= 45] (13.center);
\draw[] (12.center) to[out=180, in=-44.9] (13.center);
\draw[] (13.center) to[out=180, in=  0] (14.center);
\end{pgfonlayer}
\end{tikzpicture}
 } &    {\begin{tikzpicture}[baseline=(current bounding box.center)]
\begin{pgfonlayer}{nodelayer}
\node [] (0) at (-0.000,-0.250) {};
\node [] (1) at (-0.500,-0.250) {};
\node [] (2) at (-0.000,-0.625) {};
\node [] (3) at (-0.000,-0.875) {};
\node [style=bmonoid] (4) at (-0.500,-0.750) {};
\node [style=bmonoid] (5) at (-1.000,-0.500) {};
\node [] (6) at (-1.500,-0.500) {};
\node [label={[label position=center]=}] (7) at (-1.750,-0.500) {};
\node [] (8) at (-2.000,-0.125) {};
\node [] (9) at (-2.000,-0.375) {};
\node [style=bmonoid] (10) at (-2.500,-0.250) {};
\node [] (11) at (-2.000,-0.750) {};
\node [] (12) at (-2.500,-0.750) {};
\node [style=bmonoid] (13) at (-3.000,-0.500) {};
\node [] (14) at (-3.500,-0.500) {};
\end{pgfonlayer}
\begin{pgfonlayer}{edgelayer}
\draw[] (0.center) to[out=180, in=  0] (1.center);
\draw[] (2.center) to[out=180, in=26.6] (4.center);
\draw[] (3.center) to[out=180, in=-26.5] (4.center);
\draw[] (1.center) to[out=180, in= 45] (5.center);
\draw[] (4.center) to[out=180, in=-44.9] (5.center);
\draw[] (5.center) to[out=180, in=  0] (6.center);
\draw[] (8.center) to[out=180, in=26.6] (10.center);
\draw[] (9.center) to[out=180, in=-26.5] (10.center);
\draw[] (11.center) to[out=180, in=  0] (12.center);
\draw[] (10.center) to[out=180, in= 45] (13.center);
\draw[] (12.center) to[out=180, in=-44.9] (13.center);
\draw[] (13.center) to[out=180, in=  0] (14.center);
\end{pgfonlayer}
\end{tikzpicture}
 }&    {\begin{tikzpicture}[baseline=(current bounding box.center)]
\begin{pgfonlayer}{nodelayer}
\node [] (0) at (-0.000,-0.250) {};
\node [] (1) at (-0.000,-0.750) {};
\node [style=bmonoid] (2) at (-0.500,-0.500) {};
\node [] (3) at (-1.000,-0.500) {};
\node [label={[label position=center]=}] (4) at (-1.250,-0.500) {};
\node [] (5) at (-1.500,-0.250) {};
\node [] (6) at (-1.500,-0.750) {};
\node [] (7) at (-2.000,-0.500) {};
\node [style=bmonoid] (8) at (-2.500,-0.500) {};
\node [] (9) at (-3.000,-0.500) {};
\end{pgfonlayer}
\begin{pgfonlayer}{edgelayer}
\draw[] (0.center) to[out=180, in= 45] (2.center);
\draw[] (1.center) to[out=180, in=-44.9] (2.center);
\draw[] (2.center) to[out=180, in=  0] (3.center);
\draw[] (6.center) to[out=180, in=-44.9] (7.center) to[out=135, in= 45] (8.center);
\draw[] (5.center) to[out=180, in= 45] (7.center) to[out=-135, in=-44.9] (8.center);
\draw[] (8.center) to[out=180, in=  0] (9.center);
\end{pgfonlayer}
\end{tikzpicture}
 }\\
      \\
        {\begin{tikzpicture}
\begin{pgfonlayer}{nodelayer}
\node [] (0) at (-0.000,-0.250) {};
\node [style=monoid] (1) at (-0.500,-0.250) {};
\node [] (2) at (-0.000,-0.750) {};
\node [style=monoid] (3) at (-0.500,-0.750) {};
\node [label={[label position=center]=}] (4) at (-1.000,-0.500) {};
\node [] (5) at (-1.250,-0.250) {};
\node [] (6) at (-1.250,-0.750) {};
\node [style=bmonoid] (7) at (-1.750,-0.500) {};
\node [style=monoid] (8) at (-2.250,-0.500) {};
\end{pgfonlayer}
\begin{pgfonlayer}{edgelayer}
\draw[] (0.center) to[out=180, in=  0] (1.center);
\draw[] (2.center) to[out=180, in=  0] (3.center);
\draw[] (5.center) to[out=180, in= 45] (7.center);
\draw[] (6.center) to[out=180, in=-44.9] (7.center);
\draw[] (7.center) to[out=180, in=  0] (8.center);
\end{pgfonlayer}
\end{tikzpicture}
 } &    {\begin{tikzpicture}
\begin{pgfonlayer}{nodelayer}
\node [style=bmonoid] (0) at (-0.250,-0.250) {};
\node [] (1) at (-0.750,-0.250) {};
\node [style=bmonoid] (2) at (-0.250,-0.750) {};
\node [] (3) at (-0.750,-0.750) {};
\node [label={[label position=center]=}] (4) at (-1.000,-0.500) {};
\node [style=bmonoid] (5) at (-1.500,-0.500) {};
\node [style=monoid] (6) at (-2.000,-0.500) {};
\node [] (7) at (-2.500,-0.250) {};
\node [] (8) at (-2.500,-0.750) {};
\end{pgfonlayer}
\begin{pgfonlayer}{edgelayer}
\draw[] (0.center) to[out=180, in=  0] (1.center);
\draw[] (2.center) to[out=180, in=  0] (3.center);
\draw[] (5.center) to[out=180, in=  0] (6.center);
\draw[] (6.center) to[out=135, in=  0] (7.center);
\draw[] (6.center) to[out=-135, in=  0] (8.center);
\end{pgfonlayer}
\end{tikzpicture}
 }&    {\begin{tikzpicture}
\begin{pgfonlayer}{nodelayer}
\node [shape=rectangle,fill=white,draw=,dashed=,inner xsep=0.125cm,inner ysep=0.125cm] (0) at (-0.250,-0.250) {};
\node [label={[label position=center]=}] (1) at (-0.750,-0.250) {};
\node [style=bmonoid] (2) at (-1.250,-0.250) {};
\node [style=monoid] (3) at (-1.750,-0.250) {};
\end{pgfonlayer}
\begin{pgfonlayer}{edgelayer}
\draw[] (2.center) to[out=180, in=  0] (3.center);
\end{pgfonlayer}
\end{tikzpicture}
 }\\
        \\
&    {\begin{tikzpicture}
\begin{pgfonlayer}{nodelayer}
\node [] (0) at (-0.000,-0.500) {};
\node [] (1) at (-0.000,-1.500) {};
\node [style=bmonoid] (2) at (-0.500,-1.000) {};
\node [style=monoid] (3) at (-1.000,-1.000) {};
\node [] (4) at (-1.500,-0.500) {};
\node [] (5) at (-1.500,-1.500) {};
\node [label={[label position=center]=}] (6) at (-1.750,-1.000) {};
\node [] (7) at (-2.000,-0.500) {};
\node [style=monoid] (8) at (-2.500,-0.500) {};
\node [] (9) at (-2.000,-1.500) {};
\node [style=monoid] (10) at (-2.500,-1.500) {};
\node [] (11) at (-3.000,-0.250) {};
\node [] (12) at (-3.000,-1.000) {};
\node [] (13) at (-3.000,-1.750) {};
\node [style=bmonoid] (14) at (-3.500,-0.500) {};
\node [] (15) at (-4.000,-0.500) {};
\node [style=bmonoid] (16) at (-3.500,-1.500) {};
\node [] (17) at (-4.000,-1.500) {};
\end{pgfonlayer}
\begin{pgfonlayer}{edgelayer}
\draw[] (0.center) to[out=180, in=63.4] (2.center);
\draw[] (1.center) to[out=180, in=-63.3] (2.center);
\draw[] (2.center) to[out=180, in=  0] (3.center);
\draw[] (3.center) to[out=117, in=  0] (4.center);
\draw[] (3.center) to[out=-116, in=  0] (5.center);
\draw[] (7.center) to[out=180, in=  0] (8.center);
\draw[] (9.center) to[out=180, in=  0] (10.center);
\draw[] (8.center) to[out=135, in=  0] (11.center);
\draw[] (10.center) to[out=-135, in=  0] (13.center);
\draw[] (11.center) to[out=180, in= 45] (14.center);
\draw[] (10.center) to[out=135, in=-44.9] (12.center) to[out=135, in=-44.9] (14.center);
\draw[] (14.center) to[out=180, in=  0] (15.center);
\draw[] (8.center) to[out=-135, in= 45] (12.center) to[out=-135, in= 45] (16.center);
\draw[] (13.center) to[out=180, in=-44.9] (16.center);
\draw[] (16.center) to[out=180, in=  0] (17.center);
\end{pgfonlayer}
\end{tikzpicture}
 } \\
\end{array}
\]

\end{definition}
The first three equations are the monoid equations we saw before, the next three equations are comonoid equations (dual of monoid equations). The last four equations are called the bigebra/bialgebra/bimonoid equations.

With the first equations, we may define as previously white diamonds of type $k\to 1$ and black diamonds of type $1 \to k$.
The last four equations are essentially commuting equations, saying that when we want to do a black and a white operation, the order of the operation doesn't matter.

With these ten equations, it is easy to see that any diagram can be put in the following form: first white diamonds, then a permutation, then black diamonds, as in the following example:

\begin{center}
\begin{tikzpicture}[baseline=(current bounding box.center)]
\begin{pgfonlayer}{nodelayer}
\node [] (0) at (-0.000,-0.250) {};
\node [style=monoid] (1) at (-0.500,-0.250) {};
\node [] (2) at (-0.000,-1.250) {};
\node [style=monoid] (3) at (-0.500,-1.250) {};
\node [] (4) at (-0.000,-2.250) {};
\node [] (5) at (-0.563,-2.250) {};
\node [] (6) at (-0.000,-2.750) {};
\node [style=monoid] (7) at (-0.500,-2.750) {};
\node [] (8) at (-0.000,-3.500) {};
\node [style=monoid] (9) at (-0.500,-3.500) {};
\node [] (10) at (-1.000,-0.333) {};
\node [] (11) at (-1.000,-1.000) {};
\node [] (12) at (-1.000,-1.667) {};
\node [] (13) at (-1.000,-2.667) {};
\node [] (14) at (-1.000,-3.667) {};
\node [] (15) at (-1.500,-0.333) {};
\node [] (16) at (-1.500,-1.000) {};
\node [] (17) at (-1.500,-2.000) {};
\node [] (18) at (-1.500,-3.333) {};
\node [] (19) at (-2.000,-0.333) {};
\node [] (20) at (-2.000,-1.333) {};
\node [] (21) at (-2.000,-2.667) {};
\node [] (22) at (-2.000,-3.667) {};
\node [] (23) at (-2.500,-0.333) {};
\node [] (24) at (-2.500,-1.000) {};
\node [] (25) at (-2.500,-2.000) {};
\node [] (26) at (-2.500,-3.000) {};
\node [] (27) at (-2.500,-3.667) {};
\node [style=bmonoid] (28) at (-3.000,-0.286) {};
\node [] (29) at (-3.500,-0.286) {};
\node [style=bmonoid] (30) at (-3.000,-1.143) {};
\node [] (31) at (-3.500,-1.143) {};
\node [] (32) at (-2.938,-2.000) {};
\node [] (33) at (-3.500,-2.000) {};
\node [style=bmonoid] (34) at (-3.000,-3.143) {};
\node [] (35) at (-3.500,-3.143) {};
\end{pgfonlayer}
\begin{pgfonlayer}{edgelayer}
\draw[] (0.center) to[out=180, in=  0] (1.center);
\draw[] (2.center) to[out=180, in=  0] (3.center);
\draw[] (4.center) to[out=180, in=  0] (5.center);
\draw[] (6.center) to[out=180, in=  0] (7.center);
\draw[] (8.center) to[out=180, in=  0] (9.center);
\draw[] (3.center) to[out=117, in=  0] (10.center);
\draw[] (3.center) to[out=180, in=  0] (11.center);
\draw[] (3.center) to[out=-116, in=  0] (12.center);
\draw[] (9.center) to[out=-135, in=  0] (14.center);
\draw[] (10.center) to[out=180, in=  0] (15.center);
\draw[] (11.center) to[out=180, in=  0] (16.center);
\draw[] (15.center) to[out=180, in=  0] (19.center);
\draw[] (5.center) to[out=180, in= 45] (13.center) to[out=-135, in= 45] (18.center) to[out=-135, in=  0] (22.center);
\draw[] (19.center) to[out=180, in=  0] (23.center);
\draw[] (9.center) to[out=135, in=-44.9] (13.center) to[out=135, in=-44.9] (17.center) to[out=135, in=-44.9] (20.center) to[out=135, in=  0] (24.center);
\draw[] (12.center) to[out=180, in= 45] (17.center) to[out=-135, in= 45] (21.center) to[out=-135, in=  0] (26.center);
\draw[] (22.center) to[out=180, in=  0] (27.center);
\draw[] (23.center) to[out=180, in=29.7] (28.center);
\draw[] (24.center) to[out=180, in=-29.6] (28.center);
\draw[] (28.center) to[out=180, in=  0] (29.center);
\draw[] (30.center) to[out=180, in=  0] (31.center);
\draw[] (14.center) to[out=180, in=-44.9] (18.center) to[out=135, in=-44.9] (21.center) to[out=135, in=-44.9] (25.center) to[out=135, in=  0] (32.center);
\draw[] (32.center) to[out=180, in=  0] (33.center);
\draw[] (16.center) to[out=180, in= 45] (20.center) to[out=-135, in= 45] (25.center) to[out=-135, in=66.4] (34.center);
\draw[] (26.center) to[out=180, in=  0] (34.center);
\draw[] (27.center) to[out=180, in=-66.3] (34.center);
\draw[] (34.center) to[out=180, in=  0] (35.center);
\end{pgfonlayer}
\end{tikzpicture}

\end{center}
\newcommand\matrices[1]{M_{\star,\star}(#1)}

One can then prove, using the bialgebra rules, the following result:
\begin{proposition}
  The prop MAT is (isomorphic to) the prop $\matrices{\mathbb{Z}_+}$, the prop of matrices with nonnegative coefficients.
  Specifically $MAT[n,m] = M_{m,n}(\mathbb{Z}_+)$ is the set of matrices over $\mathbb{Z}_+$ of size $m \times n$, with $\circ$ the product of matrices and $\otimes$ the direct sum : $A \otimes B = \begin{pmatrix} A & 0 \\ 0 & B \end{pmatrix}$  
\end{proposition}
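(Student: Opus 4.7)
The plan is to construct mutually inverse prop functors $F \colon \mathrm{MAT} \to \matrices{\mathbb{Z}_+}$ and $G \colon \matrices{\mathbb{Z}_+} \to \mathrm{MAT}$. For $F$, by the universal property of Definition~\ref{defn:universalprop} it suffices to choose images of the four generators that satisfy the ten equations. I would send $\mu \mapsto (1\ 1)$, $\eta \mapsto ()$, $\Delta \mapsto \binom{1}{1}$, $\epsilon \mapsto ()$, with the indicated dimensions in $\matrices{\mathbb{Z}_+}$. Verifying the monoid, comonoid, and unit/counit equations is a direct matrix calculation; the only equation with any content is the last bialgebra law, where both sides evaluate to the $2 \times 2$ all-ones matrix.

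For $G$, given $A \in M_{m,n}(\mathbb{Z}_+)$ I assign the normal form diagram displayed just before the proposition: on each input $i$ a white diamond of arity $\sum_j A_{j,i}$ copying it, then a permutation routing each copy to the output block it belongs to, then on each output $j$ a black diamond of arity $\sum_i A_{j,i}$ summing the incoming wires. Well-definedness (independence of the implicit orderings inside ``the'' generalized diamond) follows from (co)associativity and (co)commutativity. Functoriality with respect to composition reduces, via iterated application of the bialgebra equation, to the matrix-product formula $(BA)_{ki} = \sum_j B_{kj} A_{ji}$, while the tensor product corresponds to direct sum by inspection. The identity $F \circ G = \mathrm{id}$ is then immediate, since each wire of the normal form of $A$ contributes $+1$ to the corresponding matrix entry under $F$.

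The main obstacle is the other direction $G \circ F = \mathrm{id}$, equivalently the \emph{normal form theorem}: every morphism of $\mathrm{MAT}$ is provably equal, modulo the ten equations, to a diagram in ``copies, then permutation, then sums'' form. I would argue by induction on the size of the diagram. Monoid and comonoid axioms collapse any adjacent cluster of $\mu$'s (resp.\ $\Delta$'s) into a single generalized white (resp.\ black) diamond, absorbing $\eta$'s and $\epsilon$'s via the unit equations $\Delta \circ \eta = \eta \otimes \eta$, $\epsilon \circ \mu = \epsilon \otimes \epsilon$, $\epsilon \circ \eta = 1$. The crucial step, pushing every $\Delta$ past every $\mu$, is exactly the content of the last bialgebra equation applied atomically; it extends to generalized diamonds by a routine induction that produces a grid-shaped subdiagram plus a canonical permutation. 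Uniqueness of the normal form follows because the permutation between the diamonds, read modulo the ambient $S_k \times S_\ell$ symmetry granted by commutativity of $\mu$ and cocommutativity of $\Delta$, retains exactly one invariant per input-output pair $(i,j)$, namely the number of wires between them; this number coincides with $F$ applied to the diagram at entry $(j,i)$, so the normal form must be precisely the one built by $G$.
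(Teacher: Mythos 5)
Your proposal is correct and follows exactly the route the paper only gestures at: the paper states the normal form (generalized diamonds of one colour, then a permutation, then diamonds of the other colour) and otherwise defers the proof to Pirashvili as folklore, whereas you supply the actual argument --- the universal-property construction of $F$, the verification of the ten equations (with the bialgebra law giving the all-ones $2\times 2$ matrix on both sides), and the rewriting induction establishing the normal form. Two minor remarks: your closing uniqueness-of-normal-form step is dispensable, since once every diagram is provably equal to some $N_A$ and $F(N_A)=A$, bijectivity of $F$ follows at once without needing $G$ to be a functor or any invariant-counting; and your colour convention (white copies, black sums) is the reverse of Definition~\ref{def:matricesintegers}, where $\mu$ is the white diamond --- purely cosmetic, but worth aligning.
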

This proposition was probably proven first in \cite{pirash}, but is somewhat considered folklore.
The preceding example corresponds to the matrix
$\begin{pmatrix}
  0 & 1 & 0 & 0 & 1 \\
  0 & 0 & 0 & 0 & 0 \\
  0 & 0 & 0 & 0 & 1 \\
  0 & 2 & 1 & 0 & 0 \\
\end{pmatrix}  $.

This correspondence makes it easy to represent linear algebra equations by diagrams. The interested reader is urged to read the thesis by Zanasi \cite{Zanasi}, and the  \href{https://graphicallinearalgebra.net/}{blog of Pawel Sobocinski} on this particular subject.

There is another way to see this prop: morphisms are \emph{bipartite graphs} (with inputs on the left, and outputs on the right), composition plugs the inputs of the first graph into  the outputs of the second graph, deleting the  internal vertices by bursting them into edges.

This structure of monoid/comonoid with a bigebra law can be found in many domains of mathematics. One particularly important example is in $Fun_X$: one can take for $\mu$ and $\eta$ a monoid on $X$, and for $\Delta/\epsilon$ the \emph{copy comonoid}: $\Delta(x) = (x,x)$ (so the black diamond copies its input) and $\epsilon$ is the only function with codomain $1$ ($\epsilon$ forgets its input). The bigebra law then states the following: Supposons you want to do the product (for the monoid) of two inputs $x,y$ then duplicate the result. Then you can alternatively duplicate the outputs (to obtain two copies of $x$, and two copies of $y$), then do the initial product twice.

The reader fluent in prop and category theory will probably expect the article to proceed to \emph{Hopf algebras}, the canonical way to be able to matrices in $\mathbb{Z}$. 
This article goes however in a different direction.
First, we will deal with matrices in $\mathbb{Z}_+[t]$. For this we need to add a generator to code our new coefficient $t$.
This can be done in the following way:

\begin{proposition}
\label{def:matricespolynomial}  Consider the prop that contains the same generators and equations as before, with an additional generator $h: 1 \rightarrow 1$ that satifies the following equations:  
\[\begin{array}{lll}    
    {h\circ \mu=\mu\circ (h\otimes h)
 } &    {h\circ \eta=\eta
 }\\
    {(h\otimes h)\circ \Delta=\Delta\circ h
 } &    {\epsilon\circ h=\epsilon
 } \\
\end{array}
\]
In pictures:
\[\begin{array}{lll}    
    {\begin{tikzpicture}
\begin{pgfonlayer}{nodelayer}
\node [] (0) at (-0.000,-0.500) {};
\node [style=monoid] (1) at (-0.500,-0.500) {};
\node [shape=rectangle,draw,fill=white] (2) at (-1.000,-0.250) {};
\node [] (3) at (-1.500,-0.250) {};
\node [shape=rectangle,draw,fill=white] (4) at (-1.000,-0.750) {};
\node [] (5) at (-1.500,-0.750) {};
\node [label={[label position=center]=}] (6) at (-1.750,-0.500) {};
\node [] (7) at (-2.000,-0.500) {};
\node [shape=rectangle,draw,fill=white] (8) at (-2.500,-0.500) {};
\node [style=monoid] (9) at (-3.000,-0.500) {};
\node [] (10) at (-3.500,-0.250) {};
\node [] (11) at (-3.500,-0.750) {};
\end{pgfonlayer}
\begin{pgfonlayer}{edgelayer}
\draw[] (0.center) to[out=180, in=  0] (1.center);
\draw[] (1.center) to[out=135, in=  0] (2.center);
\draw[] (2.center) to[out=180, in=  0] (3.center);
\draw[] (1.center) to[out=-135, in=  0] (4.center);
\draw[] (4.center) to[out=180, in=  0] (5.center);
\draw[] (7.center) to[out=180, in=  0] (8.center);
\draw[] (8.center) to[out=180, in=  0] (9.center);
\draw[] (9.center) to[out=135, in=  0] (10.center);
\draw[] (9.center) to[out=-135, in=  0] (11.center);
\end{pgfonlayer}
\end{tikzpicture}
 } & &   {\begin{tikzpicture}
\begin{pgfonlayer}{nodelayer}
\node [] (0) at (-0.000,-0.250) {};
\node [style=monoid] (1) at (-0.500,-0.250) {};
\node [label={[label position=center]=}] (2) at (-1.000,-0.250) {};
\node [] (3) at (-1.250,-0.250) {};
\node [shape=rectangle,draw,fill=white] (4) at (-1.750,-0.250) {};
\node [style=monoid] (5) at (-2.250,-0.250) {};
\end{pgfonlayer}
\begin{pgfonlayer}{edgelayer}
\draw[] (0.center) to[out=180, in=  0] (1.center);
\draw[] (3.center) to[out=180, in=  0] (4.center);
\draw[] (4.center) to[out=180, in=  0] (5.center);
\end{pgfonlayer}
\end{tikzpicture}
 }\\
    \\
      {\begin{tikzpicture}
\begin{pgfonlayer}{nodelayer}
\node [] (0) at (-0.000,-0.250) {};
\node [] (1) at (-0.000,-0.750) {};
\node [style=bmonoid] (2) at (-0.500,-0.500) {};
\node [shape=rectangle,draw,fill=white] (3) at (-1.000,-0.500) {};
\node [] (4) at (-1.500,-0.500) {};
\node [label={[label position=center]=}] (5) at (-1.750,-0.500) {};
\node [] (6) at (-2.000,-0.250) {};
\node [shape=rectangle,draw,fill=white] (7) at (-2.500,-0.250) {};
\node [] (8) at (-2.000,-0.750) {};
\node [shape=rectangle,draw,fill=white] (9) at (-2.500,-0.750) {};
\node [style=bmonoid] (10) at (-3.000,-0.500) {};
\node [] (11) at (-3.500,-0.500) {};
\end{pgfonlayer}
\begin{pgfonlayer}{edgelayer}
\draw[] (0.center) to[out=180, in= 45] (2.center);
\draw[] (1.center) to[out=180, in=-44.9] (2.center);
\draw[] (2.center) to[out=180, in=  0] (3.center);
\draw[] (3.center) to[out=180, in=  0] (4.center);
\draw[] (6.center) to[out=180, in=  0] (7.center);
\draw[] (8.center) to[out=180, in=  0] (9.center);
\draw[] (7.center) to[out=180, in= 45] (10.center);
\draw[] (9.center) to[out=180, in=-44.9] (10.center);
\draw[] (10.center) to[out=180, in=  0] (11.center);
\end{pgfonlayer}
\end{tikzpicture}
 } & &   {\begin{tikzpicture}
\begin{pgfonlayer}{nodelayer}
\node [style=bmonoid] (0) at (-0.250,-0.250) {};
\node [] (1) at (-0.750,-0.250) {};
\node [label={[label position=center]=}] (2) at (-1.000,-0.250) {};
\node [style=bmonoid] (3) at (-1.500,-0.250) {};
\node [shape=rectangle,draw,fill=white] (4) at (-2.000,-0.250) {};
\node [] (5) at (-2.500,-0.250) {};
\end{pgfonlayer}
\begin{pgfonlayer}{edgelayer}
\draw[] (0.center) to[out=180, in=  0] (1.center);
\draw[] (3.center) to[out=180, in=  0] (4.center);
\draw[] (4.center) to[out=180, in=  0] (5.center);
\end{pgfonlayer}
\end{tikzpicture}
 } \\
\end{array}
\]
This prop is exactly the prop $\matrices{\mathbb{Z}_+[t]}$, the prop of matrices with coefficients in $\mathbb{Z}_+[t]$
\end{proposition}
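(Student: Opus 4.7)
The plan is to invoke the universal property of Definition \ref{defn:universalprop}: once I exhibit a prop in which both the original bialgebra generators and a specific $h$ satisfy all fourteen equations, I get a canonical prop functor from the presented prop to that target, and I only have to show it is an isomorphism.

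First I would extend the matrix interpretation used in the previous proposition: send $\mu,\eta,\Delta,\epsilon$ to the same $\mathbb{Z}_+$-matrices as before, now viewed inside $\matrices{\mathbb{Z}_+[t]}$, and send $h:1\to 1$ to the $1\times 1$ matrix $(t)$. The four new equations then reduce to elementary identities: for instance $h\circ \mu$ maps to $(t)\cdot(1,1)=(t,t)$, while $\mu\circ (h\otimes h)$ maps to $(1,1)\cdot\mathrm{diag}(t,t)=(t,t)$; the other three are equally routine consequences of distributivity of scalar multiplication over $+$ together with $t\cdot 0=0$. This yields a prop functor $F:P\to \matrices{\mathbb{Z}_+[t]}$.

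Surjectivity of $F$ is easy: every entry $\sum_k c_k t^k\in \mathbb{Z}_+[t]$ of a target matrix decomposes as a sum of monomials, and each monomial $c_k t^k$ is realised by taking the MAT-diagram for the corresponding scalar entry and inserting $k$ copies of $h$ in sequence on the associated internal wire. Disjoint monomials correspond to disjoint parallel wires, combined by $\Delta$ on the input side and $\mu$ on the output side.

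The main work is faithfulness, and I would handle it through a normal-form argument refining the MAT normal form shown in the excerpt (``white diamonds, permutation, black diamonds''). The four equations for $h$ say exactly that $h$ is a bialgebra homomorphism: it can be pulled rightward through $\mu$ by duplicating, absorbed into $\eta$, pulled leftward through $\Delta$ by duplicating, and absorbed into $\epsilon$. Using these as directed rewrites together with the ten bialgebra equations, any diagram can be brought into the form: copies ($\Delta$'s) on the inputs, then a collection of parallel wires each carrying some power $h^{\circ k}$, then a permutation, then merges ($\mu$'s) on the outputs. The multiset of exponents $k$ on wires running from input $j$ to output $i$ is then exactly the coefficient list of the polynomial $F(D)_{ij}\in \mathbb{Z}_+[t]$, so two normal forms with the same image under $F$ must literally agree as diagrams.

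The main obstacle is showing that this extended normal form is genuinely canonical: one must verify that pushing $h$'s through the bialgebra skeleton terminates and that the final multiset of powers on each input-output bundle is independent of the chosen rewriting order. The cleanest route, following the style of \cite{Zanasi,pirash}, is to avoid explicit confluence arguments and instead prove \emph{existence} of a normal form by induction on diagram size, then combine existence with the already-established isomorphism MAT $\cong \matrices{\mathbb{Z}_+}$ applied to the $h$-free skeleton to deduce that two normal forms mapping to the same polynomial matrix must be identical.
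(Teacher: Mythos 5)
Your proposal is correct and follows exactly the route the paper intends: the paper omits the proof entirely, remarking only that it is ``essentially similar to the previous proposition'', and that previous argument is precisely the normal-form technique (copy layer, permutation, merge layer, here enriched with powers of $h$ collected on the middle wires) that you spell out, together with the standard universal-property construction of the comparison functor. The only point worth tightening is the final uniqueness step: one should note explicitly that a polynomial in $\mathbb{Z}_+[t]$ determines its multiset of monomials uniquely (no cancellation in a positive semiring), so the map from normal forms to matrices is injective; with that remark your sketch is a complete proof of what the paper leaves unproved.
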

As an example, the following diagram:
\begin{center}
\begin{tikzpicture}[baseline=(current bounding box.center)]
\begin{pgfonlayer}{nodelayer}
\node [] (0) at (-0.000,-0.250) {};
\node [style=monoid] (1) at (-0.500,-0.250) {};
\node [] (2) at (-0.000,-1.250) {};
\node [style=monoid] (3) at (-0.500,-1.250) {};
\node [] (4) at (-0.000,-2.250) {};
\node [] (5) at (-0.563,-2.250) {};
\node [] (6) at (-0.000,-2.750) {};
\node [style=monoid] (7) at (-0.500,-2.750) {};
\node [] (8) at (-0.000,-3.500) {};
\node [style=monoid] (9) at (-0.500,-3.500) {};
\node [] (10) at (-1.250,-0.333) {};
\node [] (11) at (-1.250,-1.000) {};
\node [shape=rectangle,draw,fill=white] (12) at (-1.000,-1.667) {};
\node [shape=rectangle,draw,fill=white] (13) at (-1.500,-1.667) {};
\node [] (14) at (-1.250,-2.667) {};
\node [] (15) at (-1.250,-3.667) {};
\node [] (16) at (-2.000,-0.333) {};
\node [shape=rectangle,draw,fill=white] (17) at (-2.000,-1.000) {};
\node [] (18) at (-2.000,-2.000) {};
\node [] (19) at (-2.000,-3.333) {};
\node [] (20) at (-2.500,-0.333) {};
\node [] (21) at (-2.500,-1.333) {};
\node [] (22) at (-2.500,-2.667) {};
\node [] (23) at (-2.500,-3.667) {};
\node [] (24) at (-3.000,-0.333) {};
\node [shape=rectangle,draw,fill=white] (25) at (-3.000,-1.000) {};
\node [] (26) at (-3.000,-2.000) {};
\node [] (27) at (-3.000,-3.000) {};
\node [] (28) at (-3.000,-3.667) {};
\node [style=bmonoid] (29) at (-3.500,-0.286) {};
\node [] (30) at (-4.000,-0.286) {};
\node [style=bmonoid] (31) at (-3.500,-1.143) {};
\node [] (32) at (-4.000,-1.143) {};
\node [] (33) at (-3.438,-2.000) {};
\node [] (34) at (-4.000,-2.000) {};
\node [style=bmonoid] (35) at (-3.500,-3.143) {};
\node [] (36) at (-4.000,-3.143) {};
\end{pgfonlayer}
\begin{pgfonlayer}{edgelayer}
\draw[] (0.center) to[out=180, in=  0] (1.center);
\draw[] (2.center) to[out=180, in=  0] (3.center);
\draw[] (4.center) to[out=180, in=  0] (5.center);
\draw[] (6.center) to[out=180, in=  0] (7.center);
\draw[] (8.center) to[out=180, in=  0] (9.center);
\draw[] (3.center) to[out=117, in=  0] (10.center);
\draw[] (3.center) to[out=180, in=  0] (11.center);
\draw[] (3.center) to[out=-116, in=  0] (12.center);
\draw[] (12.center) to[out=180, in=  0] (13.center);
\draw[] (9.center) to[out=-135, in=  0] (15.center);
\draw[] (10.center) to[out=180, in=  0] (16.center);
\draw[] (11.center) to[out=180, in=  0] (17.center);
\draw[] (16.center) to[out=180, in=  0] (20.center);
\draw[] (5.center) to[out=180, in= 45] (14.center) to[out=-135, in= 45] (19.center) to[out=-135, in=  0] (23.center);
\draw[] (20.center) to[out=180, in=  0] (24.center);
\draw[] (9.center) to[out=135, in=-44.9] (14.center) to[out=135, in=-44.9] (18.center) to[out=135, in=-44.9] (21.center) to[out=135, in=  0] (25.center);
\draw[] (13.center) to[out=180, in= 45] (18.center) to[out=-135, in= 45] (22.center) to[out=-135, in=  0] (27.center);
\draw[] (23.center) to[out=180, in=  0] (28.center);
\draw[] (24.center) to[out=180, in=29.7] (29.center);
\draw[] (25.center) to[out=180, in=-29.6] (29.center);
\draw[] (29.center) to[out=180, in=  0] (30.center);
\draw[] (31.center) to[out=180, in=  0] (32.center);
\draw[] (15.center) to[out=180, in=-44.9] (19.center) to[out=135, in=-44.9] (22.center) to[out=135, in=-44.9] (26.center) to[out=135, in=  0] (33.center);
\draw[] (33.center) to[out=180, in=  0] (34.center);
\draw[] (17.center) to[out=180, in= 45] (21.center) to[out=-135, in= 45] (26.center) to[out=-135, in=66.4] (35.center);
\draw[] (27.center) to[out=180, in=  0] (35.center);
\draw[] (28.center) to[out=180, in=-66.3] (35.center);
\draw[] (35.center) to[out=180, in=  0] (36.center);
\end{pgfonlayer}
\end{tikzpicture}

\end{center}
correspond to the matrix
$\begin{pmatrix}
  0 & 1 & 0 & 0 & t \\
  0 & 0 & 0 & 0 & 0 \\
  0 & 0 & 0 & 0 & 1 \\
  0 & t^2 + t & 1 & 0 & 0 \\
\end{pmatrix}  $.
We do not prove this result, as the proof is essentially similar to the previous proposition.

We finish this section by a few examples and questions that are actually not used in the following, but might interest the reader coming from symbolic dynamics and/or computability theory

\begin{example}
  Consider the prop with two generators linked by the following equations:
  \begin{center}
{\begin{tikzpicture}[baseline=(current bounding box.center)]
\begin{pgfonlayer}{nodelayer}
\node [] (0) at (-0.000,-0.250) {};
\node [] (1) at (-0.375,-0.250) {};
\node [] (2) at (-0.750,-0.250) {};
\node [] (3) at (-0.000,-0.750) {};
\node [] (4) at (-0.375,-0.750) {};
\node [] (5) at (-0.750,-0.750) {};
\node [label={[label position=center]=}] (6) at (-1.000,-0.500) {};
\node [] (7) at (-1.250,-0.250) {};
\node [] (8) at (-1.250,-0.750) {};
\node [style=monoid] (9) at (-1.750,-0.500) {};
\node [style=monoid] (10) at (-2.250,-0.500) {};
\node [] (11) at (-2.750,-0.250) {};
\node [] (12) at (-2.750,-0.750) {};
\end{pgfonlayer}
\begin{pgfonlayer}{edgelayer}
\draw[] (0.center) to[out=180, in=  0] (1.center);
\draw[] (1.center) to[out=180, in=  0] (2.center);
\draw[] (3.center) to[out=180, in=  0] (4.center);
\draw[] (4.center) to[out=180, in=  0] (5.center);
\draw[] (7.center) to[out=180, in= 45] (9.center);
\draw[] (8.center) to[out=180, in=-44.9] (9.center);
\draw[] (9.center) to[out=180, in=  0] (10.center);
\draw[] (10.center) to[out=135, in=  0] (11.center);
\draw[] (10.center) to[out=-135, in=  0] (12.center);
\end{pgfonlayer}
\end{tikzpicture}
 }\hspace{5cm}   {\begin{tikzpicture}[baseline=(current bounding box.center)]
\begin{pgfonlayer}{nodelayer}
\node [] (0) at (-0.000,-0.250) {};
\node [] (1) at (-0.375,-0.250) {};
\node [] (2) at (-0.750,-0.250) {};
\node [label={[label position=center]=}] (3) at (-1.000,-0.250) {};
\node [] (4) at (-1.250,-0.250) {};
\node [style=monoid] (5) at (-1.750,-0.250) {};
\node [style=monoid] (6) at (-2.250,-0.250) {};
\node [] (7) at (-2.750,-0.250) {};
\end{pgfonlayer}
\begin{pgfonlayer}{edgelayer}
\draw[] (0.center) to[out=180, in=  0] (1.center);
\draw[] (1.center) to[out=180, in=  0] (2.center);
\draw[] (4.center) to[out=180, in=  0] (5.center);
\draw[] (5.center) to[out=153, in=26.6] (6.center);
\draw[] (5.center) to[out=-153, in=-26.5] (6.center);
\draw[] (6.center) to[out=180, in=  0] (7.center);
\end{pgfonlayer}
\end{tikzpicture}
 } 
\end{center}
  Then one can prove easily that the morphisms $1\rightarrow 1$ of this prop corresponds exactly to Thompson's group $V$ \cite{CannonFloydParry}. This will be immediate for the reader familiar with this family of groups as the pictures we obtained here are very similar to the diagrams used to express elements of the group, see \cite{Belk}.

  There is also a notion of a pro, which is a prop without the symmetry $\sigma$. If we delete the symmetry $\sigma$ from the definition, then one can see that the morphisms $1 \rightarrow 1$ of this pro now corresponds exactly to Thompson's group $F$.
\end{example}  

\begin{example}
  Consider the prop with two generators linked by the following equations:
  \hfill    {\begin{tikzpicture}[baseline=(current bounding box.center)]
\begin{pgfonlayer}{nodelayer}
\node [] (0) at (-0.000,-0.250) {};
\node [] (1) at (-0.375,-0.250) {};
\node [] (2) at (-0.750,-0.250) {};
\node [] (3) at (-0.000,-0.750) {};
\node [] (4) at (-0.375,-0.750) {};
\node [] (5) at (-0.750,-0.750) {};
\node [label={[label position=center]=}] (6) at (-1.000,-0.500) {};
\node [] (7) at (-1.250,-0.250) {};
\node [] (8) at (-1.250,-0.750) {};
\node [style=monoid] (9) at (-1.750,-0.500) {};
\node [style=monoid] (10) at (-2.250,-0.500) {};
\node [] (11) at (-2.750,-0.250) {};
\node [] (12) at (-2.750,-0.750) {};
\end{pgfonlayer}
\begin{pgfonlayer}{edgelayer}
\draw[] (0.center) to[out=180, in=  0] (1.center);
\draw[] (1.center) to[out=180, in=  0] (2.center);
\draw[] (3.center) to[out=180, in=  0] (4.center);
\draw[] (4.center) to[out=180, in=  0] (5.center);
\draw[] (7.center) to[out=180, in= 45] (9.center);
\draw[] (8.center) to[out=180, in=-44.9] (9.center);
\draw[] (9.center) to[out=180, in=  0] (10.center);
\draw[] (10.center) to[out=135, in=  0] (11.center);
\draw[] (10.center) to[out=-135, in=  0] (12.center);
\end{pgfonlayer}
\end{tikzpicture}
 }\hfill   {\begin{tikzpicture}[baseline=(current bounding box.center)]
\begin{pgfonlayer}{nodelayer}
\node [] (0) at (-0.000,-0.250) {};
\node [] (1) at (-0.375,-0.250) {};
\node [] (2) at (-0.750,-0.250) {};
\node [label={[label position=center]=}] (3) at (-1.000,-0.250) {};
\node [] (4) at (-1.250,-0.250) {};
\node [style=monoid] (5) at (-1.750,-0.250) {};
\node [style=monoid] (6) at (-2.250,-0.250) {};
\node [] (7) at (-2.750,-0.250) {};
\end{pgfonlayer}
\begin{pgfonlayer}{edgelayer}
\draw[] (0.center) to[out=180, in=  0] (1.center);
\draw[] (1.center) to[out=180, in=  0] (2.center);
\draw[] (4.center) to[out=180, in=  0] (5.center);
\draw[] (5.center) to[out=153, in=26.6] (6.center);
\draw[] (5.center) to[out=-153, in=-26.5] (6.center);
\draw[] (6.center) to[out=180, in=  0] (7.center);
\end{pgfonlayer}
\end{tikzpicture}
 } \hfill
  \hfill    {\begin{tikzpicture}[baseline=(current bounding box.center)]
\begin{pgfonlayer}{nodelayer}
\node [] (0) at (-0.000,-0.250) {};
\node [] (1) at (-0.375,-0.250) {};
\node [] (2) at (-0.750,-0.250) {};
\node [] (3) at (-0.000,-0.750) {};
\node [] (4) at (-0.375,-0.750) {};
\node [] (5) at (-0.750,-0.750) {};
\node [label={[label position=center]=}] (6) at (-1.000,-0.500) {};
\node [] (7) at (-1.250,-0.250) {};
\node [] (8) at (-1.250,-0.750) {};
\node [style=bmonoid] (9) at (-1.750,-0.500) {};
\node [style=bmonoid] (10) at (-2.250,-0.500) {};
\node [] (11) at (-2.750,-0.250) {};
\node [] (12) at (-2.750,-0.750) {};
\end{pgfonlayer}
\begin{pgfonlayer}{edgelayer}
\draw[] (0.center) to[out=180, in=  0] (1.center);
\draw[] (1.center) to[out=180, in=  0] (2.center);
\draw[] (3.center) to[out=180, in=  0] (4.center);
\draw[] (4.center) to[out=180, in=  0] (5.center);
\draw[] (7.center) to[out=180, in= 45] (9.center);
\draw[] (8.center) to[out=180, in=-44.9] (9.center);
\draw[] (9.center) to[out=180, in=  0] (10.center);
\draw[] (10.center) to[out=135, in=  0] (11.center);
\draw[] (10.center) to[out=-135, in=  0] (12.center);
\end{pgfonlayer}
\end{tikzpicture}
 }\hfill   {\begin{tikzpicture}[baseline=(current bounding box.center)]
\begin{pgfonlayer}{nodelayer}
\node [] (0) at (-0.000,-0.250) {};
\node [] (1) at (-0.375,-0.250) {};
\node [] (2) at (-0.750,-0.250) {};
\node [label={[label position=center]=}] (3) at (-1.000,-0.250) {};
\node [] (4) at (-1.250,-0.250) {};
\node [style=bmonoid] (5) at (-1.750,-0.250) {};
\node [style=bmonoid] (6) at (-2.250,-0.250) {};
\node [] (7) at (-2.750,-0.250) {};
\end{pgfonlayer}
\begin{pgfonlayer}{edgelayer}
\draw[] (0.center) to[out=180, in=  0] (1.center);
\draw[] (1.center) to[out=180, in=  0] (2.center);
\draw[] (4.center) to[out=180, in=  0] (5.center);
\draw[] (5.center) to[out=153, in=26.6] (6.center);
\draw[] (5.center) to[out=-153, in=-26.5] (6.center);
\draw[] (6.center) to[out=180, in=  0] (7.center);
\end{pgfonlayer}
\end{tikzpicture}
 } \hfill
  \hfill    {\begin{tikzpicture}[baseline=(current bounding box.center)]
\begin{pgfonlayer}{nodelayer}
\node [] (0) at (-0.000,-0.500) {};
\node [] (1) at (-0.000,-1.500) {};
\node [style=bmonoid] (2) at (-0.500,-1.000) {};
\node [style=monoid] (3) at (-1.000,-1.000) {};
\node [] (4) at (-1.500,-0.500) {};
\node [] (5) at (-1.500,-1.500) {};
\node [label={[label position=center]=}] (6) at (-1.750,-1.000) {};
\node [] (7) at (-2.000,-0.500) {};
\node [style=monoid] (8) at (-2.500,-0.500) {};
\node [] (9) at (-2.000,-1.500) {};
\node [style=monoid] (10) at (-2.500,-1.500) {};
\node [] (11) at (-3.000,-0.250) {};
\node [] (12) at (-3.000,-1.000) {};
\node [] (13) at (-3.000,-1.750) {};
\node [style=bmonoid] (14) at (-3.500,-0.500) {};
\node [] (15) at (-4.000,-0.500) {};
\node [style=bmonoid] (16) at (-3.500,-1.500) {};
\node [] (17) at (-4.000,-1.500) {};
\end{pgfonlayer}
\begin{pgfonlayer}{edgelayer}
\draw[] (0.center) to[out=180, in=63.4] (2.center);
\draw[] (1.center) to[out=180, in=-63.3] (2.center);
\draw[] (2.center) to[out=180, in=  0] (3.center);
\draw[] (3.center) to[out=117, in=  0] (4.center);
\draw[] (3.center) to[out=-116, in=  0] (5.center);
\draw[] (7.center) to[out=180, in=  0] (8.center);
\draw[] (9.center) to[out=180, in=  0] (10.center);
\draw[] (8.center) to[out=135, in=  0] (11.center);
\draw[] (10.center) to[out=-135, in=  0] (13.center);
\draw[] (11.center) to[out=180, in= 45] (14.center);
\draw[] (10.center) to[out=135, in=-44.9] (12.center) to[out=135, in=-44.9] (14.center);
\draw[] (14.center) to[out=180, in=  0] (15.center);
\draw[] (8.center) to[out=-135, in= 45] (12.center) to[out=-135, in= 45] (16.center);
\draw[] (13.center) to[out=180, in=-44.9] (16.center);
\draw[] (16.center) to[out=180, in=  0] (17.center);
\end{pgfonlayer}
\end{tikzpicture}
 } \hfill

  Then one can prove that the morphisms $1\rightarrow 1$ of this prop corresponds exactly to the invertible generalized shift maps of Moore \cite{Moore} over an alphabet of size $2$ (or of Thompson's group $2V$). Equivalently, one can think of morphism $n \rightarrow m$ as an atomic movement of a reversible one-tape Turing machine on an alphabet of size $2$. The $n$ different inputs correspond to the $n$ possible initial state of the Turing machine, and the $m$ output of the state at the next step (usually we take $m = n$).

  The idea is to consider the generators as acting on two binfinite stacks of symbols 0 and 1. The white symbols correspond respectively to the push and pop operations on the first stack.
  The pop operation splits the input into two different futures: the future where the symbol that was popped is a 0 (say, taking the wire on top), and the future where the symbol that was popped is a 1 (taking the wire on the bottom).

  Using two stacks, one can simulate a Turing machine. It is then an exercise to code  the ``shift'' operation by using one pop on one stack and one push on the other stack.
\end{example}

\begin{remark}
  Let $M$ be a finitely presented monoid. Then is it easy to find a prop s.t. the morphisms $1 \rightarrow 1$ of the prop corresponds with the monoid $M$: just put one generator of the prop per generator of the monoid, and one equation in the prop per equation of the monoid.

  Conversely, is it always true that the $1\rightarrow 1$ morphisms of a finitely presented prop form a finitely presented monoid ?
  It is obvious that they form a  recursively presented monoid.

  More generally, consider a prop with a finite number of generators, and a recursive set of equations. Can we always reduce the set of equations to a finite one, at the price of adding some generators and only considering the ``subprop'' generated by the original generators ? This is the equivalent of the Higman embedding theorems for groups, and an open question asked by Lawvere.  
\end{remark}  

\section{Traced props}

\subsection{Introduction and statements of the theorems}
From the time they were given birth, every computer scientist that sees circuits has only one natural urge: to  plug some output back to an input, which would lead to a feedback loop, or to fixed point, depending on the context. From the point of view of category theory, this action is called \emph{tracing}.

We will now consider traced props, which are props with an additional operator(s): the trace. Intuitively, the operator $tr_k$ (trace $k$) is plugging the $k$ first outputs into the $k$ first inputs.
This operator will be represented graphically as follows:

\begin{tikzpicture}[baseline=(current bounding box.center)]
\begin{pgfonlayer}{nodelayer}
\node [] (0) at (-0.750,-1.000) {};
\node [] (1) at (-0.000,-2.250) {};
\node [] (2) at (-0.500,-2.250) {};
\node [] (3) at (-0.000,-2.750) {};
\node [] (4) at (-0.500,-2.750) {};
\node [] (5) at (-0.000,-3.250) {};
\node [] (6) at (-0.500,-3.250) {};
\node [] (7) at (-1.250,-0.750) {};
\node [] (8) at (-1.250,-1.250) {};
\node [] (9) at (-2.750,-0.812) {};
\node [] (10) at (-2.750,-1.375) {};
\path[dashed=None,draw=,color=black] (-1.750,-1.000) rectangle (-2.250,-3.500);
\node at (-2.000,-2.250) {};
\node [] (11) at (-2.000,-0.250) {};
\node [] (12) at (-2.000,-0.750) {};
\node [shape=rectangle,fill=white,draw,label={[label position=center]$f$},inner xsep=0.1875cm,inner ysep=1.125cm] (13) at (-2.000,-2.250) {};
\node [] (14) at (-2.000,-1.312) {};
\node [] (15) at (-2.000,-1.937) {};
\node [] (16) at (-2.000,-2.562) {};
\node [] (17) at (-2.000,-3.188) {};
\node [] (18) at (-2.000,-1.250) {};
\node [] (19) at (-2.000,-1.750) {};
\node [] (20) at (-2.000,-2.250) {};
\node [] (21) at (-2.000,-2.750) {};
\node [] (22) at (-2.000,-3.250) {};
\node [] (23) at (-3.250,-1.167) {};
\node [] (24) at (-3.500,-2.625) {};
\node [] (25) at (-4.000,-2.625) {};
\node [] (26) at (-3.500,-3.208) {};
\node [] (27) at (-4.000,-3.208) {};
\end{pgfonlayer}
\begin{pgfonlayer}{edgelayer}
\draw[] (7.center) to[out=-89.9, in=  0] (18.center);
\draw[] (8.center) to[out=-89.9, in=  0] (19.center);
\draw[] (1.center) to[out=180, in=  0] (2.center) to[out=180, in=  0] (20.center);
\draw[] (3.center) to[out=180, in=  0] (4.center) to[out=180, in=  0] (21.center);
\draw[] (5.center) to[out=180, in=  0] (6.center) to[out=180, in=  0] (22.center);
\draw[] (14.center) to[out=180, in=-89.9] (9.center) to[out= 90, in=180] (11.center) to[out=  0, in= 90] (7.center);
\draw[] (15.center) to[out=180, in=-89.9] (10.center) to[out= 90, in=180] (12.center) to[out=  0, in= 90] (8.center);
\draw[] (16.center) to[out=180, in=  2] (24.center) to[out=-178, in=  0] (25.center);
\draw[] (17.center) to[out=180, in=  1] (26.center) to[out=-179, in=  0] (27.center);
\end{pgfonlayer}
\end{tikzpicture}

The dashed rectangle represents on which morphism the trace is applied. Once the equations of a traced prop are given, it will no longer be essential to represent this dashed rectangle, as all ways of understanding the diagram will give the same morphism:

\begin{definition}[traced prop]
\label{def:tracedprop}  
  
  A traced prop is a prop that contains an operator $tr$ that transforms a morphism $m+1 \rightarrow n+1$ into a morphism $m\rightarrow n$ and that satifies the following five equations, where $tr_p$ represents the $p$-th iterate of the operator (with $tr_0(f) = f)$. 
  \begin{itemize}
  \item Tightening: $tr \left((\mathrm{id}\otimes g)\circ f\circ (\mathrm{id}\otimes h)\right)=g\circ tr \left(f\right)\circ h
    $
  \item Yanking:$tr \left(\sigma\right)=\mathrm{id}
    $
  \item Sliding:$tr_m \left(f\circ (g\otimes id^{\otimes p})\right)=tr_n \left((g\otimes id^{\otimes q})\circ f\right)
    $ for $f: p+m \to q+n$ and $g: n\to m$
 \item Strength:$tr \left(f\otimes g\right)=tr \left(f\right)\otimes g
    $     
  \end{itemize}
  In pictures:
  
  \begin{tikzpicture}[baseline=(current bounding box.center)]
\begin{pgfonlayer}{nodelayer}
\node [] (0) at (-0.750,-0.500) {};
\node [] (1) at (-0.000,-1.250) {};
\node [] (2) at (-0.000,-1.750) {};
\node [shape=rectangle,fill=white,draw,label={[label position=center]$g$},inner xsep=0.1875cm,inner ysep=0.375cm] (3) at (-0.500,-1.500) {};
\node [] (4) at (-0.500,-1.250) {};
\node [] (5) at (-0.500,-1.750) {};
\node [] (6) at (-0.500,-1.250) {};
\node [] (7) at (-0.500,-1.750) {};
\node [] (8) at (-1.250,-0.500) {};
\node [] (9) at (-2.750,-0.500) {};
\path[dashed=None,draw=,color=black] (-1.750,-0.500) rectangle (-2.250,-2.000);
\node at (-2.000,-1.250) {};
\node [] (10) at (-2.000,-0.250) {};
\node [shape=rectangle,fill=white,draw,label={[label position=center]$f$},inner xsep=0.1875cm,inner ysep=0.625cm] (11) at (-2.000,-1.250) {};
\node [] (12) at (-2.000,-0.750) {};
\node [] (13) at (-2.000,-1.250) {};
\node [] (14) at (-2.000,-1.750) {};
\node [] (15) at (-2.000,-0.750) {};
\node [] (16) at (-2.000,-1.250) {};
\node [] (17) at (-2.000,-1.750) {};
\node [] (18) at (-3.250,-0.500) {};
\node [shape=rectangle,fill=white,draw,label={[label position=center]$h$},inner xsep=0.1875cm,inner ysep=0.375cm] (19) at (-3.500,-1.500) {};
\node [] (20) at (-3.500,-1.250) {};
\node [] (21) at (-3.500,-1.750) {};
\node [] (22) at (-3.500,-1.250) {};
\node [] (23) at (-3.500,-1.750) {};
\node [] (24) at (-4.000,-1.250) {};
\node [] (25) at (-4.000,-1.750) {};
\node [label={[label position=center]=}] (26) at (-4.250,-1.000) {};
\node [] (27) at (-4.750,-0.500) {};
\node [] (28) at (-4.500,-1.250) {};
\node [] (29) at (-4.500,-1.750) {};
\node [] (30) at (-5.250,-0.500) {};
\node [] (31) at (-7.750,-0.500) {};
\path[dashed=None,draw=,color=black] (-5.750,-0.500) rectangle (-7.250,-2.000);
\node at (-6.500,-1.250) {};
\node [] (32) at (-6.500,-0.250) {};
\node [] (33) at (-6.000,-0.750) {};
\node [shape=rectangle,fill=white,draw,label={[label position=center]$g$},inner xsep=0.1875cm,inner ysep=0.375cm] (34) at (-6.000,-1.500) {};
\node [] (35) at (-6.000,-1.250) {};
\node [] (36) at (-6.000,-1.750) {};
\node [] (37) at (-6.000,-1.250) {};
\node [] (38) at (-6.000,-1.750) {};
\node [shape=rectangle,fill=white,draw,label={[label position=center]$f$},inner xsep=0.1875cm,inner ysep=0.625cm] (39) at (-6.500,-1.250) {};
\node [] (40) at (-6.500,-0.750) {};
\node [] (41) at (-6.500,-1.250) {};
\node [] (42) at (-6.500,-1.750) {};
\node [] (43) at (-6.500,-0.750) {};
\node [] (44) at (-6.500,-1.250) {};
\node [] (45) at (-6.500,-1.750) {};
\node [] (46) at (-7.000,-0.750) {};
\node [shape=rectangle,fill=white,draw,label={[label position=center]$h$},inner xsep=0.1875cm,inner ysep=0.375cm] (47) at (-7.000,-1.500) {};
\node [] (48) at (-7.000,-1.250) {};
\node [] (49) at (-7.000,-1.750) {};
\node [] (50) at (-7.000,-1.250) {};
\node [] (51) at (-7.000,-1.750) {};
\node [] (52) at (-8.250,-0.500) {};
\node [] (53) at (-8.500,-1.250) {};
\node [] (54) at (-8.500,-1.750) {};
\end{pgfonlayer}
\begin{pgfonlayer}{edgelayer}
\draw[] (1.center) to[out=180, in=  0] (6.center);
\draw[] (2.center) to[out=180, in=  0] (7.center);
\draw[] (8.center) to[out=-89.9, in=  0] (15.center);
\draw[] (4.center) to[out=180, in=  0] (16.center);
\draw[] (5.center) to[out=180, in=  0] (17.center);
\draw[] (12.center) to[out=180, in=-89.9] (9.center) to[out= 90, in=180] (10.center) to[out=  0, in= 90] (8.center);
\draw[] (13.center) to[out=180, in=  0] (22.center);
\draw[] (14.center) to[out=180, in=  0] (23.center);
\draw[] (20.center) to[out=180, in=  0] (24.center);
\draw[] (21.center) to[out=180, in=  0] (25.center);
\draw[] (30.center) to[out=-89.9, in=  0] (33.center);
\draw[] (28.center) to[out=180, in=  0] (37.center);
\draw[] (29.center) to[out=180, in=  0] (38.center);
\draw[] (33.center) to[out=180, in=  0] (43.center);
\draw[] (35.center) to[out=180, in=  0] (44.center);
\draw[] (36.center) to[out=180, in=  0] (45.center);
\draw[] (40.center) to[out=180, in=  0] (46.center);
\draw[] (41.center) to[out=180, in=  0] (50.center);
\draw[] (42.center) to[out=180, in=  0] (51.center);
\draw[] (46.center) to[out=180, in=-89.9] (31.center) to[out= 90, in=180] (32.center) to[out=  0, in= 90] (30.center);
\draw[] (48.center) to[out=180, in=  0] (53.center);
\draw[] (49.center) to[out=180, in=  0] (54.center);
\end{pgfonlayer}
\end{tikzpicture}

  \begin{tikzpicture}[baseline=(current bounding box.center)]
\begin{pgfonlayer}{nodelayer}
\node [] (0) at (-0.000,-0.750) {};
\node [] (1) at (-0.375,-0.750) {};
\node [] (2) at (-0.750,-0.750) {};
\node [label={[label position=center]=}] (3) at (-1.000,-0.750) {};
\node [] (4) at (-1.500,-0.500) {};
\node [] (5) at (-1.250,-1.250) {};
\node [] (6) at (-2.000,-0.500) {};
\node [] (7) at (-3.500,-0.500) {};
\path[dashed=None,draw=,color=black] (-2.500,-0.500) rectangle (-3.000,-1.500);
\node at (-2.750,-1.000) {};
\node [] (8) at (-2.750,-0.250) {};
\node [] (9) at (-2.750,-1.000) {};
\node [] (10) at (-4.000,-0.500) {};
\node [] (11) at (-4.250,-1.250) {};
\end{pgfonlayer}
\begin{pgfonlayer}{edgelayer}
\draw[] (0.center) to[out=180, in=  0] (1.center);
\draw[] (1.center) to[out=180, in=  0] (2.center);
\draw[] (5.center) to[out=180, in=-44.9] (9.center) to[out=135, in=-89.9] (7.center) to[out= 90, in=180] (8.center) to[out=  0, in= 90] (6.center);
\draw[] (6.center) to[out=-89.9, in= 45] (9.center) to[out=-135, in=  0] (11.center);
\end{pgfonlayer}
\end{tikzpicture}

  \begin{tikzpicture}[baseline=(current bounding box.center)]
\begin{pgfonlayer}{nodelayer}
\node [] (0) at (-0.250,-1.000) {};
\node [] (1) at (-0.000,-2.250) {};
\node [] (2) at (-0.000,-2.750) {};
\node [] (3) at (-0.750,-0.750) {};
\node [] (4) at (-0.750,-1.250) {};
\node [] (5) at (-2.750,-0.750) {};
\node [] (6) at (-2.750,-1.250) {};
\path[dashed=None,draw=,color=black] (-1.250,-1.000) rectangle (-2.250,-3.000);
\node at (-1.750,-2.000) {};
\node [] (7) at (-1.750,-0.250) {};
\node [] (8) at (-1.750,-0.750) {};
\node [shape=rectangle,fill=white,draw,label={[label position=center]$g$},inner xsep=0.1875cm,inner ysep=0.375cm] (9) at (-1.500,-1.500) {};
\node [] (10) at (-1.500,-1.250) {};
\node [] (11) at (-1.500,-1.750) {};
\node [] (12) at (-1.500,-1.250) {};
\node [] (13) at (-1.500,-1.750) {};
\node [] (14) at (-1.500,-2.250) {};
\node [] (15) at (-1.500,-2.750) {};
\node [shape=rectangle,fill=white,draw,label={[label position=center]$f$},inner xsep=0.1875cm,inner ysep=0.875cm] (16) at (-2.000,-2.000) {};
\node [] (17) at (-2.000,-1.250) {};
\node [] (18) at (-2.000,-1.750) {};
\node [] (19) at (-2.000,-2.250) {};
\node [] (20) at (-2.000,-2.750) {};
\node [] (21) at (-2.000,-1.250) {};
\node [] (22) at (-2.000,-1.750) {};
\node [] (23) at (-2.000,-2.250) {};
\node [] (24) at (-2.000,-2.750) {};
\node [] (25) at (-3.250,-1.000) {};
\node [] (26) at (-3.500,-2.250) {};
\node [] (27) at (-3.500,-2.750) {};
\node [label={[label position=center]=}] (28) at (-3.750,-1.500) {};
\node [] (29) at (-4.250,-1.000) {};
\node [] (30) at (-4.000,-2.250) {};
\node [] (31) at (-4.000,-2.750) {};
\node [] (32) at (-4.750,-0.750) {};
\node [] (33) at (-4.750,-1.250) {};
\node [] (34) at (-6.750,-0.750) {};
\node [] (35) at (-6.750,-1.250) {};
\path[dashed=None,draw=,color=black] (-5.250,-1.000) rectangle (-6.250,-3.000);
\node at (-5.750,-2.000) {};
\node [] (36) at (-5.750,-0.250) {};
\node [] (37) at (-5.750,-0.750) {};
\node [shape=rectangle,fill=white,draw,label={[label position=center]$f$},inner xsep=0.1875cm,inner ysep=0.875cm] (38) at (-5.500,-2.000) {};
\node [] (39) at (-5.500,-1.250) {};
\node [] (40) at (-5.500,-1.750) {};
\node [] (41) at (-5.500,-2.250) {};
\node [] (42) at (-5.500,-2.750) {};
\node [] (43) at (-5.500,-1.250) {};
\node [] (44) at (-5.500,-1.750) {};
\node [] (45) at (-5.500,-2.250) {};
\node [] (46) at (-5.500,-2.750) {};
\node [shape=rectangle,fill=white,draw,label={[label position=center]$g$},inner xsep=0.1875cm,inner ysep=0.375cm] (47) at (-6.000,-1.500) {};
\node [] (48) at (-6.000,-1.250) {};
\node [] (49) at (-6.000,-1.750) {};
\node [] (50) at (-6.000,-1.250) {};
\node [] (51) at (-6.000,-1.750) {};
\node [] (52) at (-6.000,-2.250) {};
\node [] (53) at (-6.000,-2.750) {};
\node [] (54) at (-7.250,-1.000) {};
\node [] (55) at (-7.500,-2.250) {};
\node [] (56) at (-7.500,-2.750) {};
\end{pgfonlayer}
\begin{pgfonlayer}{edgelayer}
\draw[] (3.center) to[out=-89.9, in=  0] (12.center);
\draw[] (4.center) to[out=-89.9, in=  0] (13.center);
\draw[] (1.center) to[out=180, in=  0] (14.center);
\draw[] (2.center) to[out=180, in=  0] (15.center);
\draw[] (10.center) to[out=180, in=  0] (21.center);
\draw[] (11.center) to[out=180, in=  0] (22.center);
\draw[] (14.center) to[out=180, in=  0] (23.center);
\draw[] (15.center) to[out=180, in=  0] (24.center);
\draw[] (17.center) to[out=180, in=-89.9] (5.center) to[out= 90, in=180] (7.center) to[out=  0, in= 90] (3.center);
\draw[] (18.center) to[out=180, in=-89.9] (6.center) to[out= 90, in=180] (8.center) to[out=  0, in= 90] (4.center);
\draw[] (19.center) to[out=180, in=  0] (26.center);
\draw[] (20.center) to[out=180, in=  0] (27.center);
\draw[] (32.center) to[out=-89.9, in=  0] (43.center);
\draw[] (33.center) to[out=-89.9, in=  0] (44.center);
\draw[] (30.center) to[out=180, in=  0] (45.center);
\draw[] (31.center) to[out=180, in=  0] (46.center);
\draw[] (39.center) to[out=180, in=  0] (50.center);
\draw[] (40.center) to[out=180, in=  0] (51.center);
\draw[] (41.center) to[out=180, in=  0] (52.center);
\draw[] (42.center) to[out=180, in=  0] (53.center);
\draw[] (48.center) to[out=180, in=-89.9] (34.center) to[out= 90, in=180] (36.center) to[out=  0, in= 90] (32.center);
\draw[] (49.center) to[out=180, in=-89.9] (35.center) to[out= 90, in=180] (37.center) to[out=  0, in= 90] (33.center);
\draw[] (52.center) to[out=180, in=  0] (55.center);
\draw[] (53.center) to[out=180, in=  0] (56.center);
\end{pgfonlayer}
\end{tikzpicture}

 \begin{tikzpicture}[baseline=(current bounding box.center)]
\begin{pgfonlayer}{nodelayer}
\node [] (0) at (-0.250,-0.500) {};
\node [] (1) at (-0.000,-1.250) {};
\node [] (2) at (-0.000,-1.750) {};
\node [] (3) at (-0.000,-2.250) {};
\node [] (4) at (-0.000,-2.750) {};
\node [] (5) at (-0.750,-0.500) {};
\node [] (6) at (-2.250,-0.500) {};
\path[dashed=None,draw=,color=black] (-1.250,-0.500) rectangle (-1.750,-2.000);
\node at (-1.500,-1.250) {};
\node [] (7) at (-1.500,-0.250) {};
\node [shape=rectangle,fill=white,draw,label={[label position=center]$f$},inner xsep=0.1875cm,inner ysep=0.625cm] (8) at (-1.500,-1.250) {};
\node [] (9) at (-1.500,-0.750) {};
\node [] (10) at (-1.500,-1.250) {};
\node [] (11) at (-1.500,-1.750) {};
\node [] (12) at (-1.500,-0.750) {};
\node [] (13) at (-1.500,-1.250) {};
\node [] (14) at (-1.500,-1.750) {};
\node [shape=rectangle,fill=white,draw,label={[label position=center]$g$},inner xsep=0.1875cm,inner ysep=0.375cm] (15) at (-1.500,-2.500) {};
\node [] (16) at (-1.500,-2.250) {};
\node [] (17) at (-1.500,-2.750) {};
\node [] (18) at (-1.500,-2.250) {};
\node [] (19) at (-1.500,-2.750) {};
\node [] (20) at (-2.750,-0.500) {};
\node [] (21) at (-3.000,-1.250) {};
\node [] (22) at (-3.000,-1.750) {};
\node [] (23) at (-3.000,-2.250) {};
\node [] (24) at (-3.000,-2.750) {};
\node [label={[label position=center]=}] (25) at (-3.250,-1.500) {};
\node [] (26) at (-3.750,-0.500) {};
\node [] (27) at (-3.500,-1.250) {};
\node [] (28) at (-3.500,-1.750) {};
\node [] (29) at (-3.500,-2.250) {};
\node [] (30) at (-3.500,-2.750) {};
\node [] (31) at (-4.250,-0.500) {};
\node [] (32) at (-5.750,-0.500) {};
\path[dashed=None,draw=,color=black] (-4.750,-0.500) rectangle (-5.250,-3.000);
\node at (-5.000,-1.750) {};
\node [] (33) at (-5.000,-0.250) {};
\node [shape=rectangle,fill=white,draw,label={[label position=center]$f$},inner xsep=0.1875cm,inner ysep=0.625cm] (34) at (-5.000,-1.250) {};
\node [] (35) at (-5.000,-0.750) {};
\node [] (36) at (-5.000,-1.250) {};
\node [] (37) at (-5.000,-1.750) {};
\node [] (38) at (-5.000,-0.750) {};
\node [] (39) at (-5.000,-1.250) {};
\node [] (40) at (-5.000,-1.750) {};
\node [shape=rectangle,fill=white,draw,label={[label position=center]$g$},inner xsep=0.1875cm,inner ysep=0.375cm] (41) at (-5.000,-2.500) {};
\node [] (42) at (-5.000,-2.250) {};
\node [] (43) at (-5.000,-2.750) {};
\node [] (44) at (-5.000,-2.250) {};
\node [] (45) at (-5.000,-2.750) {};
\node [] (46) at (-6.250,-0.500) {};
\node [] (47) at (-6.500,-1.250) {};
\node [] (48) at (-6.500,-1.750) {};
\node [] (49) at (-6.500,-2.250) {};
\node [] (50) at (-6.500,-2.750) {};
\end{pgfonlayer}
\begin{pgfonlayer}{edgelayer}
\draw[] (5.center) to[out=-89.9, in=  0] (12.center);
\draw[] (1.center) to[out=180, in=  0] (13.center);
\draw[] (2.center) to[out=180, in=  0] (14.center);
\draw[] (9.center) to[out=180, in=-89.9] (6.center) to[out= 90, in=180] (7.center) to[out=  0, in= 90] (5.center);
\draw[] (3.center) to[out=180, in=  0] (18.center);
\draw[] (4.center) to[out=180, in=  0] (19.center);
\draw[] (10.center) to[out=180, in=  0] (21.center);
\draw[] (11.center) to[out=180, in=  0] (22.center);
\draw[] (16.center) to[out=180, in=  0] (23.center);
\draw[] (17.center) to[out=180, in=  0] (24.center);
\draw[] (31.center) to[out=-89.9, in=  0] (38.center);
\draw[] (27.center) to[out=180, in=  0] (39.center);
\draw[] (28.center) to[out=180, in=  0] (40.center);
\draw[] (29.center) to[out=180, in=  0] (44.center);
\draw[] (30.center) to[out=180, in=  0] (45.center);
\draw[] (35.center) to[out=180, in=-89.9] (32.center) to[out= 90, in=180] (33.center) to[out=  0, in= 90] (31.center);
\draw[] (36.center) to[out=180, in=  0] (47.center);
\draw[] (37.center) to[out=180, in=  0] (48.center);
\draw[] (42.center) to[out=180, in=  0] (49.center);
\draw[] (43.center) to[out=180, in=  0] (50.center);
\end{pgfonlayer}
\end{tikzpicture}

A traced prop functor $F$ is a prop functor that preserves the trace: $tr F(f) = F(tr f)$
\end{definition}

We now will be interested in presentations of traced props. The definition of a traced prop given by generators and equations is similar to Definition~\ref{defn:universalprop}, by replacing props and prop functors by traced prop and prop functors.

Another way of defining it is by first looking at the prop given by generators and equations, and then adding the trace:
\begin{definition}
  Let $P$ be a prop. The traced completion $\widehat P$ of $P$ is a prop s.t.
  \begin{itemize}
  \item $\widehat P$ is traced.
  \item There is a prop functor $\iota: P  \to \widehat P$.
  \item For any other prop ($Q,\eta$) with the same properties, there is a unique traced prop functor $G$ from $\widehat P$ to $Q$ s.t. $G\iota = \eta$
  \end{itemize}    
\end{definition}
By the universal property of the definition, it is easy to see that the traced completion is unique up to isomorphism. It is less clear that it exists, and we will construct explicitely the traced completion of a prop in the next section.

It is important to note that $\iota$ is in general not an embedding of $P$ into $\widehat P$, i.e., it is not always injective.

\vspace{5mm}

Our goal now is to add traces to matrices. 
Consider the prop  $\matrices{\mathbb{Z}_+}$ of definition~\ref{def:matricesintegers}, i.e. the prop of matrices with noninteger coefficients. As explained, the morphisms of this prop can also be seen as bipartite graphs. Now consider the \emph{traced prop} given by the same axioms, and in particular the morphisms $0 \to 0$ of this prop, i.e. morphisms with no input and no output.
They are clearly obtained by  plugging the outputs of some bipartite graph back into its inputs, so that  all vertices are not ``internal vertices'' and there are no inputs/output vertices.

An example is illustrated in Figure~\ref{fig:matrixgraph}.
We start from a matrix $M$ (\ref{fig:matrixgraph}.(a)), and represent it in the prop using a diagram (\ref{fig:matrixgraph}.(b)).
Now we trace the diagram by plugging inputs into outputs (\ref{fig:matrixgraph}.(c)). By identifying both extremities that are joined together, one can see we just obtained a fancy way of representing the multigraph (\ref{fig:matrixgraph}.(d)),i.e. \emph{the graph whose adjacency matrix is exactly} $M$.

Therefore, to each matrix $M$ of nonnegative integers, one can associate a morphism (called $\tilde{M}$ in the next definition) with no inputs and outputs in $\widehat{\matrices{\mathbb{Z}}}$, which looks visually like the graph whose adjacency matrix is $M$.

However our new prop has relations, which means that some of these square matrices actually represent the same morphism in this prop. When are two matrices/graphs equal exactly?

\begin{theorem}
  \label{thm:z}
Let $\widehat{\matrices{\mathbb{Z}}}$ be a \emph{traced} completion of the prop $\matrices{\mathbb{Z}}$ of Definition~\ref{def:matricesintegers}.

To any square matrix $M$ of $\matrices{\mathbb{Z}}$ of size $n$, we associate a $0 \to 0$ morphism $\tilde{M}$ of $\widehat{\matrices{\mathbb{Z}}}$ by tracing everything, i.e. $\tilde{M} = tr_n(\iota(M))$.

Let $M, N$ two square matrices, possibly of different size. Then ${\tilde M}$ and ${\tilde N}$  represent the same morphism exactly when $M$ and $N$ are flow-equivalent.
\end{theorem}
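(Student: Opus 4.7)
The plan is to argue by normal-form reduction: every closed morphism of $\widehat{\matrices{\mathbb{Z}_+}}$ is of the form $\tilde{A}$ for some square nonnegative integer matrix $A$, and the equivalence relation on such matrices induced by the traced-prop axioms together with the MAT relations is exactly flow equivalence. The central technical tool is the Parry--Sullivan theorem, which characterizes flow equivalence as the equivalence generated by strong shift equivalence (the symmetric closure of $M = RS \sim SR = N$) together with the symbol-expansion move that subdivides an edge by a new state of in- and out-degree one.

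A preliminary step is to check that every $0 \to 0$ morphism in $\widehat{\matrices{\mathbb{Z}_+}}$ admits a matrix representative. Any such morphism can be written as $tr_k(\iota(D))$ for some diagram $D$ of type $k \to k$ in $\matrices{\mathbb{Z}_+}$, and the normal-form proposition for MAT recalled in Section~1 supplies a unique $k \times k$ matrix $A$ with $\iota(D) = \iota(A)$. Hence every closed morphism equals some $\tilde{A}$.

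Soundness is then handled by verifying that the two generating moves of Parry--Sullivan preserve $\tilde{A}$. For strong shift equivalence, applying the sliding axiom with $f = R : b \to a$, $g = S : a \to b$ and no external wires yields $tr_a(R \circ S) = tr_b(S \circ R)$, so $\widetilde{RS} = \widetilde{SR}$. For symbol expansion, if $M'$ is obtained from $M$ by subdividing an edge $u \to v$ through a new vertex $w$, I would decompose $tr_{n+1}(\iota(M')) = tr_n(tr_1(\iota(M')))$. The inner $tr_1$ loops the $w$-wire; since $w$ has in- and out-degree one in $M'$, its contributions in the bialgebra normal form are a single arity-one copy $\Delta$ and a single arity-one multiplication $\mu$, which collapse to identities by the monoid and comonoid unit axioms. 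Tightening and yanking then reduce the remaining loop to a single wire from output-$u$ to input-$v$, giving $tr_1(\iota(M')) = \iota(M)$ and hence $\tilde{M'} = \tilde{M}$.

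The final and hardest step is completeness: showing that the traced-prop and MAT axioms create no identifications on closed morphisms beyond flow equivalence. I would argue this by exhibiting a traced prop $\mathcal{Q}$ whose closed morphisms are, by construction, flow-equivalence classes of square nonnegative matrices, together with a prop functor $F : \matrices{\mathbb{Z}_+} \to \mathcal{Q}$ whose image satisfies the bialgebra equations in $\mathcal{Q}$. A candidate model takes $\mathcal{Q}[m,n]$ to be cospans of directed multigraphs with $m$ input and $n$ output boundary points, modulo an equivalence on the interior generated by Parry--Sullivan moves, with composition by gluing sinks to sources, tensor by disjoint union, and trace by identifying the first boundary input and output. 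The universal property of the traced completion then produces a unique traced functor $\widehat{F} : \widehat{\matrices{\mathbb{Z}_+}} \to \mathcal{Q}$ with $\widehat{F}(\tilde{M})$ equal to the flow-equivalence class of $M$, so $\tilde{M} = \tilde{N}$ forces $M$ and $N$ to be flow-equivalent. The principal obstacle is the delicate calibration of the equivalence on the open morphisms of $\mathcal{Q}$: too coarse and it collapses non-flow-equivalent closed diagrams; too fine and the sliding or bialgebra axioms fail. Threading this needle is essentially the categorical content that Hillman (cited in the introduction) already established for traced bicommutative bialgebras, and the remaining work is to transpose his analysis into the cospan/prop formalism used in this paper.
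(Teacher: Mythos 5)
Your soundness half is essentially correct: sliding with no spectator wires gives $\widetilde{RS}=\widetilde{SR}$, and a degree-one intermediate vertex is absorbed by the unit axioms plus yanking/tightening. Two small cautions there. First, the paper's Parry--Sullivan generator is the matrix move $\bigl(\begin{smallmatrix} a \\ A\end{smallmatrix}\bigr) \simeq \bigl(\begin{smallmatrix} 0 & 1 \\ A & 0 \\ a & 0\end{smallmatrix}\bigr)$, a state-delay rather than an edge subdivision, so you must either show your subdivision move together with strong shift equivalence generates the same relation, or work with that generator directly. Second, your preliminary claim that every closed morphism is some $\tilde A$ is asserted rather than proved (in the paper it follows from the universal property applied to the sub-traced-prop of morphisms $tr_k\,\iota(M)$), though it is not actually needed for the statement of the theorem.

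The genuine gap is completeness. You correctly identify that one must exhibit a traced prop containing a bicommutative bialgebra in which distinct flow classes remain distinct, and you correctly identify that the whole difficulty lies in calibrating the equivalence on \emph{open} cospans so that composition, tensor and trace are well defined, the traced-prop axioms hold, and yet closed morphisms are not collapsed beyond flow equivalence --- but you then defer exactly this calibration to Hillman. That calibration is the entire mathematical content of the theorem, so as written the key step is missing rather than proved. The paper resolves it by a different and more economical route: Proposition~\ref{prop:tracedcompletion} computes the traced completion of an \emph{arbitrary} prop $P$ explicitly, as pairs $(M,k)$ with $M\in P[k+n,k+m]$ modulo the relation generated by (i) sliding a morphism $g$ of $P$ around the $k$ looped wires and (ii) adding one looped wire via a symmetry; the nontrivial direction --- that no further identifications are forced --- is established by verifying that this quotient is itself a traced prop (the appendix) and invoking the universal property. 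The theorem is then read off by inspection: restricted to $0\to 0$ morphisms over $P=\matrices{\mathbb{Z}_+}$, relations (i) and (ii) are literally the two Parry--Sullivan generators. To salvage your model-theoretic route you would have to carry out for your cospan category exactly the verification the paper performs for its explicit $Q$, at which point you would have reconstructed Proposition~\ref{prop:tracedcompletion} in a special case.
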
  
Flow equivalence \cite{parry75} is a relation on graphs (equivalently on square matrices of nonnegative integers) used in symbolic dynamics. The definition will be given later on.
\begin{figure}[t]
  \begin{tabular}{cccc}
    $(a)$ & $(b)$ & $(c)$ & $(d)$ \\
    $M = \begin{pmatrix}
  0 & 2 & 1 \\
  0 & 1 & 1 \\
  0 & 0 & 1 \\
    \end{pmatrix}$ &
    \begin{tikzpicture}[baseline=(current bounding box.center)]
\begin{pgfonlayer}{nodelayer}
\node [] (0) at (-0.000,-0.500) {};
\node [style=monoid] (1) at (-0.500,-0.500) {};
\node [] (2) at (-0.000,-1.500) {};
\node [style=monoid] (3) at (-0.500,-1.500) {};
\node [] (4) at (-0.000,-2.500) {};
\node [style=monoid] (5) at (-0.500,-2.500) {};
\node [] (6) at (-1.000,-0.250) {};
\node [] (7) at (-1.000,-0.750) {};
\node [] (8) at (-1.000,-1.500) {};
\node [] (9) at (-1.000,-2.250) {};
\node [] (10) at (-1.000,-2.750) {};
\node [style=bmonoid] (11) at (-1.500,-0.500) {};
\node [] (12) at (-2.000,-0.500) {};
\node [style=bmonoid] (13) at (-1.500,-1.500) {};
\node [] (14) at (-2.000,-1.500) {};
\node [] (15) at (-1.438,-2.500) {};
\node [] (16) at (-2.000,-2.500) {};
\end{pgfonlayer}
\begin{pgfonlayer}{edgelayer}
\draw[] (0.center) to[out=180, in=  0] (1.center);
\draw[] (2.center) to[out=180, in=  0] (3.center);
\draw[] (4.center) to[out=180, in=  0] (5.center);
\draw[] (3.center) to[out=127, in=  0] (6.center);
\draw[] (3.center) to[out=180, in=  0] (7.center);
\draw[] (5.center) to[out=180, in=  0] (9.center);
\draw[] (5.center) to[out=-127, in=  0] (10.center);
\draw[] (6.center) to[out=180, in=53.1] (11.center);
\draw[] (7.center) to[out=180, in=  0] (11.center);
\draw[] (5.center) to[out=127, in=-44.9] (8.center) to[out=135, in=-53] (11.center);
\draw[] (11.center) to[out=180, in=  0] (12.center);
\draw[] (3.center) to[out=-127, in= 45] (8.center) to[out=-135, in= 45] (13.center);
\draw[] (9.center) to[out=180, in=-44.9] (13.center);
\draw[] (13.center) to[out=180, in=  0] (14.center);
\draw[] (10.center) to[out=180, in=  0] (15.center);
\draw[] (15.center) to[out=180, in=  0] (16.center);
\end{pgfonlayer}
\end{tikzpicture}
 &
    \begin{tikzpicture}[baseline=(current bounding box.center)]
\begin{pgfonlayer}{nodelayer}
\node [] (0) at (-0.500,-1.250) {};
\node [] (1) at (-0.500,-2.000) {};
\node [] (2) at (-0.500,-2.750) {};
\node [] (3) at (-3.000,-1.250) {};
\node [] (4) at (-3.000,-2.000) {};
\node [] (5) at (-3.000,-2.750) {};
\path[dashed=None,draw=,color=black] (-1.000,-1.500) rectangle (-2.500,-4.500);
\node at (-1.750,-3.000) {};
\node [] (6) at (-1.750,-0.250) {};
\node [] (7) at (-1.750,-0.750) {};
\node [] (8) at (-1.750,-1.250) {};
\node [style=monoid] (9) at (-1.250,-2.000) {};
\node [style=monoid] (10) at (-1.250,-3.000) {};
\node [style=monoid] (11) at (-1.250,-4.000) {};
\node [] (12) at (-1.750,-1.750) {};
\node [] (13) at (-1.750,-2.250) {};
\node [] (14) at (-1.750,-3.000) {};
\node [] (15) at (-1.750,-3.750) {};
\node [] (16) at (-1.750,-4.250) {};
\node [style=bmonoid] (17) at (-2.250,-2.000) {};
\node [style=bmonoid] (18) at (-2.250,-3.000) {};
\node [] (19) at (-2.250,-4.000) {};
\end{pgfonlayer}
\begin{pgfonlayer}{edgelayer}
\draw[] (0.center) to[out=-89.9, in=  0] (9.center);
\draw[] (1.center) to[out=-89.9, in=  0] (10.center);
\draw[] (2.center) to[out=-89.9, in=  0] (11.center);
\draw[] (10.center) to[out=127, in=  0] (12.center);
\draw[] (10.center) to[out=180, in=  0] (13.center);
\draw[] (11.center) to[out=180, in=  0] (15.center);
\draw[] (11.center) to[out=-127, in=  0] (16.center);
\draw[] (12.center) to[out=180, in=53.1] (17.center);
\draw[] (13.center) to[out=180, in=  0] (17.center);
\draw[] (11.center) to[out=127, in=-44.9] (14.center) to[out=135, in=-53] (17.center);
\draw[] (10.center) to[out=-127, in= 45] (14.center) to[out=-135, in= 45] (18.center);
\draw[] (15.center) to[out=180, in=-44.9] (18.center);
\draw[] (16.center) to[out=180, in=  0] (19.center);
\draw[] (17.center) to[out=180, in=-89.9] (3.center) to[out= 90, in=180] (6.center) to[out=  0, in= 90] (0.center);
\draw[] (18.center) to[out=180, in=-89.9] (4.center) to[out= 90, in=180] (7.center) to[out=  0, in= 90] (1.center);
\draw[] (19.center) to[out=180, in=-89.9] (5.center) to[out= 90, in=180] (8.center) to[out=  0, in= 90] (2.center);
\end{pgfonlayer}
\end{tikzpicture}
 &
\begin{tikzpicture}[baseline=(current bounding box.center)]
  \node[circle,draw] (A) at (0,2) {};
  \node[circle,draw] (B) at (0,1) {};
  \node[circle,draw] (C) at (0,0) {};

  \draw[->, bend left] (A) edge (B);
  \draw[->, bend right] (A) edge (B);
  \draw[->, bend left=40] (A) edge (C);
  \draw[->, loop left] (B) edge (B);
  \draw[->, bend left] (B) edge (C);
  \draw[->, loop left] (C) edge (C);
\end{tikzpicture}  
\\            
\end{tabular}    
  \caption{From a matrix to a graph, using diagrams.}
  \label{fig:matrixgraph}
  
\end{figure}
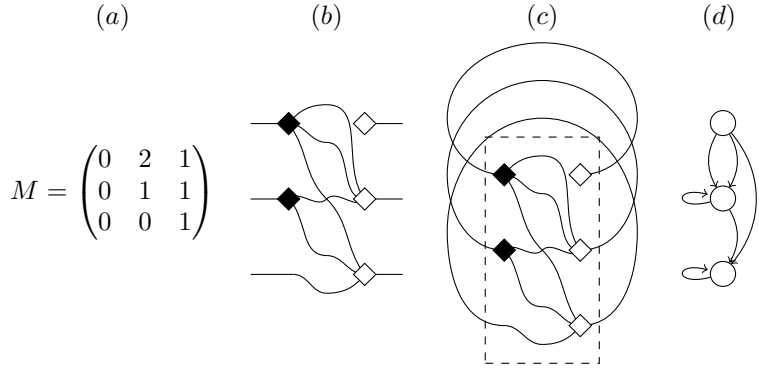  

This theorem is already present in the work of Hillman~\cite{Hillman}, although its use has never been investigated thoroughly. Here we make it more precise.

Before proving the theorem and giving applications, we will do the same work with the prop of Proposition~\ref{def:matricespolynomial}. Here we are able to represent matrices in $\mathbb{Z}_+[t]$. There are harder to represents as graphs, unless one starts labelling the edges with elements of $\mathbb{Z}_+[t]$. However, if all coefficients of the matrix are integer multiples of $t$, then we can also see them as graphs.

\begin{theorem}
  \label{thm:zt}
Let $\widehat {\matrices{\mathbb{Z}_+[t]}}$ be a traced completion of the prop $\matrices{\mathbb{Z}_+[t]}$ of Proposition~\ref{def:matricespolynomial}.
We use the same notation as the previous theorem.

Let $M$ and $N$ two square matrices with nonnegative integral coefficients. Then $\widetilde{tM}$ and $\widetilde{tN}$  represent the same morphism exactly when $M$ and $N$ are strong-shift equivalent.
\end{theorem}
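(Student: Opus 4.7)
The plan is to prove the two implications separately. For the direction ``SSE implies equal images under $\widetilde{(\cdot)}$'', it suffices to handle the generating move of SSE: if $M = RS$ and $N = SR$ for some nonnegative integer matrices $R: n \to m$ and $S: m \to n$, then $\widetilde{tM} = \widetilde{tN}$. Rewrite $\iota(tM) = \iota(tR) \circ \iota(S)$ and $\iota(tN) = \iota(S) \circ \iota(tR)$. Applying the sliding axiom of a traced prop (Definition~\ref{def:tracedprop}) with $\iota(S)$ sliding around the feedback loop gives
\[
\widetilde{tM} \;=\; tr_m\bigl(\iota(tR) \circ \iota(S)\bigr) \;=\; tr_n\bigl(\iota(S) \circ \iota(tR)\bigr) \;=\; \widetilde{tN}.
\]
Iterating this over any finite SSE chain establishes the implication.

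For the converse, the strategy mirrors the proof of Theorem~\ref{thm:z}, but with the presence of the generator $h$ (representing multiplication by $t$) restricting the possible identifications. The plan is to construct a traced prop $\mathbf{T}$ whose $0 \to 0$ morphisms parametrize SSE classes of nonnegative integer matrices, and to equip the generators $\mu, \eta, \Delta, \epsilon, h$ of $\matrices{\mathbb{Z}_+[t]}$ with interpretations in $\mathbf{T}$ satisfying all the defining relations of Proposition~\ref{def:matricespolynomial}. By the universal property of the traced completion, this lifts uniquely to a traced prop functor $F: \widehat{\matrices{\mathbb{Z}_+[t]}} \to \mathbf{T}$, and I would arrange for $F(\widetilde{tM})$ to be the SSE class of $M$. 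Equality $\widetilde{tM} = \widetilde{tN}$ then forces $[M]_{\mathrm{SSE}} = [N]_{\mathrm{SSE}}$.

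The main obstacle is in the converse direction, specifically in isolating the role of $h$ from the bare bialgebra structure. In Theorem~\ref{thm:z}, the absence of $h$ allowed tracing to identify more graphs than SSE does --- the extra identifications correspond to edge subdivision and related local moves which, together with SSE, generate flow equivalence. With $h$ present and tracking a single $t$ on every edge of $\widetilde{tM}$, subdivision is blocked: replacing an edge carrying $t$ by a length-two path through a fresh vertex would change the monomial from $t$ to either $1$ or $t^2$ (whichever way the $t$ is distributed between the two new edges), and no such move survives the five equations governing $h$. Proving rigorously that $h$ does exactly this much and no more --- that the trace axioms together with the $h$-relations identify precisely matrices related by SSE --- is the central content of the theorem, and will likely require a normal-form analysis for morphisms in the traced completion restricted to the image of $\matrices{\mathbb{Z}_+[t]}$, together with a careful tracking of the $t$-degree along every wire of a traced diagram.
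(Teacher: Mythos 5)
Your first direction is fine and matches the paper: the sliding axiom applied to the factorization $tM = (tR)\circ S$, $tN = S\circ(tR)$ is exactly how one sees that strong shift equivalence forces $\widetilde{tM}=\widetilde{tN}$ (you have the two trace indices swapped, but that is cosmetic).

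The converse, however, contains a genuine gap, and you have in effect flagged it yourself: the ``central content'' you defer to a future normal-form analysis is the entire theorem. Note that by the paper's explicit construction of the traced completion (Proposition~\ref{prop:tracedcompletion}), the $0\to 0$ morphisms of $\widehat{\matrices{\mathbb{Z}_+[t]}}$ are precisely Parry--Sullivan equivalence classes of square matrices over $\mathbb{Z}_+[t]$, where PS-equivalence is generated by $RS\simeq SR$ \emph{together with} the splitting move $\begin{pmatrix} a\\ A\end{pmatrix}\simeq\begin{pmatrix}0&1\\A&0\\a&0\end{pmatrix}$. So any traced model $\mathbf{T}$ you build will automatically identify PS-equivalent matrices; your strategy therefore cannot succeed unless you first prove that $tM\simeq tN$ (PS over $\mathbb{Z}_+[t]$) implies $M\equiv N$ (SSE over $\mathbb{Z}_+$) --- and that is the theorem. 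Your heuristic that $h$ ``blocks subdivision'' misreads the situation: the splitting move is perfectly legal on $tM$ inside $\mathbb{Z}_+[t]$ (it simply introduces an entry equal to $1$); the question is whether a chain of such moves and $RS\leftrightarrow SR$ moves connecting $tM$ to $tN$ forces an SSE chain from $M$ to $N$, which is a global statement that no local $t$-degree bookkeeping settles. The paper resolves it by importing a nontrivial result of Boyle and Wagoner: $M$ and $N$ are SSE over $\mathbb{Z}_+$ if and only if $I-tM$ and $I-tN$ are \emph{positive equivalent} (connected by row/column operations over $\mathbb{Z}_+[t]$ staying in the positive cone), and then checking that each PS generator preserves positive equivalence of $I-(\cdot)$. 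Without that input, or an equivalent substitute, your converse does not close.
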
  

Two matrices are strong-shift equivalent exactly when the graphs represents isomorphic subshifts~\cite{williams2}. As a consequence, this theorem essentially states that (a) deciding the theory of strong-shift equivalence, equivalently  deciding whether two graphs represents isomorphic subshifts of finite type is exactly the same as (b) deciding which equalities are true in all models of traced bialgebras.

The remainder of this section is devoted to the proof of the two statements, and of the definitions of flow equivalence and strong-shift equivalence.

\subsection{Understanding traced props}

Suppose we already know quite well a prop given by generators and relations. How do we obtain the traced prop given by the same relations (i.e. the traced completion of the original prop) ?
This is what we  will do below.

We denote $\sigma_{a,b} : a+b \to b+a$ the symmetry that on diagrams exchanges the first $a$ wires and the last $b$ wires.
\begin{proposition}
  \label{prop:tracedcompletion}
  Let $P$ be a prop.  The traced completion $\widehat{P}$ of $P$ is isomorphic to the following prop $Q$:

  \begin{itemize}
  \item Morphisms $n \to m$ are equivalence classes for $\sim$ of pairs $(M,k)$ where $M \in P[k+n,k+m]$  and $k$ is an integer, where $\sim$ is the smallest equivalence relation s.t.
    \begin{itemize}
    \item $((g \otimes id^{\otimes m} ) \circ M,q) \sim (M \circ (g \otimes id^{\otimes n} ),p)$ where $M: n+q \to m+p$ and $g: p \to q$
    \item $(M,k) \sim  ((id \otimes M) \circ (\sigma_{n,1} \otimes id^{\otimes m}) ,k+1)$ 
    \end{itemize}
    To preverse ink, we will use  $[M,k]$ to denote the equivalence class that contains $(M,k)$.
  \item The composition of  $[M,p]$ and $[N,q]$, where $M: p+m\to p+n$ and $N: p+r \to p+m$ is 
    $\left[(N \otimes M) \circ (id^{\otimes q} \otimes \sigma_{m+q,r}), p+q+m\right]$
  \item The tensor product of  $[M,p]$ and $[N,q]$, where $M: m+p\to n+p$ and $N: m'+q \to n'+q$ is
    $\left[(id^{\otimes p} \otimes \sigma_{n,q} \otimes id^{\otimes n'}) \circ (N \otimes M) \circ (id^{\otimes p} \otimes \sigma_{m,q} \otimes id^{\otimes m'}) , p+q\right]$
  \item The trace of $[M,k]$ is  $[M,k+1]$
\end{itemize}    
\end{proposition}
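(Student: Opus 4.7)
The plan is to verify the three pieces of the universal property in turn: first that $Q$ is a traced prop, then that the obvious map $\iota\colon P \to Q$, $f \mapsto [f,0]$, is a prop functor, and finally that any prop functor $\eta\colon P \to R$ into a traced prop $R$ extends uniquely to a traced prop functor $G\colon Q \to R$.

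First I would check well-definedness of the structure on $Q$. The two generating relations for $\sim$ encode exactly the sliding axiom of the trace and the insertion of an extra buffered wire into the traced block. I would verify that the proposed composition, tensor product, and trace on representatives descend to equivalence classes by showing that applying a generating move to either argument can be matched by a corresponding move on the result; the book-keeping amounts to rewriting strings of symmetries $\sigma_{a,b}$ using the naturality axioms of a prop. For the trace, the definition $tr[M,k] := [M,k+1]$ is compatible with both generators essentially by inspection once one re-reads the types. The prop axioms for $Q$ (associativity, unitality, symmetry) all reduce, after choosing common representatives for the operands, to the prop axioms for $P$ together with symmetry calculations.

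Second I would verify the four traced prop axioms in $Q$. Yanking is immediate because $tr$ of the symmetry corresponds to the class $[\sigma,1]$, which by the second generating relation coincides with $[id,0]$. Sliding is essentially built in: it is precisely the first generating relation of $\sim$ applied to the right representative. Tightening and strength are then direct calculations from the definitions of composition and tensor product, once the bookkeeping permutations are pushed to the boundary.

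Third, and where the main technical burden lies, I would construct the extension $G$. The forced definition is $G([M,k]) := tr_k(\eta(M))$, where $tr_k$ denotes the $k$-fold iteration of the trace in $R$. The heart of the argument is showing this is well-defined on equivalence classes, which reduces to verifying that $tr_k \circ \eta$ is invariant under both generating relations of $\sim$: the first relation is exactly the sliding axiom applied inside $R$, while the second (the buffered wire under a symmetry) reduces to a combination of the yanking and tightening axioms, since a traced identity wire placed alongside a morphism collapses harmlessly. Once well-definedness is in hand, preservation of composition, tensor, identity, and symmetry is routine because each can be computed on a freely chosen representative, and preservation of trace is immediate from the definition. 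Uniqueness is forced because every element $[M,k]$ of $Q$ equals $tr_k([M,0]) = tr_k(\iota(M))$, so any traced prop functor extending $\eta$ must send it to $tr_k(\eta(M))$. The main obstacle is precisely this bookkeeping: graphically the relations for $\sim$ encode exactly the topological freedom of diagrams inside a traced box, but tracking the auxiliary permutations $\sigma_{n,1}$ in formulas is the delicate part.
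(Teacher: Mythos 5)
Your proposal is correct, but it inverts the logical direction of the paper's argument, and the comparison is worth making explicit. The paper assumes the traced completion $\widehat{P}$ exists abstractly, first shows (via the sub-traced-prop of morphisms of the form $tr_k\,\iota(M)$ and universality) that every morphism of $\widehat{P}$ has that form, then defines $F\colon [M,k]\mapsto tr_k\,\iota(M)$, checks it is a well-defined surjective traced prop functor, and finally produces a one-sided inverse $G$ from the universal property to conclude $F$ is injective. You instead verify the universal property directly for $Q$: you build the unique extension $G([M,k])=tr_k(\eta(M))$ into an \emph{arbitrary} traced prop $R$, which makes $Q$ \emph{a} traced completion and hence isomorphic to $\widehat{P}$ by uniqueness of universal objects. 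The core computations are the same in both treatments — the diagrammatic identities showing that $tr_k\circ\eta$ (resp.\ $tr_k\circ\iota$) is invariant under the two generating relations of $\sim$ and intertwines composition and tensor are exactly the "equations true in any traced prop" displayed in the paper's proof, and the grind of checking that $Q$ itself is a traced prop is shared and appendix-level in both cases. What your route buys is self-containedness: the paper's proof presupposes existence of $\widehat{P}$ (deferred to a citation-level construction), whereas your argument establishes existence and the explicit description simultaneously, and it dispenses with the surjectivity step entirely. What it costs is that well-definedness of $G$ on equivalence classes must be checked against every traced prop $R$ rather than just one — though since the verification only uses the traced prop axioms, this is no extra work in practice. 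One small caution: your identification of yanking with the second generating relation specialized to $M=\mathrm{id}$ is right in spirit, but the bookkeeping of the permutation $\sigma_{n,1}\otimes id^{\otimes m}$ in that relation is delicate (the paper's own formula does not obviously typecheck as written), so in a full write-up you should pin down the exact convention before invoking it.
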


The definition is a bit hard to digest, and will be easier to see graphically.
We will picture pairs $[M,k]$ as the diagram $M$ with the first $k$ wires dashed:
\begin{tikzpicture}[baseline=(current bounding box.center)]
\begin{pgfonlayer}{nodelayer}
\node [] (0) at (-0.000,-0.250) {};
\node [] (1) at (-0.000,-0.750) {};
\node [shape=rectangle,fill=white,draw,label={[label position=center]$M$},inner xsep=0.1875cm,inner ysep=0.375cm] (2) at (-0.500,-0.500) {};
\node [] (3) at (-0.500,-0.250) {};
\node [] (4) at (-0.500,-0.750) {};
\node [] (5) at (-0.500,-0.250) {};
\node [] (6) at (-0.500,-0.750) {};
\node [] (7) at (-1.000,-0.250) {};
\node [] (8) at (-1.000,-0.750) {};
\end{pgfonlayer}
\begin{pgfonlayer}{edgelayer}
\draw[dashed] (0.center) to[out=180, in=  0] (5.center);
\draw[] (1.center) to[out=180, in=  0] (6.center);
\draw[dashed] (3.center) to[out=180, in=  0] (7.center);
\draw[] (4.center) to[out=180, in=  0] (8.center);
\end{pgfonlayer}
\end{tikzpicture}

For simplicity, wires in the following diagrams may represent an arbitrary (possibly zero) number of wires

  \begin{itemize}
  \item The equivalence class on pairs is defined by the following rules (the second equation gives two different diagrammatic interpretations, depending on whether the wire to be permuted with the first wire is dashed or nondashed):
    
    { \begin{tikzpicture}[baseline=(current bounding box.center)]
\begin{pgfonlayer}{nodelayer}
\node [] (0) at (-0.000,-0.250) {};
\node [] (1) at (-0.000,-0.750) {};
\node [shape=rectangle,fill=white,draw,label={[label position=center]$M$},inner xsep=0.1875cm,inner ysep=0.375cm] (2) at (-0.500,-0.500) {};
\node [] (3) at (-0.500,-0.250) {};
\node [] (4) at (-0.500,-0.750) {};
\node [] (5) at (-0.500,-0.250) {};
\node [] (6) at (-0.500,-0.750) {};
\node [shape=rectangle,fill=white,draw,label={[label position=center]$g$},inner xsep=0.1875cm,inner ysep=0.1875cm] (7) at (-1.000,-0.250) {};
\node [] (8) at (-1.000,-0.250) {};
\node [] (9) at (-1.000,-0.250) {};
\node [] (10) at (-1.500,-0.250) {};
\node [] (11) at (-0.938,-0.750) {};
\node [] (12) at (-1.500,-0.750) {};
\node [label={[label position=center]$\sim$}] (13) at (-1.750,-0.500) {};
\node [] (14) at (-2.000,-0.250) {};
\node [shape=rectangle,fill=white,draw,label={[label position=center]$g$},inner xsep=0.1875cm,inner ysep=0.1875cm] (15) at (-2.500,-0.250) {};
\node [] (16) at (-2.500,-0.250) {};
\node [] (17) at (-2.500,-0.250) {};
\node [] (18) at (-2.000,-0.750) {};
\node [] (19) at (-2.562,-0.750) {};
\node [shape=rectangle,fill=white,draw,label={[label position=center]$M$},inner xsep=0.1875cm,inner ysep=0.375cm] (20) at (-3.000,-0.500) {};
\node [] (21) at (-3.000,-0.250) {};
\node [] (22) at (-3.000,-0.750) {};
\node [] (23) at (-3.000,-0.250) {};
\node [] (24) at (-3.000,-0.750) {};
\node [] (25) at (-3.500,-0.250) {};
\node [] (26) at (-3.500,-0.750) {};
\end{pgfonlayer}
\begin{pgfonlayer}{edgelayer}
\draw[dashed] (0.center) to[out=180, in=  0] (5.center);
\draw[] (1.center) to[out=180, in=  0] (6.center);
\draw[dashed] (3.center) to[out=180, in=  0] (9.center);
\draw[dashed] (8.center) to[out=180, in=  0] (10.center);
\draw[] (4.center) to[out=180, in=  0] (11.center);
\draw[] (11.center) to[out=180, in=  0] (12.center);
\draw[dashed] (14.center) to[out=180, in=  0] (17.center);
\draw[] (18.center) to[out=180, in=  0] (19.center);
\draw[dashed] (16.center) to[out=180, in=  0] (23.center);
\draw[] (19.center) to[out=180, in=  0] (24.center);
\draw[dashed] (21.center) to[out=180, in=  0] (25.center);
\draw[] (22.center) to[out=180, in=  0] (26.center);
\end{pgfonlayer}
\end{tikzpicture}
 } \hfill    { \begin{tikzpicture}[baseline=(current bounding box.center)]
\begin{pgfonlayer}{nodelayer}
\node [] (0) at (-0.000,-0.250) {};
\node [] (1) at (-0.000,-0.750) {};
\node [] (2) at (-0.000,-1.250) {};
\node [] (3) at (-0.625,-0.250) {};
\node [] (4) at (-1.375,-1.250) {};
\node [] (5) at (-0.625,-0.750) {};
\node [] (6) at (-1.375,-0.750) {};
\node [] (7) at (-0.625,-1.250) {};
\node [] (8) at (-1.375,-0.250) {};
\node [] (9) at (-0.000,-1.750) {};
\node [] (10) at (-1.167,-1.750) {};
\node [] (11) at (-0.000,-2.250) {};
\node [] (12) at (-1.167,-2.250) {};
\node [] (13) at (-2.000,-0.250) {};
\node [] (14) at (-2.500,-0.250) {};
\node [shape=rectangle,fill=white,draw,label={[label position=center]$M$},inner xsep=0.1875cm,inner ysep=0.875cm] (15) at (-2.000,-1.500) {};
\node [] (16) at (-2.000,-0.750) {};
\node [] (17) at (-2.000,-1.250) {};
\node [] (18) at (-2.000,-1.750) {};
\node [] (19) at (-2.000,-2.250) {};
\node [] (20) at (-2.000,-0.750) {};
\node [] (21) at (-2.000,-1.250) {};
\node [] (22) at (-2.000,-1.750) {};
\node [] (23) at (-2.000,-2.250) {};
\node [] (24) at (-2.500,-0.750) {};
\node [] (25) at (-2.500,-1.250) {};
\node [] (26) at (-2.500,-1.750) {};
\node [] (27) at (-2.500,-2.250) {};
\node [label={[label position=center]$\sim$}] (28) at (-2.750,-1.250) {};
\node [] (29) at (-3.000,-0.500) {};
\node [] (30) at (-3.000,-1.000) {};
\node [] (31) at (-3.000,-1.500) {};
\node [] (32) at (-3.000,-2.000) {};
\node [shape=rectangle,fill=white,draw,label={[label position=center]$M$},inner xsep=0.1875cm,inner ysep=0.875cm] (33) at (-3.500,-1.250) {};
\node [] (34) at (-3.500,-0.500) {};
\node [] (35) at (-3.500,-1.000) {};
\node [] (36) at (-3.500,-1.500) {};
\node [] (37) at (-3.500,-2.000) {};
\node [] (38) at (-3.500,-0.500) {};
\node [] (39) at (-3.500,-1.000) {};
\node [] (40) at (-3.500,-1.500) {};
\node [] (41) at (-3.500,-2.000) {};
\node [] (42) at (-4.000,-0.500) {};
\node [] (43) at (-4.000,-1.000) {};
\node [] (44) at (-4.000,-1.500) {};
\node [] (45) at (-4.000,-2.000) {};
\end{pgfonlayer}
\begin{pgfonlayer}{edgelayer}
\draw[dashed] (0.center) to[out=180, in=  0] (3.center);
\draw[dashed] (3.center) to[out=-127, in= 53] (4.center);
\draw[dashed] (1.center) to[out=180, in=  0] (5.center);
\draw[dashed] (5.center) to[out=180, in=  0] (6.center);
\draw[dashed] (2.center) to[out=180, in=  0] (7.center);
\draw[dashed] (7.center) to[out=127, in=-52.9] (8.center);
\draw[dashed] (8.center) to[out=180, in=  0] (13.center) to[out=180, in=  0] (14.center);
\draw[dashed] (6.center) to[out=180, in=  0] (20.center);
\draw[dashed] (4.center) to[out=180, in=  0] (21.center);
\draw[dashed] (9.center) to[out=180, in=  0] (10.center) to[out=180, in=  0] (22.center);
\draw[] (11.center) to[out=180, in=  0] (12.center) to[out=180, in=  0] (23.center);
\draw[dashed] (16.center) to[out=180, in=  0] (24.center);
\draw[dashed] (17.center) to[out=180, in=  0] (25.center);
\draw[dashed] (18.center) to[out=180, in=  0] (26.center);
\draw[] (19.center) to[out=180, in=  0] (27.center);
\draw[dashed] (29.center) to[out=180, in=  0] (38.center);
\draw[dashed] (30.center) to[out=180, in=  0] (39.center);
\draw[dashed] (31.center) to[out=180, in=  0] (40.center);
\draw[] (32.center) to[out=180, in=  0] (41.center);
\draw[dashed] (34.center) to[out=180, in=  0] (42.center);
\draw[dashed] (35.center) to[out=180, in=  0] (43.center);
\draw[dashed] (36.center) to[out=180, in=  0] (44.center);
\draw[] (37.center) to[out=180, in=  0] (45.center);
\end{pgfonlayer}
\end{tikzpicture}
 }\hfill    { \begin{tikzpicture}[baseline=(current bounding box.center)]
\begin{pgfonlayer}{nodelayer}
\node [] (0) at (-0.000,-0.250) {};
\node [] (1) at (-0.000,-0.750) {};
\node [] (2) at (-0.000,-1.250) {};
\node [] (3) at (-0.000,-1.750) {};
\node [] (4) at (-0.000,-2.250) {};
\node [] (5) at (-0.250,-1.250) {};
\node [] (6) at (-0.250,-0.250) {};
\node [] (7) at (-0.250,-0.750) {};
\node [] (8) at (-0.250,-1.250) {};
\node [] (9) at (-0.250,-1.750) {};
\node [] (10) at (-0.250,-2.250) {};
\node [] (11) at (-0.250,-0.250) {};
\node [] (12) at (-0.250,-0.750) {};
\node [] (13) at (-0.250,-1.250) {};
\node [] (14) at (-0.250,-1.750) {};
\node [] (15) at (-0.250,-2.250) {};
\node [] (16) at (-0.625,-0.250) {};
\node [] (17) at (-1.375,-1.750) {};
\node [] (18) at (-0.625,-0.750) {};
\node [] (19) at (-1.375,-0.750) {};
\node [] (20) at (-0.625,-1.250) {};
\node [] (21) at (-1.375,-1.250) {};
\node [] (22) at (-0.625,-1.750) {};
\node [] (23) at (-1.375,-0.250) {};
\node [] (24) at (-0.625,-2.250) {};
\node [] (25) at (-1.375,-2.250) {};
\node [] (26) at (-2.000,-0.250) {};
\node [] (27) at (-2.500,-0.250) {};
\node [shape=rectangle,fill=white,draw,label={[label position=center]$M$},inner xsep=0.1875cm,inner ysep=0.875cm] (28) at (-2.000,-1.500) {};
\node [] (29) at (-2.000,-1.000) {};
\node [] (30) at (-2.000,-2.000) {};
\node [] (31) at (-2.000,-0.750) {};
\node [] (32) at (-2.000,-1.250) {};
\node [] (33) at (-2.000,-1.750) {};
\node [] (34) at (-2.000,-2.250) {};
\node [] (35) at (-2.500,-1.000) {};
\node [] (36) at (-2.500,-2.000) {};
\node [label={[label position=center]$\sim$}] (37) at (-2.750,-1.250) {};
\node [] (38) at (-3.000,-0.500) {};
\node [] (39) at (-3.000,-1.000) {};
\node [] (40) at (-3.000,-1.500) {};
\node [] (41) at (-3.000,-2.000) {};
\node [shape=rectangle,fill=white,draw,label={[label position=center]$M$},inner xsep=0.1875cm,inner ysep=0.875cm] (42) at (-3.500,-1.250) {};
\node [] (43) at (-3.500,-0.750) {};
\node [] (44) at (-3.500,-1.750) {};
\node [] (45) at (-3.500,-0.500) {};
\node [] (46) at (-3.500,-1.000) {};
\node [] (47) at (-3.500,-1.500) {};
\node [] (48) at (-3.500,-2.000) {};
\node [] (49) at (-4.000,-0.750) {};
\node [] (50) at (-4.000,-1.750) {};
\end{pgfonlayer}
\begin{pgfonlayer}{edgelayer}
\draw[dashed] (0.center) to[out=180, in=  0] (11.center);
\draw[dashed] (1.center) to[out=180, in=  0] (12.center);
\draw[] (2.center) to[out=180, in=  0] (13.center);
\draw[] (3.center) to[out=180, in=  0] (14.center);
\draw[] (4.center) to[out=180, in=  0] (15.center);
\draw[dashed] (6.center) to[out=180, in=  0] (16.center);
\draw[] (16.center) to[out=-117, in= 63] (17.center);
\draw[dashed] (7.center) to[out=180, in=  0] (18.center);
\draw[dashed] (18.center) to[out=180, in=  0] (19.center);
\draw[] (8.center) to[out=180, in=  0] (20.center);
\draw[] (20.center) to[out=180, in=  0] (21.center);
\draw[] (9.center) to[out=180, in=  0] (22.center);
\draw[dashed] (22.center) to[out=117, in=-62.9] (23.center);
\draw[] (10.center) to[out=180, in=  0] (24.center);
\draw[] (24.center) to[out=180, in=  0] (25.center);
\draw[dashed] (23.center) to[out=180, in=  0] (26.center) to[out=180, in=  0] (27.center);
\draw[dashed] (19.center) to[out=180, in=  0] (31.center);
\draw[] (21.center) to[out=180, in=  0] (32.center);
\draw[] (17.center) to[out=180, in=  0] (33.center);
\draw[] (25.center) to[out=180, in=  0] (34.center);
\draw[dashed] (29.center) to[out=180, in=  0] (35.center);
\draw[] (30.center) to[out=180, in=  0] (36.center);
\draw[dashed] (38.center) to[out=180, in=  0] (45.center);
\draw[] (39.center) to[out=180, in=  0] (46.center);
\draw[] (40.center) to[out=180, in=  0] (47.center);
\draw[] (41.center) to[out=180, in=  0] (48.center);
\draw[dashed] (43.center) to[out=180, in=  0] (49.center);
\draw[] (44.center) to[out=180, in=  0] (50.center);
\end{pgfonlayer}
\end{tikzpicture}
 }
  \item The composition of $[M,p]$ and $[N,q]$ is:
        { \begin{tikzpicture}[baseline=(current bounding box.center)]
\begin{pgfonlayer}{nodelayer}
\node [] (0) at (-0.000,-0.250) {};
\node [] (1) at (-0.000,-0.750) {};
\node [shape=rectangle,fill=white,draw,label={[label position=center]$N$},inner xsep=0.1875cm,inner ysep=0.375cm] (2) at (-0.500,-0.500) {};
\node [] (3) at (-0.500,-0.250) {};
\node [] (4) at (-0.500,-0.750) {};
\node [] (5) at (-0.500,-0.250) {};
\node [] (6) at (-0.500,-0.750) {};
\node [] (7) at (-0.000,-1.250) {};
\node [] (8) at (-0.000,-1.750) {};
\node [shape=rectangle,fill=white,draw,label={[label position=center]$M$},inner xsep=0.1875cm,inner ysep=0.375cm] (9) at (-0.500,-1.500) {};
\node [] (10) at (-0.500,-1.250) {};
\node [] (11) at (-0.500,-1.750) {};
\node [] (12) at (-0.500,-1.250) {};
\node [] (13) at (-0.500,-1.750) {};
\node [] (14) at (-1.333,-0.250) {};
\node [] (15) at (-2.500,-0.250) {};
\node [] (16) at (-1.125,-0.750) {};
\node [] (17) at (-1.875,-1.750) {};
\node [] (18) at (-1.125,-1.250) {};
\node [] (19) at (-1.875,-1.250) {};
\node [] (20) at (-1.125,-1.750) {};
\node [] (21) at (-1.875,-0.750) {};
\node [] (22) at (-2.500,-0.750) {};
\node [] (23) at (-2.500,-1.250) {};
\node [] (24) at (-2.500,-1.750) {};
\end{pgfonlayer}
\begin{pgfonlayer}{edgelayer}
\draw[dashed] (0.center) to[out=180, in=  0] (5.center);
\draw[dashed] (1.center) to[out=180, in=  0] (6.center);
\draw[dashed] (7.center) to[out=180, in=  0] (12.center);
\draw[] (8.center) to[out=180, in=  0] (13.center);
\draw[dashed] (3.center) to[out=180, in=  0] (14.center) to[out=180, in=  0] (15.center);
\draw[] (4.center) to[out=180, in=  0] (16.center);
\draw[] (16.center) to[out=-127, in= 53] (17.center);
\draw[dashed] (10.center) to[out=180, in=  0] (18.center);
\draw[dashed] (18.center) to[out=180, in=  0] (19.center);
\draw[dashed] (11.center) to[out=180, in=  0] (20.center);
\draw[dashed] (20.center) to[out=127, in=-52.9] (21.center);
\draw[dashed] (21.center) to[out=180, in=  0] (22.center);
\draw[dashed] (19.center) to[out=180, in=  0] (23.center);
\draw[] (17.center) to[out=180, in=  0] (24.center);
\end{pgfonlayer}
\end{tikzpicture}
 }
  \item The tensor product of $[M,p]$ and $[N,q]$ is
        { \begin{tikzpicture}[baseline=(current bounding box.center)]
\begin{pgfonlayer}{nodelayer}
\node [] (0) at (-0.000,-0.250) {};
\node [] (1) at (-0.500,-0.250) {};
\node [] (2) at (-0.000,-0.750) {};
\node [] (3) at (-0.500,-0.750) {};
\node [] (4) at (-0.000,-1.250) {};
\node [] (5) at (-0.563,-1.250) {};
\node [] (6) at (-0.000,-1.750) {};
\node [] (7) at (-0.563,-1.750) {};
\node [] (8) at (-1.500,-0.250) {};
\node [] (9) at (-1.125,-0.750) {};
\node [] (10) at (-1.875,-1.250) {};
\node [] (11) at (-1.125,-1.250) {};
\node [] (12) at (-1.875,-0.750) {};
\node [] (13) at (-1.500,-1.750) {};
\node [shape=rectangle,fill=white,draw,label={[label position=center]$N$},inner xsep=0.1875cm,inner ysep=0.375cm] (14) at (-2.500,-0.500) {};
\node [] (15) at (-2.500,-0.250) {};
\node [] (16) at (-2.500,-0.750) {};
\node [] (17) at (-2.500,-0.250) {};
\node [] (18) at (-2.500,-0.750) {};
\node [shape=rectangle,fill=white,draw,label={[label position=center]$M$},inner xsep=0.1875cm,inner ysep=0.375cm] (19) at (-2.500,-1.500) {};
\node [] (20) at (-2.500,-1.250) {};
\node [] (21) at (-2.500,-1.750) {};
\node [] (22) at (-2.500,-1.250) {};
\node [] (23) at (-2.500,-1.750) {};
\node [] (24) at (-3.333,-0.250) {};
\node [] (25) at (-4.500,-0.250) {};
\node [] (26) at (-3.125,-0.750) {};
\node [] (27) at (-3.875,-1.250) {};
\node [] (28) at (-3.125,-1.250) {};
\node [] (29) at (-3.875,-0.750) {};
\node [] (30) at (-4.500,-0.750) {};
\node [] (31) at (-4.500,-1.250) {};
\node [] (32) at (-3.188,-1.750) {};
\node [] (33) at (-4.500,-1.750) {};
\end{pgfonlayer}
\begin{pgfonlayer}{edgelayer}
\draw[] (4.center) to[out=180, in=  0] (5.center);
\draw[] (6.center) to[out=180, in=  0] (7.center);
\draw[dashed] (2.center) to[out=180, in=  0] (3.center) to[out=180, in=  0] (9.center);
\draw[dashed] (9.center) to[out=-146, in= 34] (10.center);
\draw[] (5.center) to[out=180, in=  0] (11.center);
\draw[] (11.center) to[out=146, in=-33.9] (12.center);
\draw[] (7.center) to[out=180, in=  0] (13.center);
\draw[dashed] (0.center) to[out=180, in=  0] (1.center) to[out=180, in=  0] (8.center) to[out=180, in=  0] (17.center);
\draw[] (12.center) to[out=180, in=  0] (18.center);
\draw[dashed] (10.center) to[out=180, in=  0] (22.center);
\draw[] (13.center) to[out=180, in=  0] (23.center);
\draw[dashed] (15.center) to[out=180, in=  0] (24.center) to[out=180, in=  0] (25.center);
\draw[] (16.center) to[out=180, in=  0] (26.center);
\draw[] (26.center) to[out=-146, in= 34] (27.center);
\draw[dashed] (20.center) to[out=180, in=  0] (28.center);
\draw[dashed] (28.center) to[out=146, in=-33.9] (29.center);
\draw[dashed] (29.center) to[out=180, in=  0] (30.center);
\draw[] (27.center) to[out=180, in=  0] (31.center);
\draw[] (21.center) to[out=180, in=  0] (32.center);
\draw[] (32.center) to[out=180, in=  0] (33.center);
\end{pgfonlayer}
\end{tikzpicture}
 }
  \item The trace of         { \begin{tikzpicture}[baseline=(current bounding box.center)]
\begin{pgfonlayer}{nodelayer}
\node [] (0) at (-0.000,-0.250) {};
\node [] (1) at (-0.000,-0.750) {};
\node [] (2) at (-0.000,-1.250) {};
\node [shape=rectangle,fill=white,draw,label={[label position=center]$M$},inner xsep=0.1875cm,inner ysep=0.625cm] (3) at (-0.500,-0.750) {};
\node [] (4) at (-0.500,-0.250) {};
\node [] (5) at (-0.500,-0.750) {};
\node [] (6) at (-0.500,-1.250) {};
\node [] (7) at (-0.500,-0.250) {};
\node [] (8) at (-0.500,-0.750) {};
\node [] (9) at (-0.500,-1.250) {};
\node [] (10) at (-1.000,-0.250) {};
\node [] (11) at (-1.000,-0.750) {};
\node [] (12) at (-1.000,-1.250) {};
\end{pgfonlayer}
\begin{pgfonlayer}{edgelayer}
\draw[dashed] (0.center) to[out=180, in=  0] (7.center);
\draw[] (1.center) to[out=180, in=  0] (8.center);
\draw[] (2.center) to[out=180, in=  0] (9.center);
\draw[dashed] (4.center) to[out=180, in=  0] (10.center);
\draw[] (5.center) to[out=180, in=  0] (11.center);
\draw[] (6.center) to[out=180, in=  0] (12.center);
\end{pgfonlayer}
\end{tikzpicture}
 } is         { \begin{tikzpicture}[baseline=(current bounding box.center)]
\begin{pgfonlayer}{nodelayer}
\node [] (0) at (-0.000,-0.250) {};
\node [] (1) at (-0.000,-0.750) {};
\node [] (2) at (-0.000,-1.250) {};
\node [shape=rectangle,fill=white,draw,label={[label position=center]$M$},inner xsep=0.1875cm,inner ysep=0.625cm] (3) at (-0.500,-0.750) {};
\node [] (4) at (-0.500,-0.250) {};
\node [] (5) at (-0.500,-0.750) {};
\node [] (6) at (-0.500,-1.250) {};
\node [] (7) at (-0.500,-0.250) {};
\node [] (8) at (-0.500,-0.750) {};
\node [] (9) at (-0.500,-1.250) {};
\node [] (10) at (-1.000,-0.250) {};
\node [] (11) at (-1.000,-0.750) {};
\node [] (12) at (-1.000,-1.250) {};
\end{pgfonlayer}
\begin{pgfonlayer}{edgelayer}
\draw[dashed] (0.center) to[out=180, in=  0] (7.center);
\draw[dashed] (1.center) to[out=180, in=  0] (8.center);
\draw[] (2.center) to[out=180, in=  0] (9.center);
\draw[dashed] (4.center) to[out=180, in=  0] (10.center);
\draw[dashed] (5.center) to[out=180, in=  0] (11.center);
\draw[] (6.center) to[out=180, in=  0] (12.center);
\end{pgfonlayer}
\end{tikzpicture}
 }

\end{itemize}    

  \begin{proof}
    The main idea is that:
    \begin{itemize}
    \item $Q$ is indeed a traced prop. In particular the two
      compositions are well defined. The proof is not interesting and given in appendix \ref{app:proofalacon}
    \item The equations we took for defining the equivalence class, and the two products are the good one, as illustrated by the diagrammatic equations below.
    \end{itemize}
    Let $\widehat{P}$ be the traced completion of $P$. 
    First, consider the subcategory $R$ of $\widehat{P}$ consisting of all morphisms of the form $Tr_k \iota(M)$ for $M$ a morphism of $P$.
    It is easy to see that $R$ is actually a traced prop. By the universal property, $R = \widehat{P}$, therefore all morphisms of $\widehat{P}$ can be written in the form 
    $Tr_k\ \iota(M)$ for $M$ a morphism of $P$.
    
    Let $M$ be any morphism in a traced prop. All the following equations are true in any traced prop

        { \begin{tikzpicture}[baseline=(current bounding box.center)]
\begin{pgfonlayer}{nodelayer}
\node [] (0) at (-0.250,-0.500) {};
\node [] (1) at (-0.000,-1.250) {};
\node [] (2) at (-0.750,-0.500) {};
\node [] (3) at (-2.750,-0.500) {};
\path[dashed=None,draw=,color=black] (-1.250,-0.500) rectangle (-2.250,-1.500);
\node at (-1.750,-1.000) {};
\node [] (4) at (-1.750,-0.250) {};
\node [shape=rectangle,fill=white,draw,label={[label position=center]$M$},inner xsep=0.1875cm,inner ysep=0.375cm] (5) at (-1.500,-1.000) {};
\node [] (6) at (-1.500,-0.750) {};
\node [] (7) at (-1.500,-1.250) {};
\node [] (8) at (-1.500,-0.750) {};
\node [] (9) at (-1.500,-1.250) {};
\node [shape=rectangle,fill=white,draw,label={[label position=center]$g$},inner xsep=0.1875cm,inner ysep=0.1875cm] (10) at (-2.000,-0.750) {};
\node [] (11) at (-2.000,-0.750) {};
\node [] (12) at (-2.000,-0.750) {};
\node [] (13) at (-2.000,-1.250) {};
\node [] (14) at (-3.250,-0.500) {};
\node [] (15) at (-3.500,-1.250) {};
\node [label={[label position=center]=}] (16) at (-3.750,-0.750) {};
\node [] (17) at (-4.250,-0.500) {};
\node [] (18) at (-4.000,-1.250) {};
\node [] (19) at (-4.750,-0.500) {};
\node [] (20) at (-6.750,-0.500) {};
\path[dashed=None,draw=,color=black] (-5.250,-0.500) rectangle (-6.250,-1.500);
\node at (-5.750,-1.000) {};
\node [] (21) at (-5.750,-0.250) {};
\node [shape=rectangle,fill=white,draw,label={[label position=center]$g$},inner xsep=0.1875cm,inner ysep=0.1875cm] (22) at (-5.500,-0.750) {};
\node [] (23) at (-5.500,-0.750) {};
\node [] (24) at (-5.500,-0.750) {};
\node [] (25) at (-5.500,-1.250) {};
\node [shape=rectangle,fill=white,draw,label={[label position=center]$M$},inner xsep=0.1875cm,inner ysep=0.375cm] (26) at (-6.000,-1.000) {};
\node [] (27) at (-6.000,-0.750) {};
\node [] (28) at (-6.000,-1.250) {};
\node [] (29) at (-6.000,-0.750) {};
\node [] (30) at (-6.000,-1.250) {};
\node [] (31) at (-7.250,-0.500) {};
\node [] (32) at (-7.500,-1.250) {};
\end{pgfonlayer}
\begin{pgfonlayer}{edgelayer}
\draw[] (2.center) to[out=-89.9, in=  0] (8.center);
\draw[] (1.center) to[out=180, in=  0] (9.center);
\draw[] (6.center) to[out=180, in=  0] (12.center);
\draw[] (7.center) to[out=180, in=  0] (13.center);
\draw[] (11.center) to[out=180, in=-89.9] (3.center) to[out= 90, in=180] (4.center) to[out=  0, in= 90] (2.center);
\draw[] (13.center) to[out=180, in=  0] (15.center);
\draw[] (19.center) to[out=-89.9, in=  0] (24.center);
\draw[] (18.center) to[out=180, in=  0] (25.center);
\draw[] (23.center) to[out=180, in=  0] (29.center);
\draw[] (25.center) to[out=180, in=  0] (30.center);
\draw[] (27.center) to[out=180, in=-89.9] (20.center) to[out= 90, in=180] (21.center) to[out=  0, in= 90] (19.center);
\draw[] (28.center) to[out=180, in=  0] (32.center);
\end{pgfonlayer}
\end{tikzpicture}
 }\hfill
        { \begin{tikzpicture}[baseline=(current bounding box.center)]
\begin{pgfonlayer}{nodelayer}
\node [] (0) at (-0.250,-2.000) {};
\node [] (1) at (-0.000,-4.250) {};
\node [] (2) at (-0.750,-1.250) {};
\node [] (3) at (-0.750,-1.750) {};
\node [] (4) at (-0.750,-2.250) {};
\node [] (5) at (-0.750,-2.750) {};
\node [] (6) at (-3.750,-1.250) {};
\node [] (7) at (-3.750,-1.750) {};
\node [] (8) at (-3.750,-2.250) {};
\node [] (9) at (-3.750,-2.750) {};
\path[dashed=None,draw=,color=black] (-1.250,-2.000) rectangle (-3.250,-4.500);
\node at (-2.250,-3.250) {};
\node [] (10) at (-2.250,-0.250) {};
\node [] (11) at (-2.250,-0.750) {};
\node [] (12) at (-2.250,-1.250) {};
\node [] (13) at (-2.250,-1.750) {};
\node [] (14) at (-1.625,-2.250) {};
\node [] (15) at (-2.375,-3.250) {};
\node [] (16) at (-1.625,-2.750) {};
\node [] (17) at (-2.375,-2.750) {};
\node [] (18) at (-1.625,-3.250) {};
\node [] (19) at (-2.375,-2.250) {};
\node [] (20) at (-2.000,-3.750) {};
\node [] (21) at (-2.000,-4.250) {};
\node [] (22) at (-3.000,-2.250) {};
\node [shape=rectangle,fill=white,draw,label={[label position=center]$M$},inner xsep=0.1875cm,inner ysep=0.875cm] (23) at (-3.000,-3.500) {};
\node [] (24) at (-3.000,-2.750) {};
\node [] (25) at (-3.000,-3.250) {};
\node [] (26) at (-3.000,-3.750) {};
\node [] (27) at (-3.000,-4.250) {};
\node [] (28) at (-3.000,-2.750) {};
\node [] (29) at (-3.000,-3.250) {};
\node [] (30) at (-3.000,-3.750) {};
\node [] (31) at (-3.000,-4.250) {};
\node [] (32) at (-4.250,-2.000) {};
\node [] (33) at (-4.500,-4.250) {};
\node [label={[label position=center]=}] (34) at (-4.750,-2.250) {};
\node [] (35) at (-5.250,-2.000) {};
\node [] (36) at (-5.000,-3.750) {};
\node [] (37) at (-5.750,-1.500) {};
\node [] (38) at (-5.750,-2.000) {};
\node [] (39) at (-5.750,-2.500) {};
\node [] (40) at (-7.250,-1.500) {};
\node [] (41) at (-7.250,-2.000) {};
\node [] (42) at (-7.250,-2.500) {};
\path[dashed=None,draw=,color=black] (-6.250,-2.000) rectangle (-6.750,-4.000);
\node at (-6.500,-3.000) {};
\node [] (43) at (-6.500,-0.750) {};
\node [] (44) at (-6.500,-1.250) {};
\node [] (45) at (-6.500,-1.750) {};
\node [shape=rectangle,fill=white,draw,label={[label position=center]$M$},inner xsep=0.1875cm,inner ysep=0.875cm] (46) at (-6.500,-3.000) {};
\node [] (47) at (-6.500,-2.250) {};
\node [] (48) at (-6.500,-2.750) {};
\node [] (49) at (-6.500,-3.250) {};
\node [] (50) at (-6.500,-3.750) {};
\node [] (51) at (-6.500,-2.250) {};
\node [] (52) at (-6.500,-2.750) {};
\node [] (53) at (-6.500,-3.250) {};
\node [] (54) at (-6.500,-3.750) {};
\node [] (55) at (-7.750,-2.000) {};
\node [] (56) at (-8.000,-3.750) {};
\end{pgfonlayer}
\begin{pgfonlayer}{edgelayer}
\draw[] (2.center) to[out=-89.9, in= 49] (14.center);
\draw[] (14.center) to[out=-127, in= 53] (15.center);
\draw[] (3.center) to[out=-89.9, in= 49] (16.center);
\draw[] (16.center) to[out=180, in=  0] (17.center);
\draw[] (4.center) to[out=-89.9, in= 49] (18.center);
\draw[] (18.center) to[out=127, in=-52.9] (19.center);
\draw[] (5.center) to[out=-89.9, in=  0] (20.center);
\draw[] (1.center) to[out=180, in=  0] (21.center);
\draw[] (19.center) to[out=180, in=  0] (22.center);
\draw[] (17.center) to[out=180, in=  0] (28.center);
\draw[] (15.center) to[out=180, in=  0] (29.center);
\draw[] (20.center) to[out=180, in=  0] (30.center);
\draw[] (21.center) to[out=180, in=  0] (31.center);
\draw[] (22.center) to[out=180, in=-89.9] (6.center) to[out= 90, in=180] (10.center) to[out=  0, in= 90] (2.center);
\draw[] (24.center) to[out=180, in=-89.9] (7.center) to[out= 90, in=180] (11.center) to[out=  0, in= 90] (3.center);
\draw[] (25.center) to[out=180, in=-89.9] (8.center) to[out= 90, in=180] (12.center) to[out=  0, in= 90] (4.center);
\draw[] (26.center) to[out=180, in=-89.9] (9.center) to[out= 90, in=180] (13.center) to[out=  0, in= 90] (5.center);
\draw[] (27.center) to[out=180, in=  0] (33.center);
\draw[] (37.center) to[out=-89.9, in=  0] (51.center);
\draw[] (38.center) to[out=-89.9, in=  0] (52.center);
\draw[] (39.center) to[out=-89.9, in=  0] (53.center);
\draw[] (36.center) to[out=180, in=  0] (54.center);
\draw[] (47.center) to[out=180, in=-89.9] (40.center) to[out= 90, in=180] (43.center) to[out=  0, in= 90] (37.center);
\draw[] (48.center) to[out=180, in=-89.9] (41.center) to[out= 90, in=180] (44.center) to[out=  0, in= 90] (38.center);
\draw[] (49.center) to[out=180, in=-89.9] (42.center) to[out= 90, in=180] (45.center) to[out=  0, in= 90] (39.center);
\draw[] (50.center) to[out=180, in=  0] (56.center);
\end{pgfonlayer}
\end{tikzpicture}
 }

        { \begin{tikzpicture}[baseline=(current bounding box.center)]
\begin{pgfonlayer}{nodelayer}
\node [] (0) at (-0.250,-1.000) {};
\node [] (1) at (-0.000,-2.250) {};
\node [] (2) at (-0.000,-2.750) {};
\node [] (3) at (-0.000,-3.250) {};
\node [] (4) at (-0.750,-0.750) {};
\node [] (5) at (-0.750,-1.250) {};
\node [] (6) at (-3.750,-0.917) {};
\node [] (7) at (-3.750,-1.583) {};
\path[dashed=None,draw=,color=black] (-1.250,-1.000) rectangle (-3.250,-3.500);
\node at (-2.250,-2.250) {};
\node [] (8) at (-2.250,-0.250) {};
\node [] (9) at (-2.250,-0.750) {};
\node [] (10) at (-1.625,-1.250) {};
\node [] (11) at (-2.375,-2.750) {};
\node [] (12) at (-1.625,-1.750) {};
\node [] (13) at (-2.375,-1.750) {};
\node [] (14) at (-1.625,-2.250) {};
\node [] (15) at (-2.375,-2.250) {};
\node [] (16) at (-1.625,-2.750) {};
\node [] (17) at (-2.375,-1.250) {};
\node [] (18) at (-2.000,-3.250) {};
\node [] (19) at (-3.000,-1.250) {};
\node [shape=rectangle,fill=white,draw,label={[label position=center]$M$},inner xsep=0.1875cm,inner ysep=0.875cm] (20) at (-3.000,-2.500) {};
\node [] (21) at (-3.000,-2.000) {};
\node [] (22) at (-3.000,-3.000) {};
\node [] (23) at (-3.000,-1.750) {};
\node [] (24) at (-3.000,-2.250) {};
\node [] (25) at (-3.000,-2.750) {};
\node [] (26) at (-3.000,-3.250) {};
\node [] (27) at (-4.250,-1.050) {};
\node [] (28) at (-4.500,-2.450) {};
\node [] (29) at (-4.250,-3.150) {};
\node [label={[label position=center]=}] (30) at (-4.750,-1.750) {};
\node [] (31) at (-5.250,-1.000) {};
\node [] (32) at (-5.000,-1.750) {};
\node [] (33) at (-5.000,-2.250) {};
\node [] (34) at (-5.000,-2.750) {};
\node [] (35) at (-5.750,-1.000) {};
\node [] (36) at (-7.250,-1.250) {};
\path[dashed=None,draw=,color=black] (-6.250,-1.000) rectangle (-6.750,-3.000);
\node at (-6.500,-2.000) {};
\node [] (37) at (-6.500,-0.750) {};
\node [shape=rectangle,fill=white,draw,label={[label position=center]$M$},inner xsep=0.1875cm,inner ysep=0.875cm] (38) at (-6.500,-2.000) {};
\node [] (39) at (-6.500,-1.500) {};
\node [] (40) at (-6.500,-2.500) {};
\node [] (41) at (-6.500,-1.250) {};
\node [] (42) at (-6.500,-1.750) {};
\node [] (43) at (-6.500,-2.250) {};
\node [] (44) at (-6.500,-2.750) {};
\node [] (45) at (-7.750,-1.250) {};
\node [] (46) at (-8.000,-2.250) {};
\node [] (47) at (-7.750,-2.750) {};
\end{pgfonlayer}
\begin{pgfonlayer}{edgelayer}
\draw[] (4.center) to[out=-89.9, in= 30] (10.center);
\draw[] (10.center) to[out=-117, in= 63] (11.center);
\draw[] (5.center) to[out=-89.9, in= 30] (12.center);
\draw[] (12.center) to[out=180, in=  0] (13.center);
\draw[] (1.center) to[out=180, in=  0] (14.center);
\draw[] (14.center) to[out=180, in=  0] (15.center);
\draw[] (2.center) to[out=180, in=  0] (16.center);
\draw[] (16.center) to[out=117, in=-62.9] (17.center);
\draw[] (3.center) to[out=180, in=  0] (18.center);
\draw[] (17.center) to[out=180, in=  0] (19.center);
\draw[] (13.center) to[out=180, in=  0] (23.center);
\draw[] (15.center) to[out=180, in=  0] (24.center);
\draw[] (11.center) to[out=180, in=  0] (25.center);
\draw[] (18.center) to[out=180, in=  0] (26.center);
\draw[] (19.center) to[out=180, in=-89.9] (6.center) to[out= 90, in=180] (8.center) to[out=  0, in= 90] (4.center);
\draw[] (21.center) to[out=180, in=-89.9] (7.center) to[out= 90, in=180] (9.center) to[out=  0, in= 90] (5.center);
\draw[] (22.center) to[out=180, in=  0] (28.center);
\draw[] (35.center) to[out=-89.9, in=  0] (41.center);
\draw[] (32.center) to[out=180, in=  0] (42.center);
\draw[] (33.center) to[out=180, in=  0] (43.center);
\draw[] (34.center) to[out=180, in=  0] (44.center);
\draw[] (39.center) to[out=180, in=-89.9] (36.center) to[out= 90, in=180] (37.center) to[out=  0, in= 90] (35.center);
\draw[] (40.center) to[out=180, in=  0] (46.center);
\end{pgfonlayer}
\end{tikzpicture}
 }

        This implies that the map $F: [M,k] \mapsto Tr_k\ \iota(M)$ from $Q$ to $\widehat{P}$ is well defined, and surjective.

        Now the following equations are true in any traced prop:
        
        { \begin{tikzpicture}[baseline=(current bounding box.center)]
\begin{pgfonlayer}{nodelayer}
\node [] (0) at (-0.250,-1.400) {};
\node [] (1) at (-0.000,-3.150) {};
\node [] (2) at (-0.750,-1.000) {};
\node [] (3) at (-0.750,-1.500) {};
\node [] (4) at (-0.750,-2.000) {};
\node [] (5) at (-3.750,-1.000) {};
\node [] (6) at (-3.750,-1.500) {};
\node [] (7) at (-3.750,-2.000) {};
\path[dashed=None,draw=,color=black] (-1.250,-1.500) rectangle (-3.250,-3.500);
\node at (-2.250,-2.500) {};
\node [] (8) at (-2.250,-0.250) {};
\node [] (9) at (-2.250,-0.750) {};
\node [] (10) at (-2.250,-1.250) {};
\node [shape=rectangle,fill=white,draw,label={[label position=center]$N$},inner xsep=0.1875cm,inner ysep=0.375cm] (11) at (-1.500,-2.000) {};
\node [] (12) at (-1.500,-1.750) {};
\node [] (13) at (-1.500,-2.250) {};
\node [] (14) at (-1.500,-1.750) {};
\node [] (15) at (-1.500,-2.250) {};
\node [shape=rectangle,fill=white,draw,label={[label position=center]$M$},inner xsep=0.1875cm,inner ysep=0.375cm] (16) at (-1.500,-3.000) {};
\node [] (17) at (-1.500,-2.750) {};
\node [] (18) at (-1.500,-3.250) {};
\node [] (19) at (-1.500,-2.750) {};
\node [] (20) at (-1.500,-3.250) {};
\node [] (21) at (-2.500,-1.750) {};
\node [] (22) at (-2.125,-2.250) {};
\node [] (23) at (-2.875,-3.250) {};
\node [] (24) at (-2.125,-2.750) {};
\node [] (25) at (-2.875,-2.750) {};
\node [] (26) at (-2.125,-3.250) {};
\node [] (27) at (-2.875,-2.250) {};
\node [] (28) at (-4.250,-1.400) {};
\node [] (29) at (-4.500,-3.150) {};
\node [label={[label position=center]=}] (30) at (-4.750,-1.750) {};
\node [] (31) at (-5.000,-1.750) {};
\node [] (32) at (-5.750,-1.500) {};
\node [] (33) at (-7.250,-1.500) {};
\path[dashed=None,draw=,color=black] (-6.250,-1.500) rectangle (-6.750,-2.500);
\node at (-6.500,-2.000) {};
\node [] (34) at (-6.500,-1.250) {};
\node [shape=rectangle,fill=white,draw,label={[label position=center]$M$},inner xsep=0.1875cm,inner ysep=0.375cm] (35) at (-6.500,-2.000) {};
\node [] (36) at (-6.500,-1.750) {};
\node [] (37) at (-6.500,-2.250) {};
\node [] (38) at (-6.500,-1.750) {};
\node [] (39) at (-6.500,-2.250) {};
\node [] (40) at (-8.250,-1.500) {};
\node [] (41) at (-9.750,-1.500) {};
\path[dashed=None,draw=,color=black] (-8.750,-1.500) rectangle (-9.250,-2.500);
\node at (-9.000,-2.000) {};
\node [] (42) at (-9.000,-1.250) {};
\node [shape=rectangle,fill=white,draw,label={[label position=center]$N$},inner xsep=0.1875cm,inner ysep=0.375cm] (43) at (-9.000,-2.000) {};
\node [] (44) at (-9.000,-1.750) {};
\node [] (45) at (-9.000,-2.250) {};
\node [] (46) at (-9.000,-1.750) {};
\node [] (47) at (-9.000,-2.250) {};
\node [] (48) at (-10.500,-1.750) {};
\end{pgfonlayer}
\begin{pgfonlayer}{edgelayer}
\draw[] (2.center) to[out=-89.9, in=  0] (14.center);
\draw[] (3.center) to[out=-89.9, in=  0] (15.center);
\draw[] (4.center) to[out=-89.9, in=  0] (19.center);
\draw[] (1.center) to[out=180, in=  0] (20.center);
\draw[] (12.center) to[out=180, in=  0] (21.center);
\draw[] (13.center) to[out=180, in=  0] (22.center);
\draw[] (22.center) to[out=-127, in= 53] (23.center);
\draw[] (17.center) to[out=180, in=  0] (24.center);
\draw[] (24.center) to[out=180, in=  0] (25.center);
\draw[] (18.center) to[out=180, in=  0] (26.center);
\draw[] (26.center) to[out=127, in=-52.9] (27.center);
\draw[] (21.center) to[out=180, in=-89.9] (5.center) to[out= 90, in=180] (8.center) to[out=  0, in= 90] (2.center);
\draw[] (27.center) to[out=139, in=-89.9] (6.center) to[out= 90, in=180] (9.center) to[out=  0, in= 90] (3.center);
\draw[] (25.center) to[out=139, in=-89.9] (7.center) to[out= 90, in=180] (10.center) to[out=  0, in= 90] (4.center);
\draw[] (23.center) to[out=176, in=  0] (29.center);
\draw[] (32.center) to[out=-89.9, in=  0] (38.center);
\draw[] (31.center) to[out=180, in=  0] (39.center);
\draw[] (36.center) to[out=180, in=-89.9] (33.center) to[out= 90, in=180] (34.center) to[out=  0, in= 90] (32.center);
\draw[] (40.center) to[out=-89.9, in=  0] (46.center);
\draw[] (37.center) to[out=180, in=  0] (47.center);
\draw[] (44.center) to[out=180, in=-89.9] (41.center) to[out= 90, in=180] (42.center) to[out=  0, in= 90] (40.center);
\draw[] (45.center) to[out=180, in=  0] (48.center);
\end{pgfonlayer}
\end{tikzpicture}
 }

        { \begin{tikzpicture}[baseline=(current bounding box.center)]
\begin{pgfonlayer}{nodelayer}
\node [] (0) at (-0.250,-1.000) {};
\node [] (1) at (-0.000,-2.250) {};
\node [] (2) at (-0.000,-2.750) {};
\node [] (3) at (-0.750,-0.750) {};
\node [] (4) at (-0.750,-1.250) {};
\node [] (5) at (-3.250,-0.750) {};
\node [] (6) at (-3.250,-1.250) {};
\path[dashed=None,draw=,color=black] (-1.250,-1.000) rectangle (-2.750,-3.000);
\node at (-2.000,-2.000) {};
\node [] (7) at (-2.000,-0.250) {};
\node [] (8) at (-2.000,-0.750) {};
\node [] (9) at (-1.500,-1.250) {};
\node [] (10) at (-1.500,-2.000) {};
\node [] (11) at (-1.500,-2.750) {};
\node [shape=rectangle,fill=white,draw,label={[label position=center]$N$},inner xsep=0.1875cm,inner ysep=0.375cm] (12) at (-2.000,-1.500) {};
\node [] (13) at (-2.000,-1.250) {};
\node [] (14) at (-2.000,-1.750) {};
\node [] (15) at (-2.000,-1.250) {};
\node [] (16) at (-2.000,-1.750) {};
\node [shape=rectangle,fill=white,draw,label={[label position=center]$M$},inner xsep=0.1875cm,inner ysep=0.375cm] (17) at (-2.000,-2.500) {};
\node [] (18) at (-2.000,-2.250) {};
\node [] (19) at (-2.000,-2.750) {};
\node [] (20) at (-2.000,-2.250) {};
\node [] (21) at (-2.000,-2.750) {};
\node [] (22) at (-2.500,-1.250) {};
\node [] (23) at (-2.500,-2.000) {};
\node [] (24) at (-2.500,-2.750) {};
\node [] (25) at (-3.750,-1.000) {};
\node [] (26) at (-4.000,-2.250) {};
\node [] (27) at (-4.000,-2.750) {};
\node [label={[label position=center]=}] (28) at (-4.250,-1.500) {};
\node [] (29) at (-4.750,-0.500) {};
\node [] (30) at (-4.500,-1.250) {};
\node [] (31) at (-4.750,-2.000) {};
\node [] (32) at (-4.500,-2.750) {};
\node [] (33) at (-5.250,-0.500) {};
\node [] (34) at (-6.750,-0.500) {};
\path[dashed=None,draw=,color=black] (-5.750,-0.500) rectangle (-6.250,-1.500);
\node at (-6.000,-1.000) {};
\node [] (35) at (-6.000,-0.250) {};
\node [shape=rectangle,fill=white,draw,label={[label position=center]$N$},inner xsep=0.1875cm,inner ysep=0.375cm] (36) at (-6.000,-1.000) {};
\node [] (37) at (-6.000,-0.750) {};
\node [] (38) at (-6.000,-1.250) {};
\node [] (39) at (-6.000,-0.750) {};
\node [] (40) at (-6.000,-1.250) {};
\node [] (41) at (-5.250,-2.000) {};
\node [] (42) at (-6.750,-2.000) {};
\path[dashed=None,draw=,color=black] (-5.750,-2.000) rectangle (-6.250,-3.000);
\node at (-6.000,-2.500) {};
\node [] (43) at (-6.000,-1.750) {};
\node [shape=rectangle,fill=white,draw,label={[label position=center]$M$},inner xsep=0.1875cm,inner ysep=0.375cm] (44) at (-6.000,-2.500) {};
\node [] (45) at (-6.000,-2.250) {};
\node [] (46) at (-6.000,-2.750) {};
\node [] (47) at (-6.000,-2.250) {};
\node [] (48) at (-6.000,-2.750) {};
\node [] (49) at (-7.250,-0.500) {};
\node [] (50) at (-7.500,-1.250) {};
\node [] (51) at (-7.250,-2.000) {};
\node [] (52) at (-7.500,-2.750) {};
\end{pgfonlayer}
\begin{pgfonlayer}{edgelayer}
\draw[] (3.center) to[out=-89.9, in=  0] (9.center);
\draw[] (2.center) to[out=180, in=  0] (11.center);
\draw[] (9.center) to[out=180, in=  0] (15.center);
\draw[] (1.center) to[out=180, in=-44.9] (10.center) to[out=135, in=  0] (16.center);
\draw[] (4.center) to[out=-89.9, in= 45] (10.center) to[out=-135, in=  0] (20.center);
\draw[] (11.center) to[out=180, in=  0] (21.center);
\draw[] (13.center) to[out=180, in=  0] (22.center);
\draw[] (19.center) to[out=180, in=  0] (24.center);
\draw[] (22.center) to[out=180, in=-89.9] (5.center) to[out= 90, in=180] (7.center) to[out=  0, in= 90] (3.center);
\draw[] (18.center) to[out=180, in=-44.9] (23.center) to[out=135, in=-89.9] (6.center) to[out= 90, in=180] (8.center) to[out=  0, in= 90] (4.center);
\draw[] (14.center) to[out=180, in= 45] (23.center) to[out=-135, in=  0] (26.center);
\draw[] (24.center) to[out=180, in=  0] (27.center);
\draw[] (33.center) to[out=-89.9, in=  0] (39.center);
\draw[] (30.center) to[out=180, in=  0] (40.center);
\draw[] (37.center) to[out=180, in=-89.9] (34.center) to[out= 90, in=180] (35.center) to[out=  0, in= 90] (33.center);
\draw[] (41.center) to[out=-89.9, in=  0] (47.center);
\draw[] (32.center) to[out=180, in=  0] (48.center);
\draw[] (45.center) to[out=180, in=-89.9] (42.center) to[out= 90, in=180] (43.center) to[out=  0, in= 90] (41.center);
\draw[] (38.center) to[out=180, in=  0] (50.center);
\draw[] (46.center) to[out=180, in=  0] (52.center);
\end{pgfonlayer}
\end{tikzpicture}
 }

        This implies $(a)$ that the map $ M \mapsto [M,0]$ is a prop functor from $P$ and $Q$ and $(b)$ that $F(A B) = F(A) F(B)$ and $F(A \otimes B) = F(A) \otimes F(B)$. Along with $F(0) = 0$, $F(id) = id$, $F(\sigma) = \sigma$ and $tr F(f) = F(tr f)$, we obtain that $F$ is actually a (surjective) traced prop functor from our prop $Q$ to $\widehat{P}$.

        Now by the universal property, we have another functor $G$ from  $\widehat{P}$ to $Q$, satisfying $G \iota(M) = [M,0]$ for all  morphisms $M$ in $P$.

        This implies that $FG$ is the identity on $\iota(M)$ and therefore that $FG$ is the identity functor by the universal property of $\widehat{P}$. $F$ is therefore surjective and injective, and therefore an isomorphism.
\end{proof}    

  \subsection{Flow equivalence and strong shift equivalence}
  \begin{definition}
    Let $\mathcal{R}$ be a semiring.
    We say that $M$ and $N$ are Parry-Sullivan-equivalent, for short PS-equivalent, in $\mathcal{R}$ if $M \simeq N$ where $\simeq$ is the
    smallest equivalence relation s.t.:

  \begin{itemize}
  \item $RS \simeq RS$ for all (nonnecessarily square) matrices $R,S$.
  \item $\begin{pmatrix}
    a\\
    A\\
  \end{pmatrix} \simeq
    \begin{pmatrix}
      0 & 1 \\
      A & 0  \\
      a & 0 \\
    \end{pmatrix}
    $ for all row vectors $a$.
  \end{itemize}

  In the special case where $\mathcal{R}=\mathbb{Z}_+$ we will say that $M$ and $N$ are \emph{flow-equivalent}
\end{definition}    
  This relation on matrices has only been formulated for the special case of $\mathcal{R} = \mathbb{Z}_+$ by Parry and Sullivan \cite{parry75}.

\begin{corollary}[formal statement of Theorem~\ref{thm:z}]
  Suppose that $P = \matrices{\mathcal{R}}$ is the prop of matrices  with coefficients in a semiring $\mathcal{R}$.

  Let $M,N$ be two matrices of size $n\times n$ and $m\times m$ respectively.
  Then $[M,n] = [N,m]$ in the traced completion of $P$ iff $M$ and $N$ are PS-equivalent
\end{corollary}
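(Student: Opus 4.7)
The plan is to leverage the explicit description of the traced completion given in Proposition~\ref{prop:tracedcompletion}. Specializing to $0\to 0$ morphisms, a pair $(M,k)$ has $M\in P[k,k]$ a square matrix, and since $\sigma_{0,1}=\mathrm{id}_1$ the defining equivalence $\sim$ reduces to two generating moves: a sliding rule $[SR,q]=[RS,p]$ (with $R\colon q\to p$ and $S\colon p\to q$), and a symmetry rule $[M,k]=[\mathrm{id}\oplus M,k+1]$. The corollary thus amounts to showing that these two moves generate the same equivalence on square matrices as do the two PS axioms, which I will refer to as (a') $RS\simeq SR$ and (b') $\binom{a}{A}\simeq\bigl(\begin{smallmatrix}0 & 1\\A & 0\\a & 0\end{smallmatrix}\bigr)$.

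For the direction PS-equivalence $\Rightarrow$ equality in $\widehat P$, the axiom (a') is literally the sliding rule. For (b'), I would exhibit rectangular matrices $X\colon q\to q+1$ and $Y\colon q+1\to q$ (encoding the split of state~$1$ into an ``in'' state and an ``out'' state linked by a single wire) such that the left-hand side factors as $YX$ and the right-hand side as $XY$; the sliding rule then provides the equality in $\widehat P$.

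For the converse, every $\sim$-chain between $(M,n)$ and $(N,m)$ is a sequence of sliding and symmetry moves. Each sliding move is directly an instance of (a'), so the task reduces to turning each symmetry move $[M,k]\sim[\mathrm{id}\oplus M,k+1]$ into a PS-equivalence $M\simeq\mathrm{id}\oplus M$. The plan here is to apply (b') to $\mathrm{id}\oplus M$, written as $\binom{a_0}{A_0}$ with $a_0=(1,0,\dots,0)$ and $A_0=(0\mid M)$, and to combine the result with conjugation by permutations (a PS-equivalence via (a')) and with the compatibility of PS-equivalence with the direct sum $\oplus$ (a routine block-factorization argument from (a')). This produces $\mathrm{id}\oplus M\simeq M\oplus\bigl(\begin{smallmatrix}0 & 1\\1 & 0\end{smallmatrix}\bigr)$, from which a further chain of (a') and (b') steps is needed to reach $M$ itself.

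The main obstacle will be this last contraction: the asymmetry of (b') means that removing the extra identity block is not a direct application of any PS axiom, and one must chain several rectangular factorizations carefully to exhibit an explicit PS-equivalence witnessing $M\simeq\mathrm{id}\oplus M$. Once this reduction is established, a straightforward induction on the length of the $\sim$-chain converts each move into a PS step and delivers the full equivalence between $M$ and $N$, finishing the proof.
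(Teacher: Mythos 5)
You correctly identify the strategy---specialize the presentation of $\widehat P$ in Proposition~\ref{prop:tracedcompletion} to $0\to 0$ morphisms and match its two generating moves against the two PS axioms; that is exactly what the paper does. But both of your translations are wrong, and the second is fatal. First, the subdivision axiom $\binom{a}{A}\simeq\bigl(\begin{smallmatrix}0&1\\A&0\\a&0\end{smallmatrix}\bigr)$ cannot be realized as an instance of sliding $[YX,q]=[XY,p]$: already for $M=(2)$ the right-hand side is the invertible matrix $\bigl(\begin{smallmatrix}0&1\\2&0\end{smallmatrix}\bigr)$, while any product $XY$ with $X$ of size $2\times 1$ has rank at most $1$; in general, if $M$ is invertible of size $n$ the subdivided matrix is invertible of size $n+1$, so it cannot equal $XY$ with $X$ of size $(n+1)\times n$. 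Conceptually, if subdivision were a consequence of $RS\simeq SR$ alone, PS-equivalence would coincide with strong shift equivalence, which it does not. Second, the other generating move of $\sim$ is not $[M,k]\sim[\mathrm{id}\oplus M,k+1]$. Such a rule would be unsound: by strength, $tr_{k+1}(\mathrm{id}\otimes M)$ equals the scalar $tr(\mathrm{id})$ (a circle) tensored with $tr_k(M)$, and the circle is not the empty morphism in a general traced prop, so the functor $F$ of Proposition~\ref{prop:tracedcompletion} would fail to be well defined. Worse, your converse direction then sets out to prove $M\simeq\mathrm{id}\oplus M$, which is false: $\det(I-M)$ is a flow-equivalence invariant and $\det(I-(\mathrm{id}\oplus M))=0\neq\det(I-M)$ in general, so no chain of applications of (a') and (b') can produce this equivalence.

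The crossing in the second rule (visible in the accompanying diagrams, and the reason the formula contains a symmetry at all) is the whole point: the new traced wire is spliced \emph{in series} into the first traced wire, so that tracing it is an instance of yanking rather than the creation of a disjoint loop. On $0\to 0$ morphisms this move splits vertex $1$ of the graph of $M$ into an ``in'' vertex and an ``out'' vertex joined by a single edge, and after a permutation of indices (itself an instance of sliding with $g$ a permutation matrix) the resulting matrix is precisely $\bigl(\begin{smallmatrix}0&1\\A&0\\a&0\end{smallmatrix}\bigr)$. With this corrected dictionary the two generating sets coincide move for move, both directions of the corollary are immediate, and none of the auxiliary reductions you propose is needed.
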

It suffices to remark that the equation of $\sim$ in Proposition~\ref{def:matricesintegers} coincides with the equations of $\simeq$  given above in the case where there are no inputs and outputs.

For the next theorem, we need :

\begin{definition}
    Let $\mathcal{R}$ be a semiring.
    We say that $M$ and $N$ are strong shift equivalent in $\mathcal{R}$ if $M \equiv N$ where $\equiv$ is the
    smallest equivalence relation on matrices with coefficients in $\mathcal{R}$ s.t. $RS \equiv SR$ for all (nonnecessarily square) matrices $R,S$ with coefficients in  $\mathcal{R}$ .
\end{definition}

\begin{theorem}
  Let $M, N$ be two matrices with nonnegative integer coefficients.
  
  Then $t M $ and $ t N$ are PS-equivalent in the semiring $\mathcal{R} = \mathbb{Z}_+[t]$ iff  $M$ and $N$ are strong shift equivalent in $\mathbb{Z}_+$.
\end{theorem}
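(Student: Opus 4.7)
The backward direction is immediate: given a strong shift equivalence chain $M = M_0, M_1, \ldots, M_k = N$ in $\mathbb{Z}_+$ witnessed by factorisations $M_i = R_i S_i$ and $M_{i+1} = S_i R_i$, rescaling $R_i \mapsto tR_i$ gives $tM_i = (tR_i)S_i$ and $tM_{i+1} = S_i(tR_i)$, an SSE step (and hence a PS step) in $\mathbb{Z}_+[t]$; concatenating produces $tM \simeq tN$.

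For the forward direction, the key combinatorial tool is a \emph{degree decomposition lemma}: since $\mathbb{Z}_+[t]$ admits no additive cancellation, any factorisation $tK = PQ$ in $\mathbb{Z}_+[t]$ forces each nonzero summand $P_{ir}Q_{rj}$ of the matrix product to be a monomial of degree exactly $1$. Consequently, for each intermediate index $r$, the column $r$ of $P$ is homogeneous of some degree $a_r \in \{0,1\}$ and the row $r$ of $Q$ is homogeneous of degree $1 - a_r$, giving a canonical splitting $P = P^{(0)} + tP^{(1)}$, $Q = Q^{(0)} + tQ^{(1)}$ with $P^{(\varepsilon)}, Q^{(\varepsilon)} \in \mathbb{Z}_+$, satisfying $P^{(0)}Q^{(0)} = P^{(1)}Q^{(1)} = 0$ and $K = P^{(0)}Q^{(1)} + P^{(1)}Q^{(0)}$.

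Applied to a single SSE step $tM = PQ \simeq QP = tN$ between two $tK$-form matrices, the lemma applied to both $PQ$ and $QP$ yields also $Q^{(0)}P^{(0)} = Q^{(1)}P^{(1)} = 0$ and $N = Q^{(0)}P^{(1)} + Q^{(1)}P^{(0)}$. Writing $M = \bigl(P^{(0)}\ P^{(1)}\bigr)\bigl(\begin{smallmatrix}Q^{(1)}\\ Q^{(0)}\end{smallmatrix}\bigr)$, the opposite product $\bigl(\begin{smallmatrix}Q^{(1)}\\ Q^{(0)}\end{smallmatrix}\bigr)\bigl(P^{(0)}\ P^{(1)}\bigr)$ collapses to the block-diagonal matrix $\mathrm{diag}(Q^{(1)}P^{(0)}, Q^{(0)}P^{(1)})$ thanks to the vanishing relations, giving an SSE step in $\mathbb{Z}_+$ from $M$ to this block-diagonal. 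An analogous computation shows $N$ is SSE to $\mathrm{diag}(P^{(1)}Q^{(0)}, P^{(0)}Q^{(1)})$. Applying the easy SSE identities $\mathrm{diag}(XY, C) \equiv \mathrm{diag}(YX, C)$ (witnessed by $\mathrm{diag}(X, I)$ and $\mathrm{diag}(Y, C)$) and $\mathrm{diag}(A, B) \equiv \mathrm{diag}(B, A)$ (witnessed by the anti-diagonal block matrices with $A$, $B$, $I_m$, $I_n$) block-by-block then shows the two block-diagonal matrices are SSE to each other, concluding $M \equiv N$ in $\mathbb{Z}_+$.

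The main obstacle is extending this one-step analysis to arbitrary PS-chains from $tM$ to $tN$, which may use symbol-splitting moves and pass through intermediate matrices not of the $tK$-form. I plan to address this by a normalisation argument: any symbol splitting introduces degree-$0$ entries (the distinguished ``$1$'' in the rule) that cannot survive into a $tK$-form endpoint and must be eliminated by subsequent moves; such ``eliminating subchains'' can be replaced by composite SSE moves between $tK$-form matrices by pushing the degree-decomposition analysis through the whole subchain. Reorganising the chain in this way reduces the general case to iterated applications of the one-step argument. Showing that every symbol splitting can be uniformly absorbed is expected to be the most technically delicate part of the proof.
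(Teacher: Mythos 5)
Your backward direction is fine, and your one-step degree-decomposition analysis is essentially sound \emph{provided both} $PQ$ \emph{and} $QP$ are assumed to be $t$ times an integer matrix (you need that to get all four vanishing relations), and provided you dispose of the degenerate cases: an intermediate index $r$ whose row of $Q$ is zero puts no constraint on column $r$ of $P$, and a non-monomial entry such as $1+t$ forces the complementary row/column to vanish; such indices have to be deleted, and deleting them changes the \emph{other} product, so even this needs a sentence. But the forward direction has a genuine gap exactly where you flag it, and it is not a routine normalisation: PS-equivalence is the transitive closure of the two generating moves, and a chain from $tM$ to $tN$ will in general pass through matrices that are not of the form $tK$. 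Already a single legal SSE step leaves that class --- for $M=(2)$ one has $t(2) = (1\ \ t)\left(\begin{smallmatrix} t \\ 1 \end{smallmatrix}\right)$, whose reversed product is $\left(\begin{smallmatrix} t & t^2 \\ 1 & t \end{smallmatrix}\right)$, with a constant entry and a degree-$2$ entry --- and the symbol-splitting move injects a constant entry by design. Your degree lemma says nothing about such intermediates, and ``pushing the decomposition through the whole subchain'' is not an argument; the deferred normalisation is the actual content of the theorem, not a technical afterthought.

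The paper sidesteps this globalisation problem entirely by working with Boyle--Wagoner \emph{positive equivalence} of $I-tM$ and $I-tN$: one checks that each generating move of PS-equivalence (the $RS\equiv SR$ move, via equations (4.5)--(4.8) of Boyle--Wagoner, and the splitting move, via an explicit positive row operation) preserves positive equivalence of $I-(\cdot)$. Since positive equivalence is an equivalence relation defined on \emph{all} matrices over $\mathbb{Z}_+[t]$, arbitrary intermediate matrices in the chain are absorbed for free, and the Boyle--Wagoner theorem that positive equivalence of $I-tM$ and $I-tN$ is equivalent to strong shift equivalence of $M$ and $N$ over $\mathbb{Z}_+$ closes the loop. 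Unless you either carry out your normalisation in full detail or import a result of comparable strength, the forward direction is not established; I would recommend restructuring your argument around an invariant that, like positive equivalence, is stable under every move of the chain rather than only under moves between matrices of the special form $tK$.
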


\begin{proof}
First, it is clear that if $M,N$ are strong shift equivalent in $\mathbb{Z}_+$, then $tM$ and $tN$ are strong shift equivalent, and therefore PS-equivalent, in $\mathbb{Z}_+[t]$.

For two matrices $M,N \in \mathbb{Z}_+[t]$, say that $I-M$ and $I-N$ are positive equivalent if 
there is a finite sequence of matrices $M_1 \dots M_n \in \mathbb{Z}_+[t]$ s.t
\begin{itemize}
\item $M_1 = M$
\item $M_n = N$
\item for all $i \leq n$, either   $I-M_i$ is obtained from $I-M_{i-1}$ by a row/column operation\footnote{We include in row/column operations the ability to add one dimension to the matrix. The original article \cite{boylewagoner} does not use this property, as it deals with infinite matrices with finitely many nonzero coefficients. This is of course equivalent.} with coefficients in $\mathbb{Z}_+[t]$, or
  $I-M_{i-1}$ is obtained from $I-M_i$ by a row/column operation with coefficients in $\mathbb{Z}_+[t]$.  
\end{itemize}
(Notice that $I-M_i$ lives  in $\mathbb{Z}[t]$, not in $\mathbb{Z}_+[t]$).

By \cite{boylewagoner}, $M$ and $N$ are strong shift equivalent in $\mathbb{Z}_+$ iff  $I-tM$ and $I-tN$ are positive equivalent.
Therefore it suffices to prove that if $M$ and $N$ are PS-equivalent in $\mathbb{Z}_+[t]$, then $I-M$ and $I-N$ are positive equivalent.

Using the equations (4.5) to (4.8) of \cite{boylewagoner} for $R,S \in \mathbb{Z}_+[t]$ and the parameter $t$ (in their paper) equal to $1$ , we obtain easily  that $I-RS$ and $\begin{pmatrix} I & 0 \\ 0 & I-SR\end{pmatrix} $ are positive equivalent, from which it follows easily that $I-RS$ and $I-SR$ are positive equivalent.

  Second, let $M = \begin{pmatrix}
    a\\
    A\\
  \end{pmatrix} $.
  With a suitable choice of $R$ and $S$, one can see that $I-M$  is positive-equivalent to $I-M'$ with
$M' = \begin{pmatrix}
    a & 0 \\
    A & 0 \\
    a & 0 
  \end{pmatrix} $  and therefore positive-equivalent to $I-N$ where $N = \begin{pmatrix}
    0 & 1 \\
    A & 0 \\
    a & 0 \\
  \end{pmatrix} $, as $I-M'$  is obtained from $I-N$ by adding the last row to the first row.

Therefore PS-equivalence implies positive equivalence\footnote{There is a generalization of positive-equivalence to any semiring included in some ring, and one can see easily that PS-equivalence always imply positive equivalence. In fact, these notions coincide for rings, but in a general semiring,  positive equivalence do not imply PS-equivalence.}.
\end{proof}

\begin{corollary}[formal statement of Theorem~\ref{thm:zt}]
  Suppose that $P=\matrices{\mathbb{Z}_+[t]}$ is the prop of matrices with coefficients in a semiring $\mathbb{Z}_+[t]$.

  Let $M,N$ be two matrices with nonnegative integer coefficients of size $n\times n$ and $m\times m$ respectively.
  Then $[tM,n] = [tN,m]$ in the traced completion of $P$ iff $M$ and $N$ are strong shift equivalent.
\end{corollary}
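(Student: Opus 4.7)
The plan is to assemble this corollary as an immediate consequence of two results already established in the excerpt. The statement is essentially a chaining of equivalences: we translate equality of traces in the prop-theoretic sense into PS-equivalence over the polynomial semiring, and then translate PS-equivalence over $\mathbb{Z}_+[t]$ (applied to the matrices $tM$, $tN$) into strong shift equivalence over $\mathbb{Z}_+$.

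First I would invoke the formal statement of Theorem~\ref{thm:z}, which applies to $P = \matrices{\mathcal{R}}$ for \emph{any} semiring $\mathcal{R}$. Specializing to $\mathcal{R} = \mathbb{Z}_+[t]$ and to the square matrices $tM$ and $tN$ (which live in $\matrices{\mathbb{Z}_+[t]}$ and have sizes $n\times n$ and $m\times m$ respectively), one obtains
\[
[tM,n] = [tN,m] \text{ in } \widehat{\matrices{\mathbb{Z}_+[t]}} \iff tM \text{ and } tN \text{ are PS-equivalent in } \mathbb{Z}_+[t].
\]

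Next I would apply the theorem immediately preceding the corollary, which states precisely that $tM$ and $tN$ are PS-equivalent in $\mathbb{Z}_+[t]$ if and only if $M$ and $N$ are strong shift equivalent in $\mathbb{Z}_+$. Composing the two equivalences yields exactly the claimed biconditional.

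There is no real obstacle here since all the hard work is already done: the categorical-to-combinatorial bridge lives in the formal statement of Theorem~\ref{thm:z}, and the combinatorial translation from PS-equivalence (over $\mathbb{Z}_+[t]$, for matrices of the form $tM$) to strong shift equivalence (over $\mathbb{Z}_+$) is handled by the Boyle--Wagoner-based theorem just established. The only thing to verify is that both lemmas apply under the stated hypotheses, which is immediate: $\mathbb{Z}_+[t]$ is a semiring (so the $\mathcal{R}$-version of Theorem~\ref{thm:z} applies), and $M, N$ having coefficients in $\mathbb{Z}_+$ is exactly the hypothesis of the PS-vs-strong-shift theorem. The proof is therefore a one-line chain of \emph{iff}s.
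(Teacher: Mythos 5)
Your proof is correct and matches the paper's (implicit) argument exactly: the corollary is obtained by specializing the general-semiring corollary (the formal statement of Theorem~\ref{thm:z}) to $\mathcal{R} = \mathbb{Z}_+[t]$ applied to $tM$ and $tN$, and then chaining with the theorem that PS-equivalence of $tM, tN$ in $\mathbb{Z}_+[t]$ is equivalent to strong shift equivalence of $M, N$ in $\mathbb{Z}_+$. Nothing further is needed.
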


\section{Consequences and applications}

Deciding strong shift equivalent is the main open problem of symbolic dynamics, and it is not clear that the new reformulation of the problem in terms of category theory would imply decidability or undecidability of the problem.
However it can be used to obtain new invariants, or to recover in a natural way existing invariants.
An \emph{invariant} of strong shift equivalence is a quantity (a real number, a group...) associated to a matrix $M$ s.t. two matrices that are strong shift equivalent have the same invariant. An invariant is never enough to prove that two matrices are strong shift equivalent, but a well chosen invariant might be used to prove that two matrices are not strong shift equivalent.

Here is our method to obtain new invariants.
Consider a category $\mathfrak{C}$ in the wild, which contains some mathematical items which satisfy the equations of Definition~\ref{def:matricesintegers} and Proposition~\ref{def:matricespolynomial}, and suppose the category is traced.
For the ease of the reader, we have recapitulated the most important equations in Fig~\ref{fig:equations}.

\begin{figure}
\[\begin{array}{lll}    
    {\begin{tikzpicture}[baseline=(current bounding box.center)]
\begin{pgfonlayer}{nodelayer}
\node [] (0) at (-0.000,-0.500) {};
\node [style=monoid] (1) at (-0.500,-0.500) {};
\node [] (2) at (-1.000,-0.250) {};
\node [] (3) at (-1.500,-0.250) {};
\node [style=monoid] (4) at (-1.000,-0.750) {};
\node [] (5) at (-1.500,-0.625) {};
\node [] (6) at (-1.500,-0.875) {};
\node [label={[label position=center]=}] (7) at (-1.750,-0.500) {};
\node [] (8) at (-2.000,-0.500) {};
\node [style=monoid] (9) at (-2.500,-0.500) {};
\node [style=monoid] (10) at (-3.000,-0.250) {};
\node [] (11) at (-3.500,-0.125) {};
\node [] (12) at (-3.500,-0.375) {};
\node [] (13) at (-3.000,-0.750) {};
\node [] (14) at (-3.500,-0.750) {};
\end{pgfonlayer}
\begin{pgfonlayer}{edgelayer}
\draw[] (0.center) to[out=180, in=  0] (1.center);
\draw[] (1.center) to[out=135, in=  0] (2.center);
\draw[] (2.center) to[out=180, in=  0] (3.center);
\draw[] (1.center) to[out=-135, in=  0] (4.center);
\draw[] (4.center) to[out=153, in=  0] (5.center);
\draw[] (4.center) to[out=-153, in=  0] (6.center);
\draw[] (8.center) to[out=180, in=  0] (9.center);
\draw[] (9.center) to[out=135, in=  0] (10.center);
\draw[] (10.center) to[out=153, in=  0] (11.center);
\draw[] (10.center) to[out=-153, in=  0] (12.center);
\draw[] (9.center) to[out=-135, in=  0] (13.center);
\draw[] (13.center) to[out=180, in=  0] (14.center);
\end{pgfonlayer}
\end{tikzpicture}
 } &     {\begin{tikzpicture}[baseline=(current bounding box.center)]
\begin{pgfonlayer}{nodelayer}
\node [] (0) at (-0.000,-0.500) {};
\node [style=monoid] (1) at (-0.500,-0.500) {};
\node [] (2) at (-1.000,-0.250) {};
\node [] (3) at (-1.000,-0.750) {};
\node [label={[label position=center]=}] (4) at (-1.250,-0.500) {};
\node [] (5) at (-1.500,-0.500) {};
\node [style=monoid] (6) at (-2.000,-0.500) {};
\node [] (7) at (-2.500,-0.500) {};
\node [] (8) at (-3.000,-0.250) {};
\node [] (9) at (-3.000,-0.750) {};
\end{pgfonlayer}
\begin{pgfonlayer}{edgelayer}
\draw[] (0.center) to[out=180, in=  0] (1.center);
\draw[] (1.center) to[out=135, in=  0] (2.center);
\draw[] (1.center) to[out=-135, in=  0] (3.center);
\draw[] (5.center) to[out=180, in=  0] (6.center);
\draw[] (6.center) to[out=-135, in=-44.9] (7.center) to[out=135, in=  0] (8.center);
\draw[] (6.center) to[out=135, in= 45] (7.center) to[out=-135, in=  0] (9.center);
\end{pgfonlayer}
\end{tikzpicture}
 }\\
    \\
      {\begin{tikzpicture}[baseline=(current bounding box.center)]
\begin{pgfonlayer}{nodelayer}
\node [] (0) at (-0.000,-0.250) {};
\node [] (1) at (-0.500,-0.250) {};
\node [] (2) at (-0.000,-0.625) {};
\node [] (3) at (-0.000,-0.875) {};
\node [style=bmonoid] (4) at (-0.500,-0.750) {};
\node [style=bmonoid] (5) at (-1.000,-0.500) {};
\node [] (6) at (-1.500,-0.500) {};
\node [label={[label position=center]=}] (7) at (-1.750,-0.500) {};
\node [] (8) at (-2.000,-0.125) {};
\node [] (9) at (-2.000,-0.375) {};
\node [style=bmonoid] (10) at (-2.500,-0.250) {};
\node [] (11) at (-2.000,-0.750) {};
\node [] (12) at (-2.500,-0.750) {};
\node [style=bmonoid] (13) at (-3.000,-0.500) {};
\node [] (14) at (-3.500,-0.500) {};
\end{pgfonlayer}
\begin{pgfonlayer}{edgelayer}
\draw[] (0.center) to[out=180, in=  0] (1.center);
\draw[] (2.center) to[out=180, in=26.6] (4.center);
\draw[] (3.center) to[out=180, in=-26.5] (4.center);
\draw[] (1.center) to[out=180, in= 45] (5.center);
\draw[] (4.center) to[out=180, in=-44.9] (5.center);
\draw[] (5.center) to[out=180, in=  0] (6.center);
\draw[] (8.center) to[out=180, in=26.6] (10.center);
\draw[] (9.center) to[out=180, in=-26.5] (10.center);
\draw[] (11.center) to[out=180, in=  0] (12.center);
\draw[] (10.center) to[out=180, in= 45] (13.center);
\draw[] (12.center) to[out=180, in=-44.9] (13.center);
\draw[] (13.center) to[out=180, in=  0] (14.center);
\end{pgfonlayer}
\end{tikzpicture}
 }&    {\begin{tikzpicture}[baseline=(current bounding box.center)]
\begin{pgfonlayer}{nodelayer}
\node [] (0) at (-0.000,-0.250) {};
\node [] (1) at (-0.000,-0.750) {};
\node [style=bmonoid] (2) at (-0.500,-0.500) {};
\node [] (3) at (-1.000,-0.500) {};
\node [label={[label position=center]=}] (4) at (-1.250,-0.500) {};
\node [] (5) at (-1.500,-0.250) {};
\node [] (6) at (-1.500,-0.750) {};
\node [] (7) at (-2.000,-0.500) {};
\node [style=bmonoid] (8) at (-2.500,-0.500) {};
\node [] (9) at (-3.000,-0.500) {};
\end{pgfonlayer}
\begin{pgfonlayer}{edgelayer}
\draw[] (0.center) to[out=180, in= 45] (2.center);
\draw[] (1.center) to[out=180, in=-44.9] (2.center);
\draw[] (2.center) to[out=180, in=  0] (3.center);
\draw[] (6.center) to[out=180, in=-44.9] (7.center) to[out=135, in= 45] (8.center);
\draw[] (5.center) to[out=180, in= 45] (7.center) to[out=-135, in=-44.9] (8.center);
\draw[] (8.center) to[out=180, in=  0] (9.center);
\end{pgfonlayer}
\end{tikzpicture}
 }\\
      \\
\multicolumn{2}{c}{\begin{tikzpicture}
\begin{pgfonlayer}{nodelayer}
\node [] (0) at (-0.000,-0.500) {};
\node [] (1) at (-0.000,-1.500) {};
\node [style=bmonoid] (2) at (-0.500,-1.000) {};
\node [style=monoid] (3) at (-1.000,-1.000) {};
\node [] (4) at (-1.500,-0.500) {};
\node [] (5) at (-1.500,-1.500) {};
\node [label={[label position=center]=}] (6) at (-1.750,-1.000) {};
\node [] (7) at (-2.000,-0.500) {};
\node [style=monoid] (8) at (-2.500,-0.500) {};
\node [] (9) at (-2.000,-1.500) {};
\node [style=monoid] (10) at (-2.500,-1.500) {};
\node [] (11) at (-3.000,-0.250) {};
\node [] (12) at (-3.000,-1.000) {};
\node [] (13) at (-3.000,-1.750) {};
\node [style=bmonoid] (14) at (-3.500,-0.500) {};
\node [] (15) at (-4.000,-0.500) {};
\node [style=bmonoid] (16) at (-3.500,-1.500) {};
\node [] (17) at (-4.000,-1.500) {};
\end{pgfonlayer}
\begin{pgfonlayer}{edgelayer}
\draw[] (0.center) to[out=180, in=63.4] (2.center);
\draw[] (1.center) to[out=180, in=-63.3] (2.center);
\draw[] (2.center) to[out=180, in=  0] (3.center);
\draw[] (3.center) to[out=117, in=  0] (4.center);
\draw[] (3.center) to[out=-116, in=  0] (5.center);
\draw[] (7.center) to[out=180, in=  0] (8.center);
\draw[] (9.center) to[out=180, in=  0] (10.center);
\draw[] (8.center) to[out=135, in=  0] (11.center);
\draw[] (10.center) to[out=-135, in=  0] (13.center);
\draw[] (11.center) to[out=180, in= 45] (14.center);
\draw[] (10.center) to[out=135, in=-44.9] (12.center) to[out=135, in=-44.9] (14.center);
\draw[] (14.center) to[out=180, in=  0] (15.center);
\draw[] (8.center) to[out=-135, in= 45] (12.center) to[out=-135, in= 45] (16.center);
\draw[] (13.center) to[out=180, in=-44.9] (16.center);
\draw[] (16.center) to[out=180, in=  0] (17.center);
\end{pgfonlayer}
\end{tikzpicture}
 } \\
            {\begin{tikzpicture}
\begin{pgfonlayer}{nodelayer}
\node [] (0) at (-0.000,-0.500) {};
\node [style=monoid] (1) at (-0.500,-0.500) {};
\node [shape=rectangle,draw,fill=white] (2) at (-1.000,-0.250) {};
\node [] (3) at (-1.500,-0.250) {};
\node [shape=rectangle,draw,fill=white] (4) at (-1.000,-0.750) {};
\node [] (5) at (-1.500,-0.750) {};
\node [label={[label position=center]=}] (6) at (-1.750,-0.500) {};
\node [] (7) at (-2.000,-0.500) {};
\node [shape=rectangle,draw,fill=white] (8) at (-2.500,-0.500) {};
\node [style=monoid] (9) at (-3.000,-0.500) {};
\node [] (10) at (-3.500,-0.250) {};
\node [] (11) at (-3.500,-0.750) {};
\end{pgfonlayer}
\begin{pgfonlayer}{edgelayer}
\draw[] (0.center) to[out=180, in=  0] (1.center);
\draw[] (1.center) to[out=135, in=  0] (2.center);
\draw[] (2.center) to[out=180, in=  0] (3.center);
\draw[] (1.center) to[out=-135, in=  0] (4.center);
\draw[] (4.center) to[out=180, in=  0] (5.center);
\draw[] (7.center) to[out=180, in=  0] (8.center);
\draw[] (8.center) to[out=180, in=  0] (9.center);
\draw[] (9.center) to[out=135, in=  0] (10.center);
\draw[] (9.center) to[out=-135, in=  0] (11.center);
\end{pgfonlayer}
\end{tikzpicture}
 } &    {\begin{tikzpicture}
\begin{pgfonlayer}{nodelayer}
\node [] (0) at (-0.000,-0.250) {};
\node [] (1) at (-0.000,-0.750) {};
\node [style=bmonoid] (2) at (-0.500,-0.500) {};
\node [shape=rectangle,draw,fill=white] (3) at (-1.000,-0.500) {};
\node [] (4) at (-1.500,-0.500) {};
\node [label={[label position=center]=}] (5) at (-1.750,-0.500) {};
\node [] (6) at (-2.000,-0.250) {};
\node [shape=rectangle,draw,fill=white] (7) at (-2.500,-0.250) {};
\node [] (8) at (-2.000,-0.750) {};
\node [shape=rectangle,draw,fill=white] (9) at (-2.500,-0.750) {};
\node [style=bmonoid] (10) at (-3.000,-0.500) {};
\node [] (11) at (-3.500,-0.500) {};
\end{pgfonlayer}
\begin{pgfonlayer}{edgelayer}
\draw[] (0.center) to[out=180, in= 45] (2.center);
\draw[] (1.center) to[out=180, in=-44.9] (2.center);
\draw[] (2.center) to[out=180, in=  0] (3.center);
\draw[] (3.center) to[out=180, in=  0] (4.center);
\draw[] (6.center) to[out=180, in=  0] (7.center);
\draw[] (8.center) to[out=180, in=  0] (9.center);
\draw[] (7.center) to[out=180, in= 45] (10.center);
\draw[] (9.center) to[out=180, in=-44.9] (10.center);
\draw[] (10.center) to[out=180, in=  0] (11.center);
\end{pgfonlayer}
\end{tikzpicture}
 }\\
\end{array}
\]

\caption{Some of the equations that should satisfy the black, white diamonds and the rectangle square in the category so that we can apply the whole procedure. Equations involving the morphisms $0\to 1$ and $1 \to 0$ are not shown for simplicity. The full list is presented earlier in Definition~\ref{def:matricesintegers} and Proposition~\ref{def:matricespolynomial}}.
\label{fig:equations}
\end{figure}
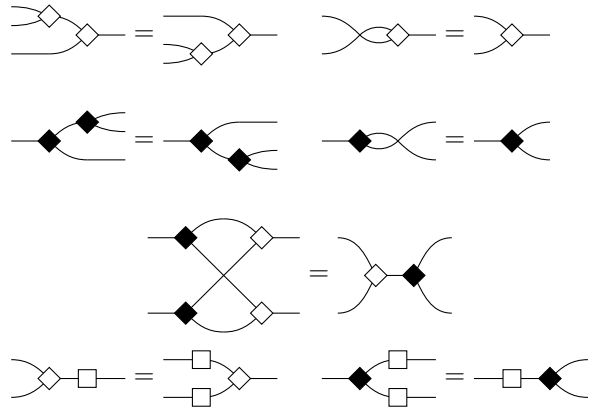

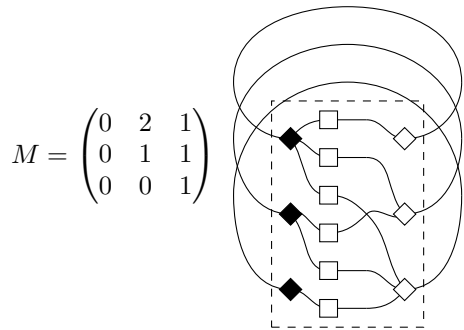
\begin{figure}
  $M = \begin{pmatrix}
  0 & 2 & 1 \\
  0 & 1 & 1 \\
  0 & 0 & 1 \\
\end{pmatrix}$ 
\begin{tikzpicture}[baseline=(current bounding box.center)]
\begin{pgfonlayer}{nodelayer}
\node [] (0) at (-0.500,-1.250) {};
\node [] (1) at (-0.500,-2.000) {};
\node [] (2) at (-0.500,-2.750) {};
\node [] (3) at (-3.500,-1.250) {};
\node [] (4) at (-3.500,-2.000) {};
\node [] (5) at (-3.500,-2.750) {};
\path[dashed=None,draw=,color=black] (-1.000,-1.500) rectangle (-3.000,-4.500);
\node at (-2.000,-3.000) {};
\node [] (6) at (-2.000,-0.250) {};
\node [] (7) at (-2.000,-0.750) {};
\node [] (8) at (-2.000,-1.250) {};
\node [style=monoid] (9) at (-1.250,-2.000) {};
\node [style=monoid] (10) at (-1.250,-3.000) {};
\node [style=monoid] (11) at (-1.250,-4.000) {};
\node [] (12) at (-1.750,-1.750) {};
\node [] (13) at (-1.750,-2.250) {};
\node [] (14) at (-1.750,-3.000) {};
\node [] (15) at (-1.750,-3.750) {};
\node [] (16) at (-1.750,-4.250) {};
\node [shape=rectangle,draw,fill=white] (17) at (-2.250,-1.750) {};
\node [shape=rectangle,draw,fill=white] (18) at (-2.250,-2.250) {};
\node [shape=rectangle,draw,fill=white] (19) at (-2.250,-2.750) {};
\node [shape=rectangle,draw,fill=white] (20) at (-2.250,-3.250) {};
\node [shape=rectangle,draw,fill=white] (21) at (-2.250,-3.750) {};
\node [shape=rectangle,draw,fill=white] (22) at (-2.250,-4.250) {};
\node [style=bmonoid] (23) at (-2.750,-2.000) {};
\node [style=bmonoid] (24) at (-2.750,-3.000) {};
\node [style=bmonoid] (25) at (-2.750,-4.000) {};
\end{pgfonlayer}
\begin{pgfonlayer}{edgelayer}
\draw[] (0.center) to[out=-89.9, in=  0] (9.center);
\draw[] (1.center) to[out=-89.9, in=  0] (10.center);
\draw[] (2.center) to[out=-89.9, in=  0] (11.center);
\draw[] (9.center) to[out=180, in=  0] (12.center);
\draw[] (10.center) to[out=135, in=  0] (13.center);
\draw[] (11.center) to[out=180, in=  0] (15.center);
\draw[] (11.center) to[out=-127, in=  0] (16.center);
\draw[] (12.center) to[out=180, in=  0] (17.center);
\draw[] (13.center) to[out=180, in=  0] (18.center);
\draw[] (11.center) to[out=127, in=-44.9] (14.center) to[out=135, in=  0] (19.center);
\draw[] (10.center) to[out=-135, in= 45] (14.center) to[out=-135, in=  0] (20.center);
\draw[] (15.center) to[out=180, in=  0] (21.center);
\draw[] (16.center) to[out=180, in=  0] (22.center);
\draw[] (17.center) to[out=180, in=53.1] (23.center);
\draw[] (18.center) to[out=180, in=  0] (23.center);
\draw[] (19.center) to[out=180, in=-53] (23.center);
\draw[] (20.center) to[out=180, in= 45] (24.center);
\draw[] (21.center) to[out=180, in=-44.9] (24.center);
\draw[] (22.center) to[out=180, in=  0] (25.center);
\draw[] (23.center) to[out=180, in=-89.9] (3.center) to[out= 90, in=180] (6.center) to[out=  0, in= 90] (0.center);
\draw[] (24.center) to[out=180, in=-89.9] (4.center) to[out= 90, in=180] (7.center) to[out=  0, in= 90] (1.center);
\draw[] (25.center) to[out=180, in=-89.9] (5.center) to[out= 90, in=180] (8.center) to[out=  0, in= 90] (2.center);
\end{pgfonlayer}
\end{tikzpicture}

\caption{How a matrix is transformed into a diagram}
\label{fig:matrixdiag}
\end{figure}

Then, in this case, given a square matrix $M$ with nonnegative integer coefficients, one can associate to the matrix $M$ a morphism $\psi(M): 0 \to 0$ in this category $\mathfrak{C}$. The correspondance is given in Figure~\ref{fig:matrixdiag}, where black diamonds with $n$ outputs corresponds to a cascade of $n-1$ black diamonds with $2$ outputs (same for white diamonds):
\begin{itemize}  
\item Let $M=(a_{ij})_{1 \leq i,j  \leq n}$
\item The diagram has exactly $n$ black diamonds and $n$ white diamonds, numbered from $1$ to $n$
\item There are $a_{ij}$ wires from the $i$-th black diamond to the $j$-th black diamond, each of them support a white rectangle.
\item The output of the $i$-th white diamond is linked back to the input of the $i$-th black diamond.
\item The new diagram is a diagram with no inputs and outputs that we call $\psi(M)$. 
\end{itemize}  

This can be done purely categorically without resorting to diagrams:
As $\mathfrak{C}$ is traced, and contain generators that satisfy all the equations of $\matrices{\mathbb{Z}_+[t]}$, by the universal property of $\widehat{\matrices{\mathbb{Z}_+[t]}}$ there is a functor $F$ from $\widehat{\matrices{\mathbb{Z}_+[t]}}$ to $\mathfrak{C}$.
Then the $0 \to 0$ morphism we are looking at is just $\psi(M) = F(tr_n(\iota(tM)))$.

From Theorem~\ref{thm:zt} we obtain that if $M$ and $N$ are strong shift equivalent, then $\psi(M) = \psi(N)$.
In other words, we have developed a systematic way for obtaining invariants of strong shift equivalence.
As Theorem~\ref{thm:zt} is an equivalence, we also have the guarantee that our approach is complete in the sense that in one specific category (namely $\widehat{\matrices{\mathbb{Z}_+[t]}}$ itself), $M$ and $N$ are strong shift equivalent iff $\psi(M) = \psi(N)$.

Now, categories which contains (bicommutative) bialgebras do exist in the wild, and they are actually very well studied.
The purpose of this section is now clear: we will look at well known examples of bialgebras, and see to which invariant $\psi$ they correspond.

There are however two caveats: most of the known bialgebras are actually Hopf algebras. The additional structure of Hopf algebras intuitively gives the existence of a ``negative wire'' (the antipode) that can cancel a wire. This is bad in our case, as, when we interpret matrices in these categories, one would obtain invariants for strong shift equivalence in $\mathbb{Z}$, i.e. a weaker and more well known notion of equivalence. The second caveat is that most examples of bialgebras are actually not in traced categories.

It turns out that in many examples, one can transform the bialgebra to solve the second problem, with the added benefit that the bialgebra will not be a Hopf algebra.

We will give three examples. The first two examples are possibly the most well known examples of bialgebras. We will explain how these bialgebras give rise to some well known invariants of symbolic dynamics. The third one is not that well known, and was actually conceived with foresight.

\subsection{Monoids}
\label{sec:monoids}
The first example starts from one of the easiest possible bialgebra.

Let $(\mathcal{M}, \times)$ be a commutative monoid, with unit element $1$. Consider the prop where arrows $n \to m$ corresponds to arbitrary functions from $\mathcal{M}^n \to \mathcal{M}^m$, and consider inside this category the following 
functions:

\begin{tikzpicture}[baseline=(current bounding box.center)]
  \node (A) at (0,0) {$x$};
  \node (B) at (0,1) {$y$};
  \node (C) [style=bmonoid] at (1,0.5) {};
  \node (D)  at (2,0.5) {$x\times y$};
  \draw (A) -- (C);
  \draw (B) -- (C);
  \draw (C) -- (D);
\end{tikzpicture}
\hfill
\begin{tikzpicture}[baseline=(current bounding box.center)]
  \node (A) [style=bmonoid] at (0,0.5) {};
  \node (B)  at (1,0.5) {$1$};
  \draw (A) -- (B);
\end{tikzpicture}  
\hfill
\begin{tikzpicture}[baseline=(current bounding box.center)]
  \node (A) at (0,0) {$x$};
  \node (B) at (0,1) {$x$};
  \node (C) [style=monoid] at (-1,0.5) {};
  \node (D)  at (-2,0.5) {$x$};
  \draw (A) -- (C);
  \draw (B) -- (C);
  \draw (C) -- (D);
\end{tikzpicture}
\hfill
\begin{tikzpicture}[baseline=(current bounding box.center)]
  \node (A) [style=monoid] at (0,0.5) {};
  \node (B)  at (-1,0.5) {$x$};
  \draw (A) -- (B);
\end{tikzpicture}

It is easy to see that these functions satisfy the defining equations of \ref{def:matricesintegers}, and in particular the fundamental bigebra equation:

{\begin{tikzpicture}
\begin{pgfonlayer}{nodelayer}
\node [] (0) at (-0.000,-0.500) {};
\node [] (1) at (-0.000,-1.500) {};
\node [style=bmonoid] (2) at (-0.500,-1.000) {};
\node [style=monoid] (3) at (-1.000,-1.000) {};
\node [] (4) at (-1.500,-0.500) {};
\node [] (5) at (-1.500,-1.500) {};
\node [label={[label position=center]=}] (6) at (-1.750,-1.000) {};
\node [] (7) at (-2.000,-0.500) {};
\node [style=monoid] (8) at (-2.500,-0.500) {};
\node [] (9) at (-2.000,-1.500) {};
\node [style=monoid] (10) at (-2.500,-1.500) {};
\node [] (11) at (-3.000,-0.250) {};
\node [] (12) at (-3.000,-1.000) {};
\node [] (13) at (-3.000,-1.750) {};
\node [style=bmonoid] (14) at (-3.500,-0.500) {};
\node [] (15) at (-4.000,-0.500) {};
\node [style=bmonoid] (16) at (-3.500,-1.500) {};
\node [] (17) at (-4.000,-1.500) {};
\end{pgfonlayer}
\begin{pgfonlayer}{edgelayer}
\draw[] (0.center) to[out=180, in=63.4] (2.center);
\draw[] (1.center) to[out=180, in=-63.3] (2.center);
\draw[] (2.center) to[out=180, in=  0] (3.center);
\draw[] (3.center) to[out=117, in=  0] (4.center);
\draw[] (3.center) to[out=-116, in=  0] (5.center);
\draw[] (7.center) to[out=180, in=  0] (8.center);
\draw[] (9.center) to[out=180, in=  0] (10.center);
\draw[] (8.center) to[out=135, in=  0] (11.center);
\draw[] (10.center) to[out=-135, in=  0] (13.center);
\draw[] (11.center) to[out=180, in= 45] (14.center);
\draw[] (10.center) to[out=135, in=-44.9] (12.center) to[out=135, in=-44.9] (14.center);
\draw[] (14.center) to[out=180, in=  0] (15.center);
\draw[] (8.center) to[out=-135, in= 45] (12.center) to[out=-135, in= 45] (16.center);
\draw[] (13.center) to[out=180, in=-44.9] (16.center);
\draw[] (16.center) to[out=180, in=  0] (17.center);
\end{pgfonlayer}
\end{tikzpicture}
 }

However, there is no easy notion of a trace (of a feedback loop, of fixed points) in this category. One can solve the problem in two different ways. The first way is to consider the prop of multivalued partial functions (i.e. relations) instead of functions. This would work in theory but would not be very interesting: Remember that we will trace all inputs/ouputs of the diagrams, so we are only interested in morphisms $0 \to 0$ of your category. This particular category has only 2 such morphisms, so this would partition the whole set of matrices into only two sets, which does not give a good invariant.

The ``good'' way is to go to a weighted setting.
In this setting, one has to consider inputs of size $n$ to be elements of  $\mathcal{M}^n$ with multiplicities \cite{Eilenberg}.
In other words, let $S_n$ be the set of functions from $\mathcal{M}^n$ to $\mathbb{N}_\infty$, the set of nonnegative (possibly infinite) integers.
For a function $f$ in $S_k$ and $x \in \mathcal{M}^k$, $f(x)$  has to be interpreted as the multiplicity of $x$.
By identifying $x \in \mathcal{M}^n$ with the function that is zero everywhere except on $x$, we get an embedding of $\mathcal{M}^n$ into $S_n$.

Then the arrows $n \to m$ of our category correspond to linear (in $\mathbb{N}_\infty$) functions from $S_n$ to $S_m$.

Now this category is traced: Let $f$ be a function from $S_n$ to $S_m$. $Tr(f)$  is obtained in the following way.
Let  $y$ be an input of $Tr(f)$, and $z \in \mathcal{M}^m$. The coefficient of $z$ in the output of $Tr(f)$ is obtained by taking all possibles values of $x \in \mathcal{M}$, computing $f(x,y)$, looking at the coefficient of $(x,z)$ in $f(x,y)$, and adding up all these coefficients, possibly obtaining  $\infty$ as a result.

\begin{example}
Suppose that our monoid $\mathcal{M}$ is the monoid  $\mathbb{R}$ with multiplication and consider the following diagram:
  
{\begin{tikzpicture}[baseline=(current bounding box.center)]
\begin{pgfonlayer}{nodelayer}
\node [] (0) at (-0.000,-0.500) {};
\node [] (1) at (-0.750,-0.375) {};
\node [] (2) at (-2.750,-0.375) {};
\path[dashed=None,draw=,color=black] (-1.250,-0.500) rectangle (-2.250,-1.000);
\node at (-1.750,-0.750) {};
\node [] (3) at (-1.750,-0.250) {};
\node [style=bmonoid] (4) at (-1.500,-0.750) {};
\node [style=monoid] (5) at (-2.000,-0.750) {};
\node [] (6) at (-3.500,-0.500) {};
\end{pgfonlayer}
\begin{pgfonlayer}{edgelayer}
\draw[] (1.center) to[out=-89.9, in=26.6] (4.center);
\draw[] (0.center) to[out=180, in=-26.5] (4.center);
\draw[] (4.center) to[out=180, in=  0] (5.center);
\draw[] (5.center) to[out=153, in=-89.9] (2.center) to[out= 90, in=180] (3.center) to[out=  0, in= 90] (1.center);
\draw[] (5.center) to[out=-153, in=  0] (6.center);
\end{pgfonlayer}
\end{tikzpicture}
 }

To better understand the diagram, we will look at inputs in $\mathbb{R}^2$, but inputs are really in $\mathbb{R}^2 \to {\mathbb{N}_\infty}$ but they can be extended by linearity.

Without the trace part, we are looking at the function $(x,y) \mapsto (x \times y, x\times y)$
Using the trace, we force the first output ($x \times y$) to  be actually equal to $x$.
Now, for $y \in \mathcal{M}$ there are two cases:

\begin{itemize}
\item If $y = 1$, then $x \times y$ is actually equal to $x$, and we actually obtain an output on the other wire of value $x$, and this for all $x$.
As a consequence, the traced function, on input $1$, gives as input all possible values in $\mathcal{M}$, each with coefficient $1$. 
\item If $y \not= 1$, then $x \times y = x$ only if $x = 0$. As a consequence, the traced function, on input $y \not= 1$, gives as output the value $0$, with coefficient $1$  
\end{itemize}  
\end{example}

\begin{example}
Suppose now that our $\mathcal{M}$ is the monoid  $\mathbb{R}^{> 0}$ with multiplication (or equivalently $\mathbb{R}$ with addition)

The only difference in this case is that on inputs $y \not =1$ the function does not give an output (technically the output is the function identically equal to $0$)
\end{example}

\begin{example}
  Suppose now that our $\mathcal{M}$ is the monoid  of subsets of $\{0,1\}$ with union.
  \begin{itemize}
  \item On input $y = \emptyset$, the result is all 4 values $\emptyset, \{0\}, \{1\}, \{0,1\}$, each with coefficient $1$
  \item On input $y = \{1\}$, the result is $\{1\}$ with coefficient $1$ and $\{0,1\}$ with coefficient $1$
  \item On input $y = \{0,1\}$, the result is $\{0,1\}$ with coefficient $1$    
\end{itemize}    
\end{example}

Now, we use our protocol, and start from a matrix $M$, see it as a diagram in this category using Figure\ref{fig:matrixdiag}. What is the result we obtain ?
It is easy to see that we obtain exactly the number of solutions of the equation $Mx = x$.

\begin{proposition}
  Let $\mathcal{M}$ be a  commutative  monoid written additively.
  The number of solutions to the equation  $Mx = x$ is an invariant of flow equivalence. More precisely, if $M$ and $N$ are two matrices that are flow equivalent, then the equation $Mx = x$ has the same number of solutions as the equation $Nx = x$.    
\end{proposition}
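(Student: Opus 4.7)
The plan is to instantiate the general procedure of the section with the traced prop $\mathfrak{C}$ of $\mathbb{N}_\infty$-weighted linear maps associated to $\mathcal{M}$ constructed above, and then to compute $\psi(M)$ directly. First one checks that the chosen interpretations of $\mu,\eta,\Delta,\epsilon$ in $\mathfrak{C}$ satisfy all the equations of Definition~\ref{def:matricesintegers}: the monoid axioms for $(\mu,\eta)$ reduce to associativity, commutativity and unitality of $+$, the comonoid axioms for $(\Delta,\epsilon)$ come from the fact that copy/discard always forms a cocommutative counital comonoid, and the bialgebra laws reduce to componentwise identities such as $(x+y,x+y)=(x,x)+(y,y)$ in $\mathcal{M}^2$. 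That $\mathfrak{C}$ is traced was already explained in Section~\ref{sec:monoids}. By the universal property of $\widehat{\matrices{\mathbb{Z}_+}}$ this yields a traced prop functor $F:\widehat{\matrices{\mathbb{Z}_+}}\to\mathfrak{C}$, and one sets $\psi(M):=F(\widetilde{M})$.

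Next I would compute $\psi(M)$ explicitly. Forgetting the $n$ feedback loops, the diagram associated to $M$ (as in Figure~\ref{fig:matrixdiag}, where the rectangles play no role since $M$ has integer coefficients) computes the $\mathbb{N}_\infty$-linear map $S_n\to S_n$ whose value on the basis element $\delta_x$, for $x=(x_1,\ldots,x_n)\in\mathcal{M}^n$, is $\delta_{Mx}$: the $j$-th black-diamond cascade copies $x_j$ along the $\sum_i a_{ij}$ wires leaving it, and the $i$-th white-diamond cascade monoidally sums the $\sum_j a_{ij}$ wires reaching it, producing exactly $(Mx)_i=\sum_j a_{ij}\,x_j$. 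Applying the trace $n$ times, using the formula of Section~\ref{sec:monoids}, the resulting $0\to 0$ morphism takes in $S_0=\mathbb{N}_\infty$ the value
\[
\psi(M)\;=\;\sum_{x\in\mathcal{M}^n}[Mx=x]\;=\;|\{x\in\mathcal{M}^n : Mx=x\}|.
\]

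The conclusion is then immediate from Theorem~\ref{thm:z}: if $M$ and $N$ are flow equivalent, then $\widetilde{M}=\widetilde{N}$ in $\widehat{\matrices{\mathbb{Z}_+}}$, hence $\psi(M)=F(\widetilde{M})=F(\widetilde{N})=\psi(N)$, i.e. the two fixed-point equations have the same number of solutions. The main step requiring care is the middle one, keeping track of multiplicities through the bialgebra diagram: one must use that $a_{ij}$ parallel wires all carrying $x_j$ are combined by the $i$-th white diamond into the element $a_{ij}\cdot x_j$ of $\mathcal{M}$, so that the trace loops collectively enforce the system $Mx=x$; the remaining bookkeeping is formal.
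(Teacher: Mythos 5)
Your proposal is correct and follows the same route as the paper: interpret the matrix diagram in the traced prop of $\mathbb{N}_\infty$-weighted linear maps on $\mathcal{M}$, observe that the fully traced morphism computes the number of fixed points of $x\mapsto Mx$, and invoke Theorem~\ref{thm:z} together with the induced traced prop functor. The paper leaves the middle computation as "easy to see"; you have simply filled in the multiplicity bookkeeping explicitly, which is a faithful elaboration rather than a different argument.
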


For strong shift equivalence, we also have to explain what the square will stand for. It is easy to see that we need it to be an homomorphism of $\mathcal{M}$, and we get immediately the following:
\begin{proposition}
  Let $\mathcal{M}$ be a commutative  monoid written additively and $h$ be an homomorphism
  The number of solutions to the equation  $h(Mx) = x$ is an invariant of strong shift equivalence.
  More precisely, if $M$ and $N$ are two matrices that are strong shift equivalent, then the equation $h(Mx) = x$ has the same number of solutions as the equation $h(Nx) = x$.    
\end{proposition}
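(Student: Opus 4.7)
The plan is to apply the universal property of the traced bialgebra prop $\widehat{\matrices{\mathbb{Z}_+[t]}}$ to the traced category $\mathfrak{C}$ of weighted multivalued linear functions from Section~\ref{sec:monoids}, interpreting the extra generator $h$ of Proposition~\ref{def:matricespolynomial} as the monoid homomorphism $h : \mathcal{M} \to \mathcal{M}$ acting on a single wire. Once this interpretation is shown to satisfy all equations of the extended presentation, Theorem~\ref{thm:zt} (more precisely its corollary for strong shift equivalence) will deliver the required invariance.

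First I would verify that $h$ satisfies the four equations added in Proposition~\ref{def:matricespolynomial}. Two of them, $h \circ \mu = \mu \circ (h \otimes h)$ and $h \circ \eta = \eta$, translate to $h(x+y) = h(x)+h(y)$ and $h(0)=0$, which are precisely the defining properties of a monoid homomorphism; the other two, $(h\otimes h) \circ \Delta = \Delta \circ h$ and $\epsilon \circ h = \epsilon$, are automatic because copying and discarding act on a single variable independently of what was applied to it. The bialgebra equations of Definition~\ref{def:matricesintegers} have already been checked for $\mathfrak{C}$ in Section~\ref{sec:monoids}, and the trace on $\mathfrak{C}$ is in place. Hence, by the universal property of $\widehat{\matrices{\mathbb{Z}_+[t]}}$, there is a traced prop functor $F : \widehat{\matrices{\mathbb{Z}_+[t]}} \to \mathfrak{C}$; and if $M$ and $N$ are strong shift equivalent, then by the corollary to Theorem~\ref{thm:zt} we have $[tM,n] = [tN,m]$ in $\widehat{\matrices{\mathbb{Z}_+[t]}}$, so applying $F$ yields $F[tM,n] = F[tN,m]$ in $\mathfrak{C}$.

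It then remains to identify the scalar $F[tM,n]$ with the number of solutions of $h(Mx) = x$. Since this is a $0 \to 0$ morphism of $\mathfrak{C}$, i.e.\ a linear endomorphism of $S_0 = \mathbb{N}_\infty$, it is determined by its value on $1 \in S_0$ and so corresponds to an element of $\mathbb{N}_\infty$. To compute it, I would unwind the diagram of Figure~\ref{fig:matrixdiag} for $tM$ in $\mathfrak{C}$ and apply the sum-over-intermediate-values prescription for the trace described in Section~\ref{sec:monoids}: the copy diamond at position $i$ carries a value $x_i \in \mathcal{M}$ on each outgoing wire, each rectangle turns it into $h(x_i)$, and the sum diamond at position $j$ collects $\sum_i a_{ji} h(x_i) = (M h(x))_j = h\bigl((Mx)_j\bigr)$ using that $h$ is a homomorphism. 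The trace loops then force $h(Mx)_j = x_j$ for every $j$, so $F[tM,n]$ is precisely the cardinality (with multiplicities in $\mathbb{N}_\infty$) of $\{x \in \mathcal{M}^n : h(Mx) = x\}$. The main obstacle is this final unwinding: one must carefully track the matrix-to-diagram indexing convention and verify that, in the weighted category, tracing really does produce a sum ranging over tuples of wire values forced to be fixed by the iteration, so that the scalar read off is exactly the solution count.
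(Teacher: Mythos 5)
Your proposal is correct and follows essentially the same route as the paper, which obtains this proposition as an immediate instance of the general protocol of Section~3 (universal property of $\widehat{\matrices{\mathbb{Z}_+[t]}}$ applied to the weighted category of Section~\ref{sec:monoids}, with the square interpreted as the homomorphism $h$). The details you supply --- checking the four equations for $h$ and unwinding the traced diagram to read off the solution count of $h(Mx)=x$ --- are exactly the steps the paper declares ``easy to see'' and ``immediate''.
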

Of course the fact that this is an invariant is quite immediate: the important thing is that we obtained it entirely algebraically.

By taking $\mathcal{M} = \mathbb{C}$ and $h$ the multiplication by
$\lambda$, we get that if $M$ and $N$ are strong shift equivalent,
then $1/\lambda$ is an eigenvalue of $M$ iff $1/\lambda$ is an
eigenvalue of $N$, i.e. $M$ and $N$ have the same eigenvalues, except
possibly the value $0$. This is a well known invariant, called the
``spectrum away from zero''.

By working in finite fields (so that we can count something: with $\mathcal{M} = \mathbb{C}$ the number of solutions is either 0 or $\infty$), we can prove easily that the eigenvalues should also have the same multiplicities.

\subsection{Algebras and tensors}

In general algebra \cite{Underwood,Sweedler}, a bialgebra in some vector space $V$ over some field $\mathbb{K}$ is something that satifies all axioms of Definition~\ref{def:matricesintegers} when arrows from $n \to m$ are linear maps from $V^{\otimes n}$ to $V^{\otimes m}$.

The most well known bialgebras are the monoid algebra, and the binomial algebra, as any good book about bialgebras and Hopf algebras will prove\footnote{These are the first examples in \cite{Underwood} and the first commutative examples in \cite{Sweedler}. In \cite{Sweedler} the binomial algebra is seen as a bialgebra over the symmetric algebra.}.

In the case of the   monoid algebra  $\mathbb{K}[M]$ of a monoid $M$,  $V$ is the vector space of basis $M$, the multiplication is the multiplication on $M$ extended linearly, and the comultiplication map $\Delta: V \mapsto V^{\otimes 2}$ is the copy map $\Delta(x) = x \otimes x$.

In the case of the binomial algebra, we  take  $V = \mathbb{R}[X]$, the polynomials with real coefficients in one variable.
There is a obvious algebra structure on $V$, the multiplication:
\begin{itemize}
\item $\mu( P \otimes Q) = PQ $
\item $\eta = 1$
\end{itemize}  
And there is a compatible comultiplication, which gives a bialgebra
\begin{itemize}
\item $\Delta(X) = 1 \otimes X + X\otimes 1$. More generally $\Delta(X^n) = \sum_k {n \choose k} X^k \otimes X^{n-k} $
\item $\epsilon(X^n) = \delta_{0,n}$
\end{itemize}  

We can try to  exploit these well known (bicommutative) bialgebras in our case.
There is however a fundamental problem: When $V$ is infinite, there is in general no notion of a trace.

The first example for a  finite, commutative $M$, essentially amounts to the work that has been done in the previous section.

To fit the second example in our framework, we need to put it somehow in a category where a trace is defined.
If we think of these examples as matrices with coefficients in $\mathbb{K}$, then the problem of the trace is that we need to do an infinite sum, which is usually impossible.
However, if we think of matrices with coefficients in a \emph{complete } semiring $\mathcal{R}$ instead of a field $\mathbb{K}$, then everything works.

Let $\mathcal{R}$ be a commutative complete semiring. This is a semiring where all sums, even possibly infinite, are defined\footnote{We will actually need only countable sums for the applications, so $\omega$-complete semirings are sufficient}. Let $I$ be a set.

Then we can look at the prop where the morphisms $n \times m$ are matrices with coefficients in $\mathcal{R}$ of size $I^n$ by $I^m$ with the usual product and tensor product. More exactly:
\begin{itemize}
\item Arrows $n \times m$  are functions from $I^n \times I^m$ to $\mathcal{R}$
\item The composition $fg$ of $f: n \times m$ and $g: m \times p$ is defined by $(fg)(x,y) = \sum_{z \in I^m} f(x,z) g(z,y)$. This is well defined as $\mathcal{R}$ is complete.
\item The tensor product $f\otimes g$ of $f$ and $g$ is defined by $(f \otimes g)((a,b), (c,d)) = f(a,c) g(b,d)$
\item The trace of $f$ is $(tr f)(y,z) = \sum_x f((x,y),(x,z))$  
\end{itemize}
If we think in terms of matrices, the full trace (i.e. where all outputs are branched into inputs) of a matrix is exactly what we usually call its trace, i.e. the sum of all diagonal elements (which is always well defined).

The reader should realize that the previous example of section~\ref{sec:monoids} corresponds to matrices with coefficients in the  complete semiring $\mathbb{N}_{\infty}$.

Now we can try to find the analogue of the binomial bialgebra. In this bialgebra, we were interested in $V = \mathbb{R}[X]$. We will now use the complete semigroup  $\mathbb{R}_{\infty}$, the semigroup which contains \emph{nonnegative} numbers and $\infty$, instead of $\mathbb{R}$. The fact we restrict ourself to nonnegative numbers should not be a problem as the equations that define the multiplication and the comultiplication do not involve any negative number\footnote{However, the binomial bialgebra has the additional property of a Hopf algebra that we will \emph{lose}, as this property involves negative coefficients. This is actually a good thing as we do not want an Hopf algebra.}. As general sums are now authorized, we will not obtain the polynomials in  $\mathbb{R}_\infty$, but \emph{formal series} in $\mathbb{R}_\infty$, that is $\mathbb{R}_\infty [[X]]$.

Let's take time to define things correctly :

\begin{itemize}
\item We define $V^{\otimes n} = \mathbb{R}_\infty [[ X_1 \dots X_n ]]$, formal power series in $n$ variables. Elements of $V^{\otimes n}$ will be written $P(X_1 \dots X_n)$
\item Arrows from $n$ to $m$ are linear maps from $V^{\otimes n}$ to $V^{\otimes m}$
\item For two maps $f,g$  the composition $f\circ g$ is the usual composition of power series.
\item If $P(X_1\dots X_n) \in V^{\otimes n}$ and $Q(X_1\dots X_m) \in V^{\otimes m}$, then the tensor product of $P$ and $Q$ is $(P \otimes Q)(X_1 \dots X_{n+m}) = P(X_1 \dots X_n) Q(X_{n+1} \dots X_{n+m})$. We will identify $X$ with $X_1$, so $X^n \otimes X^p$ is a notation for $X_1^n X_2^p$. 
\end{itemize}

We can now look at the previous example in this setting. First, one can see easily that it is still a bialgebra in this setting.
We will look at invariants for strong shift equivalence, so we need to interpret the square symbols in the diagrams. We will take the function $X^n \mapsto (\lambda X)^n$ for some $\lambda \in \mathbb{R}_\infty$ which is easily seen to be indeed a homomorphism for both the multiplication and comultiplication.

Before explaining which invariant we obtain, we will do a computation in one example.

Let's start with the matrix $\begin{pmatrix} 1 & 1\\1 & 0
\end{pmatrix}$
We therefore want to know to which real number  corresponds the diagram:

{\begin{tikzpicture}[baseline=(current bounding box.center)]
\begin{pgfonlayer}{nodelayer}
\node [] (0) at (-0.500,-0.875) {};
\node [] (1) at (-0.500,-1.500) {};
\node [] (2) at (-3.500,-0.875) {};
\node [] (3) at (-3.500,-1.500) {};
\path[dashed=None,draw=,color=black] (-1.000,-1.000) rectangle (-3.000,-2.500);
\node at (-2.000,-1.750) {};
\node [] (4) at (-2.000,-0.250) {};
\node [] (5) at (-2.000,-0.750) {};
\node [style=monoid] (6) at (-1.250,-1.375) {};
\node [] (7) at (-1.250,-2.125) {};
\node [shape=rectangle,draw,fill=white] (8) at (-1.750,-1.250) {};
\node [shape=rectangle,draw,fill=white] (9) at (-1.750,-1.750) {};
\node [shape=rectangle,draw,fill=white] (10) at (-1.750,-2.250) {};
\node [] (11) at (-2.250,-1.250) {};
\node [] (12) at (-2.250,-2.000) {};
\node [style=bmonoid] (13) at (-2.750,-1.375) {};
\node [] (14) at (-2.750,-2.125) {};
\end{pgfonlayer}
\begin{pgfonlayer}{edgelayer}
\draw[] (0.center) to[out=-89.9, in=  0] (6.center);
\draw[] (1.center) to[out=-89.9, in=  0] (7.center);
\draw[] (6.center) to[out=143, in=  0] (8.center);
\draw[] (6.center) to[out=-143, in=  0] (9.center);
\draw[] (7.center) to[out=180, in=  0] (10.center);
\draw[] (8.center) to[out=180, in=  0] (11.center);
\draw[] (11.center) to[out=180, in=36.9] (13.center);
\draw[] (10.center) to[out=180, in=-44.9] (12.center) to[out=135, in=-36.8] (13.center);
\draw[] (9.center) to[out=180, in= 45] (12.center) to[out=-135, in=  0] (14.center);
\draw[] (13.center) to[out=180, in=-89.9] (2.center) to[out= 90, in=180] (4.center) to[out=  0, in= 90] (0.center);
\draw[] (14.center) to[out=180, in=-89.9] (3.center) to[out= 90, in=180] (5.center) to[out=  0, in= 90] (1.center);
\end{pgfonlayer}
\end{tikzpicture}
 }

Let's start with the nontraced version\footnote{In this particular example, one can use the equations of the traced prop to remove one trace, obtaining a simpler diagram. This is left as an exercise.}, and an input of the form $X^n \otimes X^m$.

After applying the black diamond, we obtain $\sum_k {n\choose k} X^k \otimes X^{n-k} \otimes  X^m$.
After applying the symmetry, we obtain $\sum_k {n\choose k} X^k \otimes X^m \otimes X^{n-k}$.
After applying the squares, we obtain  $\sum_k {n\choose k} (\lambda X)^k \otimes (\lambda X)^m \otimes (\lambda X)^{n-k}$.
Finally, after applying the white diamond, we obtain
$\sum_k {n\choose k} (\lambda X)^{m+k} \otimes (\lambda X)^{n-k}  = \lambda^{n+m} \sum_k {n\choose k} X^{m+k} \otimes X^{n-k} $.

Now, we are interested in the traced version of this diagram, and therefore, in the diagonal of the map.
Now the coefficient  of $X^n \otimes X^m$ is precisely obtained when $m+k = n$, and is therefore equal to $ {n \choose n-m} \lambda^{n+m}$

The trace of the diagram is therefore, for small values of $\lambda$:

\[ \sum_{n,m} {n \choose n-m} \lambda^{n+m} = \sum_{n,k} {n \choose k} \lambda^{2n-k}  = \frac{1}{1 - \lambda^2 - \lambda} \]

The reader familiar with invariants with symbolic dynamics has recognized this expression and know what our next theorem will be:

\begin{theorem}
  Let $M$ be a nonnegative matrix. If we interpret $M$ in the previous bialgebra, with $X^n \mapsto (\lambda X)^n$ as the homomorphism, the result we obtain is exactly $\zeta_M(\lambda)$ the value of the Zeta function\cite{BowenLanford} of the shift computed at $t = \lambda$, with $\zeta_M(t) = \frac{1}{\det(I-tM)}$.
  
  In particular the Zeta function is an invariant of strong shift equivalence.
\end{theorem}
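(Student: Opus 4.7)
The plan is to compute $\psi(M)$ directly from the diagrammatic description of Figure~\ref{fig:matrixdiag} and identify the resulting formal series in $\lambda$ as an instance of MacMahon's master theorem. First I would fix formal inputs $X^{n_1},\ldots,X^{n_N}$ at the $N$ wires being traced and propagate them through the diagram: iterating $\Delta(X^n)=\sum_k\binom{n}{k}X^k\otimes X^{n-k}$, the $i$-th black diamond (of out-degree $d_i=\sum_j m_{ij}$) produces the multinomial expansion of $X^{n_i}$ along a weak composition indexed by $(j,\ell)$ with $1\le \ell\le m_{ij}$; each square contributes $X^k\mapsto \lambda^k X^k$, so all the squares on a given row $i$ collectively produce a factor $\lambda^{n_i}$; and the $j$-th white diamond multiplies its inputs together, producing $X^{K_j}$ where $K_j:=\sum_{i,\ell} k^{(i)}_{j,\ell}$.

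Taking the trace forces $K_j=n_j$ for every $j$. Setting $K_{ij}:=\sum_\ell k^{(i)}_{j,\ell}$ and collapsing the inner compositions by the standard identity $\sum_{k_1+\cdots+k_m=K}\binom{K}{k_1,\ldots,k_m}=m^K$, I would obtain
$$\psi(M)\;=\;\sum_{K}\lambda^{|K|}\prod_i\binom{\sum_j K_{ij}}{K_{i1},\ldots,K_{iN}}\prod_{i,j} m_{ij}^{K_{ij}},$$
the sum being over non-negative integer matrices $K=(K_{ij})$ whose $i$-th row sum equals its $i$-th column sum for every $i$ (the row-sum condition comes from the black diamonds, the column-sum condition from the trace).

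The final step is to recognize this as a direct instance of MacMahon's master theorem. The classical statement gives, for any $N\times N$ matrix $A=(a_{ij})$,
$$\frac{1}{\det(I-A)} \;=\; \sum_K \prod_i\binom{\sum_j K_{ij}}{K_{i1},\ldots,K_{iN}}\prod_{i,j} a_{ij}^{K_{ij}},$$
summed over matrices $K$ satisfying exactly the same row-sum/column-sum constraint. Applying this with $A=\lambda M$ absorbs the factor $\lambda^{|K|}$ into the $a_{ij}^{K_{ij}}$ terms and yields $\psi(M)=1/\det(I-\lambda M)=\zeta_M(\lambda)$. The fact that $\zeta_M(\lambda)$ is an invariant of strong shift equivalence is then immediate from Theorem~\ref{thm:zt}, since $\psi=F\circ\widetilde{(t\,\cdot\,)}$ for the unique traced prop functor $F:\widehat{\matrices{\mathbb{Z}_+[t]}}\to\mathfrak{C}$.

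The main obstacle is the combinatorial bookkeeping in the first two steps: the wire multiplicities introduce a triple index $(i,j,\ell)$ that has to be correctly aggregated into the doubly-indexed matrix $K=(K_{ij})$ matching MacMahon's formula, and one must verify that the row-sum and column-sum constraints arising (respectively) from the comultiplications and from the trace coincide with those in MacMahon's statement. Once the combinatorial expression is correctly extracted, the identification with $1/\det(I-\lambda M)$ is purely a matter of quotation.
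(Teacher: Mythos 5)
Your proposal is correct and follows essentially the same route as the paper, which (after the worked $2\times 2$ example) simply declares the theorem ``a direct consequence of MacMahon's Master Theorem''; you have filled in the combinatorial bookkeeping — the factorization of the multinomial coefficients over the parallel wires, the identity $\sum_{k_1+\cdots+k_m=K}\binom{K}{k_1,\ldots,k_m}=m^K$, and the row-sum/column-sum constraints from the comultiplications and the trace — that the paper leaves implicit. The resulting sum over nonnegative integer matrices $K$ matches MacMahon's formula for $1/\det(I-\lambda M)$ exactly as you state.
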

This theorem is a direct consequence of MacMahon's Master Theorem, see \cite{MacMahon}.

\subsection{Groups and Cospans}

In our first example, we proved that, for all monoid $\mathcal{M}$, the number of solutions in $\mathcal{M}$ of the equation  $Mx = x$ is an invariant of flow equivalence.

One can do a bit better, and actually associate a commutative monoid itself to a matrix $M$. We will explain how this can be explained categorically.
Technically, what we will be looking at is a subcategory of $CoSpan(\mathbf{AbGrp})$ where $\mathbf{AbGrp}$ is the category of abelian groups. However we will explain which prop we will look at in layman terms.

We assume familiarity with classical notations for finitely presented groups. By $\left< X \middle| R \right>_{ab}$, we denote the \emph{commutative} group with generators $X$ and presentation $R$. It is important to note that all our groups are commutative. One can relax this condition at some point (even to obtain a bicommutative bialgebra at the end), but this will be easier to 
understand.

Morphisms $n \to m$ in our categories are triples $(G,n,m)$ where $G$ is a group with at least $n+m$ generators, $G = \left< x_1 \dots x_n, y_1 \dots y_m, z_1 \dots z_p \middle| R \right>_{ab}$. $x_1 \dots x_n$ and $y_1 \dots y_m$ are to be understood as ``input'' and ``output'' generators respectively, and $z_1 \dots z_p$ as ``inside'' generators of the group.
We will usually write the morphism $(G,n,m) = \left< x_1 \dots x_n \middle| y_1 \dots y_m\middle| z_1 \dots z_p \middle| R \right>_{ab}$  to further separate the three kind of generators.

We will consider two such groups to be equal if there is an isomorphism that can only change the ``inner'' part of the group, i.e. 
$(G,n,m) = \left< x_1 \dots x_n\middle| y_1 \dots y_m\middle| z_1 \dots z_p \middle| R \right>_{ab}$ and $(G',n,m) = \left< x_1 \dots x_n\middle| y_1 \dots y_m\middle| z_1 \dots z_q \middle| R' \right>_{ab}$ are equal if there is an isomorphism $\phi$ from $G$ to $G'$  s.t.  $\phi(x_i) = x_i, \phi(y_j) = y_j$.

There are two way to compose these morphisms:

\begin{itemize}
\item The composition of the morphism $(G,n,m) = \left< x_1 \dots x_n\middle| y_1 \dots y_m\middle| z_1 \dots z_p \middle| R \right>_{ab}$ and the morphism  $(G',m,k) = \left< r_1 \dots r_m\middle| s_1 \dots s_k\middle| t_1 \dots t_q \middle| R' \right>_{ab}$ consists in putting the two groups together and identifying the outputs of the first one with the inputs of the second one. It is therefore the group:
  $$(H,n,k) = \left< x_1 \dots x_n\middle| s_1 \dots s_k\middle| r_1 \dots r_m, y_1 \dots y_m, z_1 \dots z_p, t_1 \dots t_q \middle| R , R', r_i = y_i \right>_{ab}$$
\item The tensor product of the morphism $(G^1,n^1,m^1) = \left< x^1_1 \dots x^1_{n^1}\middle| y^1_1 \dots y^1_{m^1}\middle| z^1_1 \dots z^1_{p^1} \middle| R^1 \right>_{ab}$ and 
the morphism  $(G^2,n^2,m^2) = \left< x^2_1 \dots x^2_{n^2}\middle| y^2_1 \dots y^2_{m^2}\middle| z^2_1 \dots z^2_{p^2} \middle| R^2 \right>_{ab}$ consists in putting them side by side, i.e. it is the group:
  $$(H, n^1 + n^2, m^1 + m ^2) = \left< x^1_1 \dots x^1_{n^1}, x^2_1 \dots x^2_{n^2}\middle| y^1_1 \dots y^1_{m^1},y^2_1 \dots y^2_{m^2}\middle| z^1_1 \dots z^1_{p^1},z^2_1 \dots z^2_{p^2} \middle| R^1, R^2 \right>_{ab}$$

  As our groups are abelian, $H$ is just the direct sum of $G^1$ and $G^2$ (or cartesian product if the reader prefer, but categorically it is clearly more like a direct sum).
\end{itemize}

This category is traced (it is even compact closed): the trace of the morphism $(G,n,m) = \left< x_1 \dots x_n\middle| y_1 \dots y_m\middle| z_1 \dots z_p \middle| R \right>_{ab}$ consists in equating the first input and the first output and internalizing them, to obtain $(H, n-1, m-1) = \left< x_2 \dots x_n\middle| y_2 \dots y_m\middle| x_1, y_1, z_1 \dots z_p \middle| R, x_1 = y_1 \right>_{ab}$.

The $0$, identity, and symmetry morphisms in this prop have the following definitions (the definitions are kind of obvious as we choose as relations exactly what we want to happen)

\begin{itemize}
  \item $0 = \left<\middle|\middle|\middle|\right>_{ab}$ (the trivial group with no generators)
  \item $id = \left<x\middle|y\middle|\middle|x = y\right>_{ab}$ (this is the group $\mathbb{Z}$ presented with two generators)
  \item $\sigma = \left<x_1,x_2\middle|y_1,y_2\middle|\middle| x_1 = y_2, x_2 = y_1\right>_{ab}$
\end{itemize}

Now we need to find a bialgebra in this category. Again, the natural definition is quite obvious:
\begin{itemize}
\item $\mu = \left<x_1, x_2\middle|y\middle|\middle|x_1 + x_2 = y\right>_{ab}  $ (the group is $\mathbb{Z}^2$)
\item $\eta = \left<\middle|y\middle|\middle|y = 0\right>_{ab} $ (the trivial group)
\item $\Delta = \left<x\middle|y_1,y_2\middle|\middle|x=y_1=y_2\right>_{ab}$ (the group is $\mathbb{Z}$)
\item $\epsilon = \left<x\middle|\middle|\middle|\right>_{ab} $ (the group is $\mathbb{Z}$)
\end{itemize}

One could represent them diagrammatically like this:

\begin{tikzpicture}[baseline=(current bounding box.center)]
  \node (A) at (0,0) {$x$};
  \node (B) at (0,1) {$y$};
  \node (C) [style=bmonoid] at (1,0.5) {};
  \node (D)  at (2,0.5) {$x + y$};
  \draw (A) -- (C);
  \draw (B) -- (C);
  \draw (C) -- (D);
\end{tikzpicture}
\hfill
\begin{tikzpicture}[baseline=(current bounding box.center)]
  \node (A) [style=bmonoid] at (0,0.5) {};
  \node (B)  at (1,0.5) {$0$};
  \draw (A) -- (B);
\end{tikzpicture}  
\hfill
\begin{tikzpicture}[baseline=(current bounding box.center)]
  \node (A) at (0,0) {$x$};
  \node (B) at (0,1) {$x$};
  \node (C) [style=monoid] at (-1,0.5) {};
  \node (D)  at (-2,0.5) {$x$};
  \draw (A) -- (C);
  \draw (B) -- (C);
  \draw (C) -- (D);
\end{tikzpicture}
\hfill
\begin{tikzpicture}[baseline=(current bounding box.center)]
  \node (A) [style=monoid] at (0,0.5) {};
  \node (B)  at (-1,0.5) {$x$};
  \draw (A) -- (B);
\end{tikzpicture}

These are the same diagrams as in subsection~\ref{sec:monoids}, except now their meaning are different: every wire now bears a \emph{generator}. The first diagrammatic expression means that we have three generators, and there is an equation stating that the output generator is the sum of the input generators.

We can now look at flow equivalence using this bialgebra. Morphisms $0 \to 0$ are groups with no input/output generators and only internal generators, so they are just plain groups, upto isomorphism. We obtain immediately:

\begin{proposition}
  Let $M$ be a $n \times n$ matrix. The group $\left<x_1 \dots x_n \middle| Mx = x\right>_{ab}$ (upto isomorphism) is an invariant of flow equivalence.
In other words, the group $\mathbb{Z}^n / (M - I)\mathbb{Z}^n$ is an invariant of flow equivalence.
\end{proposition}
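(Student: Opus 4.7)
The plan is to instantiate the general invariant-construction recipe of this section in the traced prop $CoSpan(\mathbf{AbGrp})$ just introduced. Granting that the four morphisms $\mu,\eta,\Delta,\epsilon$ satisfy the ten equations of Definition~\ref{def:matricesintegers}, the universal property of the traced completion yields a traced prop functor $F:\widehat{\matrices{\mathbb{Z}_+}}\to CoSpan(\mathbf{AbGrp})$. By the corollary to Theorem~\ref{thm:z}, if $M$ and $N$ are flow equivalent then $\widetilde M=\widetilde N$ in $\widehat{\matrices{\mathbb{Z}_+}}$, and therefore $F(\widetilde M)=F(\widetilde N)$. A $0\to 0$ morphism in $CoSpan(\mathbf{AbGrp})$ carries no boundary generators, so it is simply an abelian group up to isomorphism; hence $F(\widetilde M)$ is the sought group-valued invariant.

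To identify this invariant, I would unfold the diagram of Figure~\ref{fig:matrixdiag} inside $CoSpan(\mathbf{AbGrp})$. Each wire carries a generator; a black (copy) diamond imposes equality between its input and each of its outputs, and a white (sum) diamond imposes that its output equals the sum of its inputs. Let $u_1,\dots,u_n$ name the generators on the $n$ traced loops (the wire joining the output of white diamond $i$ back to the input of black diamond $i$). The black-diamond relations force every wire leaving black diamond $i$ to carry $u_i$; collecting the inputs at white diamond $j$ gives the relation $u_j=\sum_i M_{ij}u_i$ (the multiplicity of $u_i$ there being the matrix entry). After eliminating the internal generators, the presentation reduces to $\langle u_1,\dots,u_n \mid Mu=u\rangle_{ab}$, which is $\mathbb{Z}^n/(M-I)\mathbb{Z}^n$ (up to the transpose-invariance of the Smith normal form, which identifies it with any equivalent presentation $\mathbb{Z}^n/(M^{T}-I)\mathbb{Z}^n$).

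The only genuine work sits in the granted hypothesis: verifying the ten bialgebra equations of Definition~\ref{def:matricesintegers} (and the five trace axioms of Definition~\ref{def:tracedprop}) in $CoSpan(\mathbf{AbGrp})$ with the proposed $\mu,\eta,\Delta,\epsilon$. Each check unfolds into a comparison of two cospan compositions, i.e.\ two abelian groups presented by the same boundary generators plus a short list of $\mathbb{Z}$-linear relations on internal generators; Gaussian elimination on the internal generators makes the two sides coincide by inspection. The only axiom whose content is not cosmetic is the bigebra equation, where both sides produce the cospan with boundary $(x_1,x_2)$ and $(y_1,y_2)$ subject to $y_1=y_2=x_1+x_2$. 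I expect no conceptual obstacle --- the whole argument is a direct application of the categorical machinery already in place, the only risk being bookkeeping.
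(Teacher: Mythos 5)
Your proposal is correct and follows essentially the same route as the paper: exhibit the bicommutative bialgebra in the traced cospan category of presented abelian groups, invoke the universal property of the traced completion together with Theorem~\ref{thm:z} to get a group-valued invariant of flow equivalence, and unfold the diagram of Figure~\ref{fig:matrixdiag} to identify the $0\to 0$ morphism as $\left<x_1\dots x_n\middle| Mx=x\right>_{ab}$ (the paper treats this last identification, and the verification of the bialgebra and trace axioms, as immediate). Your parenthetical about the transpose is a sensible precaution the paper does not bother with, and is harmless since Smith normal form makes $\mathbb{Z}^n/(M-I)\mathbb{Z}^n$ and $\mathbb{Z}^n/(M^{T}-I)\mathbb{Z}^n$ isomorphic.
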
  
This is the celebrated Bowen-Franks group \cite{BowenFranks}.

Now there is no reason to do this construction starting with a group, we can do the same with commutative monoids with no change, to obtain:
\begin{proposition}
  Let $M$ be a $n \times n$ matrix. The commutative monoid $\left<x_1 \dots x_n \middle| Mx = x\right>_{ab}$ is an invariant of flow equivalence.
\end{proposition}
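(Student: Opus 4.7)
The plan is to repeat verbatim the construction given just above for abelian groups, but replacing $\mathbf{AbGrp}$ by $\mathbf{CMon}$, the category of commutative monoids. Concretely, I would introduce a prop whose morphisms $n \to m$ are triples $(G,n,m)$ where $G$ is a commutative monoid with distinguished input generators $x_1,\dots,x_n$, output generators $y_1,\dots,y_m$, and internal generators $z_1,\dots,z_p$, modulo isomorphisms fixing $x_i$ and $y_j$. Composition is the pushout along identification of outputs of the first with inputs of the second, tensor product is direct sum (which is both the product and coproduct in $\mathbf{CMon}$), and the trace identifies the first input generator with the first output generator and turns them into internal generators.

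Next I would verify that the four bialgebra generators admit the same description as in the abelian-group case: the white diamond $\Delta=\langle x \mid y_1,y_2 \mid \mid x=y_1=y_2\rangle_{ab}$ and counit $\epsilon=\langle x\mid\mid\mid\rangle_{ab}$, the black diamond $\mu=\langle x_1,x_2\mid y\mid\mid x_1+x_2=y\rangle_{ab}$ and unit $\eta=\langle\mid y\mid\mid y=0\rangle_{ab}$. The ten defining equations of Definition~\ref{def:matricesintegers} amount to equalities of presentations of commutative monoids, which one checks exactly as for abelian groups: they all reduce to computing pushouts in $\mathbf{CMon}$, and the arguments go through because $\mathbf{CMon}$, like $\mathbf{AbGrp}$, is a semiadditive category with all pushouts. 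The same holds for the traced prop axioms (tightening, yanking, sliding, strength) since $\mathrm{Cospan}(\mathbf{CMon})$ is even compact closed.

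By the universal property of the traced completion $\widehat{\matrices{\mathbb{Z}_+}}$, the interpretation extends uniquely to a traced prop functor $F$ from $\widehat{\matrices{\mathbb{Z}_+}}$ into this prop. Now, the only $0\to 0$ morphisms of our prop are commutative monoids up to isomorphism (no input or output generators, only internal generators subject to some relations). Applying $F$ to $\tilde M = tr_n(\iota(M))$ and unfolding the diagram of Figure~\ref{fig:matrixdiag}, one sees that each of the $n$ black diamond cascades contributes a generator $x_i$ together with the relation $\sum_j M_{ij} x_j = x_i$ imposed by the trace loop and the white diamond that copies $x_i$ along its occurrences. Thus $F(\tilde M)$ is exactly the commutative monoid $\langle x_1,\dots,x_n \mid Mx = x\rangle_{ab}$.

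By Theorem~\ref{thm:z}, if $M$ and $N$ are flow equivalent then $\tilde M = \tilde N$ in $\widehat{\matrices{\mathbb{Z}_+}}$, hence $F(\tilde M) = F(\tilde N)$, which gives the desired isomorphism of commutative monoids. The main obstacle, as in the abelian-group case, is really just the bookkeeping check that the prop and bialgebra structure on $\mathrm{Cospan}(\mathbf{CMon})$ is well-defined; once that is settled, the conclusion is immediate from the universal property of the traced completion and from Theorem~\ref{thm:z}. A minor point worth highlighting is that one does not need inverses anywhere in the argument, which is precisely why passing from $\mathbf{AbGrp}$ to $\mathbf{CMon}$ creates no additional difficulty and produces a potentially finer invariant (since a commutative monoid remembers strictly more than its group completion).
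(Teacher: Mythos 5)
Your proposal is correct and follows essentially the same route as the paper, which at this point simply observes that the entire $\mathrm{Cospan}(\mathbf{AbGrp})$ construction goes through verbatim for commutative monoids since no inverses are ever used. Your additional remarks (that $\mathbf{CMon}$ has pushouts and biproducts, that the cospan prop is compact closed hence traced, and that the conclusion then follows from the universal property of the traced completion together with Theorem~\ref{thm:z}) are exactly the checks the paper leaves implicit.
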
  
Everything we did in this subsection was done informally by Hillman \cite{Hillman}, who also showed that the monoid typically does not contain more information than the group in relevant cases (Theorem 29 in \cite{Hillman}). For irreducible matrices, the commutative \emph{semigroup} $\left<x_1 \dots x_n \middle| Mx = x\right>_{ab}$ is actually a group, and therefore equal to the Bowen-Franks group.
The commutative \emph{monoid} is therefore just a group with an extraneous unit element.

Now the Bowen-Franks group is an invariant of flow equivalence, but if we want to find a finer invariant for strong shift equivalence, we need to find a nontrivial representation of the rectangle of Proposition~\ref{def:matricespolynomial}, i.e. a nontrivial morphism for $\mu$ (adding two generators) and $\Delta$ (duplicating a generator)

If we stay in the realm of groups and monoids, we probably won't obtain anything stronger than the previous group/monoid.
One way to go is using $\mathbb{Z}_+[t]$-semimodules \cite[chapter 14]{Golan}.
In this new setting, morphisms will not be monoids, but semimodules over $\mathbb{Z}_+[t]$. This is just a standard commutative monoid $M$ , equipped with an action of $\mathbb{Z}_+[t]$, denoted multiplicatively, which satifies the obvious axioms: $a(m+n) = am+an, (a+b)m = am+bm, (ab)m = a(bm), a0=0, 0m=m, 1m=m$ for $a,b \in \mathbb{Z}_+[t]$ and $m \in M$.

We can do the same theory as before for presentations of $\mathbb{Z}_+[t]$-semimodules. Now $\left< X | R \right>$ will represent the $\mathbb{Z}_+[t]$-semimodule with generators $X$ and equations $R$. Such a semimodule is for example $\left< x,y | t^2 x = 2ty + (t^2 + 3t)x, x + y = t \right>$.

Everything works exactly the same before, with the added benefit that we know have a nontrivial morphism for our bialgebra, as in Proposition~\ref{def:matricespolynomial}: multiplication by $t$:
\[ \left<x\middle|y\middle|\middle| y = tx\right>\]

Doing this  we obtain immediately:

\begin{proposition}
  Let $M$ be a $n \times n$ matrix. The $\mathbb{Z}_+[t]$-semimodule  $\left<x_1 \dots x_n \middle| tMx = x\right>$ is an invariant of strong shift equivalence.
\end{proposition}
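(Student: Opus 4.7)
The plan is to apply exactly the same template the paper develops in the preceding subsections (Monoids, Algebras, Groups and Cospans): realise $\mathbb{Z}_+[t]$-semimodules as a traced prop carrying a bicommutative bialgebra plus a distinguished morphism that squares with the bialgebra structure, then invoke the universal property of $\widehat{\matrices{\mathbb{Z}_+[t]}}$ together with Theorem~\ref{thm:zt}.

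First I would define the prop $\textbf{SMod}_t$ whose morphisms $n\to m$ are finitely presented $\mathbb{Z}_+[t]$-semimodules $\langle x_1,\dots,x_n\mid y_1,\dots,y_m\mid z_1,\dots,z_p\mid R\rangle$ with $n$ distinguished input generators and $m$ distinguished output generators, taken up to isomorphism fixing inputs and outputs. Composition identifies the outputs of the first with the inputs of the second and relabels them as internal generators; the tensor product is the direct sum; the trace identifies the first input generator with the first output generator and internalises them. The verification that $\textbf{SMod}_t$ is a traced prop is exactly parallel to the group/monoid case of the previous subsection, because the universal properties of colimits (pushouts, coequalisers) of $\mathbb{Z}_+[t]$-semimodules give associativity of composition, bifunctoriality, and the five traced-prop equations of Definition~\ref{def:tracedprop}; this is routine and I would not write it out in detail.

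Next I would exhibit the bialgebra-with-$h$ structure. Take
\[ \mu = \langle x_1,x_2\mid y\mid\mid y = x_1+x_2\rangle, \quad \eta = \langle\mid y\mid\mid y=0\rangle,\]
\[ \Delta = \langle x\mid y_1,y_2\mid\mid y_1 = x,\ y_2 = x\rangle, \quad \epsilon = \langle x\mid\mid\mid\rangle,\]
\[ h = \langle x\mid y\mid\mid y = tx\rangle.\]
Each equation in Definition~\ref{def:matricesintegers} and Proposition~\ref{def:matricespolynomial} becomes, after identifying shared boundary generators, an isomorphism of presentations of the same abelian semimodule; checking the bialgebra square and the four commutation rules for $h$ amounts to observing that in a commutative semimodule, $t(x_1+x_2)=tx_1+tx_2$, $t\cdot 0=0$, duplication commutes with multiplication-by-$t$, and discarding commutes with $h$. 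By the universal property of $\widehat{\matrices{\mathbb{Z}_+[t]}}$, this data uniquely determines a traced-prop functor $F:\widehat{\matrices{\mathbb{Z}_+[t]}}\to \textbf{SMod}_t$, and we put $\psi(M) = F(tr_n(\iota(tM)))$ for any $n\times n$ nonnegative integer matrix $M$.

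The main substantive step is then to compute $\psi(M)$ explicitly and identify it with $\langle x_1,\dots,x_n\mid tMx=x\rangle$. Reading Figure~\ref{fig:matrixdiag} in $\textbf{SMod}_t$: the $i$-th white diamond creates generators $y_i$ on each of its outgoing wires with $y_i=x_i$ on every branch (so all wires leaving the $i$-th white node carry the same generator $x_i$); the white rectangle replaces a wire carrying $u$ by a wire carrying $tu$; the $j$-th black diamond with incoming wires $w_1,\dots,w_k$ contributes the generator $v_j = \sum_\ell w_\ell$. The trace, which identifies the output of the $j$-th black diamond with the input of the $j$-th white diamond, imposes $v_j = x_j$. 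Substituting, the wire from the $i$-th white diamond into the $j$-th black diamond carries $tx_i$ and there are $a_{ij}$ such wires when $M=(a_{ij})$, so the total relation at $j$ is $x_j = \sum_i a_{ij}\, t x_i$, i.e.\ $x=tM^T x$. (Depending on the row/column convention chosen in the paper one obtains $tMx=x$; this is merely a matter of which side of the graph carries inputs.) After renaming internal generators, $\psi(M)\cong \langle x_1,\dots,x_n\mid tMx=x\rangle$.

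Finally, by the Corollary formalising Theorem~\ref{thm:zt}, $M$ and $N$ being strong shift equivalent means $[tM,n]=[tN,m]$ in $\widehat{\matrices{\mathbb{Z}_+[t]}}$; applying $F$ gives $\psi(M)=\psi(N)$ in $\textbf{SMod}_t$, which is exactly the claimed invariance up to isomorphism of $\mathbb{Z}_+[t]$-semimodules. The only place real work hides is the explicit trace-and-substitute bookkeeping in the previous paragraph, and the routine but lengthy check that colimits of $\mathbb{Z}_+[t]$-semimodules organise into a traced symmetric monoidal prop; I would expect the latter to be the main obstacle, since one must be careful that identifying generators in the composition does not accidentally quotient by more than the intended pushout (in particular, equality of morphisms is up to isomorphism fixing only the boundary generators, not the internal ones).
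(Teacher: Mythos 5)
Your proposal is correct and follows essentially the same route as the paper: the paper obtains this proposition by transporting its cospan construction from commutative monoids to $\mathbb{Z}_+[t]$-semimodules, taking multiplication by $t$ as the distinguished morphism $h$, and invoking Theorem~\ref{thm:zt} via the general $\psi(M)=F(tr_n(\iota(tM)))$ machinery. You have merely written out in detail the verifications that the paper compresses into ``everything works exactly the same as before.''
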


In the specific case of $M = \begin{pmatrix} 2 \end{pmatrix}$, this semimodule is $\left<t,x \middle| 2tx = x\right>$, which is the monoid $\mathbb{Z}_+[1/2]$ where $t$ ``acts'' by multiplication by $1/2$.

we can use the same construction with $\mathbb{Z}[t]$-modules instead of $\mathbb{Z}_+[t]$-semimodules. In that case, what we obtain is the cokernel of $I - tM$, which is well known to be equal to the dimension group~\cite{Krieger}. It suggests that the $\mathbb{Z}_+[t]$-semimodule is related to the positive part of the dimension group (seen as the ordered group), this should be examined more closely.

\section{Conclusion}

Using category theory, we have found a way to redescribe the strong shift equivalence problem in a purely categorical way: find identities satisfied in all traced bialgebras with a distinguished morphism.

As a consequence of this approach, we obtain a systematic way of obtaining new invariants for strong shift equivalence, and the hope to actually decide strong shift equivalence using these new invariants.

Here are a few natural questions:

\begin{itemize}
\item Is is sufficient to consider bialgebras (in the sense of linear algebra) of finite dimension to decide strong shift equivalence ? Or at the very least bialgebras that are finitely generated as bialgebras ? This might open a way to actually prove decidability of strong shift equivalence
\item In practice, one observe that many traced props are actually already compact closed. If we restrict ourselves to compact closed categories, do we obtain the same theory and identities ?
\item Is it possible to define a similar theory for subshifts of finite type in 2D?
\end{itemize}  
\section*{Acknowledgements}
E.J. wants to thank Mike Boyle for discussions about flow equivalence and positive equivalence.

\bibliographystyle{amsplain}

\providecommand{\bysame}{\leavevmode\hbox to3em{\hrulefill}\thinspace}
\providecommand{\MR}{\relax\ifhmode\unskip\space\fi MR }
\providecommand{\MRhref}[2]{%
  \href{http://www.ams.org/mathscinet-getitem?mr=#1}{#2}
}
\providecommand{\href}[2]{#2}

\clearpage
\appendix
\section{Proof of Proposition   \ref{prop:tracedcompletion}}
\label{app:proofalacon}
In this section, we prove that $Q$ is indeed a traced prop.

We will look at all axioms of Definition~\ref{def:prop} and Definition~\ref{def:tracedprop}.
\subsection{Preliminary remarks}

Before doing the proofs, we recall the equations of the equivalence relations, and give them names:


}$

The first two are true by a trivial use of $(A)$.
The third one is true by an application of $(B)$ then $(A)$ to permute the wires. The fourth one is true by an application of $(B)$.

The fifth equation is true by an application of $(D)$ and then $(A)$ to permute the wires.
The last equation is true by an application of $(B)$

\subsection{The tensor product is well defined}

Proofs about the tensor product are essentially easy and left to the
reader: The definition we take of the tensor product in the category
$Q$ only consists in placing the two morphisms side by side, and
exchanging the order of the input/outputs so that the dashed wires are
on top, which is an operation as innocuous as possible, as equation
$(A)$ makes us able to permute the dashed wires.

\subsection{The composition is associative}

This amount to show the following equation, which is obvious (the two terms are not only equivalent, but equal):

${

  }
We may then apply equation $(D'$).

\subsection{The composition has an neutral element}
This is obvious from the previous form.

\subsection{The symmetry}. The symmetry is the morphism $(\sigma, 0)$,
where $\sigma$ is the symmetry in the original category. It is easy to
see that it satisfies the axioms necessary for a prop using the
previous form for the composition. 

\subsection{The trace axioms (tightening, yanking, sliding, strength}

All axioms are easy to prove, using the alternate definition of the
composition from above. Yanking in particular is equation $(D)$ in the
particular case where $M$ is $id$. Sliding is essentially equation
$(A)$. Strength and Tightening  are essentially reordering of the
wires using equation $(A)$.

\end{document}